\newcommand{\zero}{\mathbf{0}}
\newcommand*\dotp{\mathpalette\dotp@{.5}}
\newcommand*\dotp@[2]{\mathbin{\vcenter{\hbox{\scalebox{#2}{$\m@th#1\bullet$}}}}}
\newtheorem{definition}{Definition}[section]
\newtheorem*{definition*}{Definition}
\newtheorem*{fact*}{Fact} 
\newtheorem{thm}{Theorem}
\newtheorem{lemma}[definition]{Lemma}
\newtheorem{proposition}[definition]{Proposition}
\newtheorem{assum}{Assumption}
\newtheorem*{theorem*}{Theorem}
\newtheorem*{lemma*}{Lemma}
\newtheorem*{proposition*}{Proposition}
\newtheorem{assumption*}{Assumption}
\newtheorem{condition*}{Condition}
\newtheorem{exercise*}{Exercise}
\newtheorem*{example*}{Example}
\icmltitlerunning{Submission and Formatting Instructions for ICML 2023}
\begin{document}
\onecolumn
\title{Generalized-Smooth Nonconvex Optimization is As Efficient As Smooth Nonconvex Optimization}
\icmlsetsymbol{equal}{*}

\author[1]{\textit{Ziyi Chen}}
\author[1]{\textit{Yi Zhou}}
\author[1]{\textit{Yingbin Liang}}
\author[3]{\textit{Zhaosong Lu}}

\affil[1]{Department of Electrical and Computer Engineering, University of Utah, Salt Lake City, UT, US}
\affil[2]{Department of Electrical and Computer Engineering, Ohio State University, Columbus, OH, US}
\affil[3]{Department of Industrial and Systems Engineering, University of Minnesota, Minneapolis, MN, US}

\affil[1]{\small {Email: \{u1276972,yi.zhou\}@utah.edu}}
\affil[2]{\small {Email: liang.889@osu.edu}}
\affil[3]{\small {Email: zhaosong@umn.edu}}
\maketitle


\doparttoc 
\faketableofcontents 

\begin{abstract}
Various optimal gradient-based algorithms have been developed for smooth nonconvex optimization. However, many nonconvex machine learning problems do not belong to the class of smooth functions and therefore the existing algorithms are sub-optimal. Instead, these problems have been shown to satisfy certain generalized-smooth conditions, which have not been well understood in the existing literature. In this paper, we propose a notion of $\alpha$-symmetric generalized-smoothness that extends the existing notions and covers many important functions such as high-order polynomials and exponential functions. We study the fundamental properties and establish descent lemmas for the functions in this class. Then, to solve such a large class of nonconvex problems, we design a special deterministic normalized gradient descent algorithm that achieves the optimal iteration complexity $\mathcal{O}(\epsilon^{-2})$, and also prove that the popular SPIDER variance reduction algorithm achieves the optimal sample complexity $\mathcal{O}(\epsilon^{-3})$ in the stochastic setting. Our results show that solving generalized-smooth nonconvex problems is as efficient as solving smooth nonconvex problems.
\end{abstract}

\section{Introduction}
In many modern machine learning applications, training machine learning model requires solving a nonconvex optimization problem with big data, for which many efficient gradient-based optimization algorithms have been developed, e.g., gradient descent (GD) \citep{carmon2020lower}, stochastic gradient descent (SGD) \citep{ghadimi2013stochastic} and many advanced stochastic variance reduction algorithms \citep{fang2018SPIDER,wang2019spiderboost}. In particular, the complexities of these algorithms have been extensively studied in nonconvex optimization. Specifically, under the standard assumption that the objective function is $L$-smooth (i.e., has Lipschitz continuous gradient), it has been shown that the basic GD algorithm \citep{carmon2020lower} and many advanced stochastic variance reduction algorithms  \citep{fang2018SPIDER,cutkosky2019momentum} achieve the complexity lower bounds of finding an approximate stationary point of deterministic nonconvex optimization and stochastic nonconvex optimization, respectively. \footnote{Deterministic and stochastic optimization problems are formulated respectively as $\min_{w} f(w)$ and $\min_{w} \mathbb{E}_{\xi\sim\mathbb{P}} f_{\xi}(w)$.}


Although the class of smooth nonconvex problems can be effectively solved by the above provably optimal algorithms, it does not include many important modern machine learning applications, e.g., distributionally robust optimization (DRO) \citep{jin2021non} and language model learning \citep{zhang2019gradient}, etc. Specifically, for the problems involved in these applications, they are not globally smooth but have been shown to satisfy certain generalized-smooth conditions, in which the smoothness parameters scale with the gradient norm in various ways (see the formal definitions in Section \ref{sec:smoothlits}). To solve these generalized-smooth-type nonconvex problems, the existing works have developed various gradient-based algorithms, but only with sub-optimal complexity results for stochastic optimization. Therefore, we are motivated to {\em systematically build a comprehensive understanding of generalized-smooth functions and develop algorithms with improved complexities}.

To achieve this overarching goal, we need to address several fundamental challenges. First, the existing generalized-smooth conditions are proposed for specific application examples. Therefore, they define relatively restricted classes of functions that do not cover many popular ones such as high-order polynomials and exponential functions. Thus, we are motivated to consider the following question.

\begin{itemize}[leftmargin=*,topsep=0mm,itemsep=1mm]
	\item {\em Q1: How to extend the existing notion of generalized-smoothness to cover a broad range of functions used in machine learning practice? What are the fundamental properties of the functions in this class?}
\end{itemize}

Second, for such an extended class of generalized-smooth problems, it is expected that first-order algorithms may generally suffer from higher computation complexity (as compared to solving smooth problems). On the other hand, it is unclear how to design first-order algorithms that can efficiently solve these more challenging problems. Therefore, we aim to answer the following question.

\begin{itemize}[leftmargin=*,topsep=0mm,itemsep=1mm]
	\item {\em Q2: Can first-order algorithms solve generalized-smooth nonconvex problems as efficiently as solving smooth nonconvex problems? In particular, what algorithms can achieve the optimal complexities?}
\end{itemize}

\subsection{Our Contribution}
In this paper, we provide comprehensive and affirmative answers to the aforementioned fundamental questions. Our contributions are summarized as follows. 
\begin{itemize}[leftmargin=*,topsep=0mm,itemsep=1mm]
	\item We propose a class of $\alpha$-symmetric generalized-smooth functions, denoted by $\mathbf{\mathcal{L}}_{\text{sym}}^*(\alpha)$, which we show strictly contains the popular class of $L$-smooth functions (i.e., functions with Lipschitz continuous gradient), the class of asymmetric generalized-smooth functions \citep{levy2020large,jin2021non} and the class of Hessian-based generalized-smooth functions \citep{zhang2019gradient} (see the definitions in Section \ref{sec:smoothlits}). In particular, we show that our proposed function class $\mathbf{\mathcal{L}}_{\text{sym}}^*(\alpha)$ includes a wide range of popular machine learning problems and functions used in practice, including distributionally robust optimization \citep{levy2020large,jin2021non}, objective function of language models \citep{zhang2019gradient}, high-order polynomials and exponential functions.
	
	\item We study the fundamental properties of functions in the class $\mathbf{\mathcal{L}}_{\text{sym}}^*(\alpha)$ and establish new decent lemmas for functions in $\mathbf{\mathcal{L}}_{\text{sym}}^*(\alpha)$ with different values of $\alpha$ (See Proposition \ref{prop:PS_equiv}). These technical tools play an important role later in designing new gradient-based algorithms and developing their corresponding convergence analysis. 
	
	\item We develop a $\beta$-normalized gradient descent (named $\beta$-GD) algorithm for solving nonconvex problems in $\mathcal{L}_{\text{sym}}^*(\alpha)$, which normalizes the gradient $\nabla f(w_t)$ with the factor $\|\nabla f(w_t)\|^\beta$ in each iteration. We show that $\beta$-GD finds an approximate stationary point $\mathbb{E}\|\nabla f(w)\|\le \epsilon$ with iteration complexity $\mathcal{O}(\epsilon^{-2})$ as long as $\alpha\le\beta\le1$, which matches the iteration complexity lower bound for deterministic smooth nonconvex optimization and hence is an optimal algorithm. On the other hand, we show that it may diverge when $0<\beta<\alpha$ is used. 
	
	\item For nonconvex stochastic optimization, we propose a class of expected $\alpha$-symmetric generalized-smooth functions, denoted by $\mathbb{E}\mathcal{L}_{\text{sym}}^*(\alpha)$, which substantially generalizes the popular class of expected smooth functions. Interestingly, we prove that the original SPIDER algorithm still achieves the optimal sample complexity $\mathcal{O}(\epsilon^{-3})$ for solving nonconvex stochastic problems in $\mathbb{E}\mathcal{L}_{\text{sym}}^*(\alpha)$. 
\end{itemize}

In summary, our work reveals that generalized-smooth nonconvex (stochastic) optimization is as efficient as smooth nonconvex (stochastic) optimization, and the optimal complexities can be achieved by $\beta$-GD (for deterministic case) and SPIDER (for stochastic case), respectively.

\subsection{Related Work}


\textbf{$L$-smooth Functions $\mathcal{L}$: } For deterministic nonconvex $L$-smooth problems, it is well-known that GD achieves the optimal iteration complexity $\mathcal{O}(\epsilon^{-2})$ \citep{carmon2020lower}. For stochastic nonconvex problems $f(w):=\mathbb{E}_{\xi\sim\mathbb{P}}f_{\xi}(w)$, SGD achieves $\mathcal{O}(\epsilon^{-4})$ sample complexity \citep{ghadimi2013stochastic} which has been proved optimal for first-order stochastic algorithms if only the population loss $f$ is $L$-smooth \citep{arjevani2022lower}. \cite{fang2018SPIDER}  proposed the first variance reduction algorithm named SPIDER that achieves the optimal sample complexity $\mathcal{O}(\epsilon^{-3})$ under the stronger expected smoothness assumption (see eq. \eqref{ESG} for its definition). At the same time, several other variance reduction algorithms have been developed for stochastic nonconvex optimization that achieve the optimal sample complexity. For example, SARAH \citep{nguyen2017sarah} and SpiderBoost  \citep{wang2019spiderboost} can be seen as unnormalized versions of SPIDER. STORM further improved the practical efficiency of these algorithms by using single-loop updates with adaptive learning rates \citep{cutkosky2019momentum}. \cite{zhou2020stochastic} proposed the SNVRG algorithm by adjusting the SVRG variance reduction technique \citep{johnson2013accelerating,reddi2016stochastic} using multiple nested reference points, which also converge to a second-order stationary point. 

\textbf{Hessian-based Generalized-smooth Functions $\mathcal{L}_{\text{H}}^*$: } \cite{zhang2019gradient} extended the $L$-smooth function class to a Hessian-based generalized-smooth function class $\mathcal{L}_{\text{H}}^*$ which allows the Lipschitz constant to linearly increase with the gradient norm (see Definition \ref{def:H}) and thus includes higher-order polynomials and many language models that are not $L$-smooth. For objective function on $\mathcal{L}_{\text{H}}^*$, \cite{zhang2019gradient} also proposed clipped GD and normalized GD which keep the optimal iteration complexity $\mathcal{O}(\epsilon^{-2})$, and proposed clipped SGD which also achieves sample complexity $\mathcal{O}(\epsilon^{-4})$. \cite{zhang2020improved} proposed a general framework for clipped GD/SGD with momentum acceleration and obtained the same complexities for both deterministic and stochastic optimization. \cite{zhao2021convergence} obtained sample complexity $\mathcal{O}(\epsilon^{-4})$ for normalized SGD with both small constant stepsize and diminishing stepsize. A contemporary work \cite{reisizadeh2023variance} reduced the sample complexity to $\mathcal{O}(\epsilon^{-3})$ by combining SPIDER variance reduction technique with gradient clipping.

\textbf{Asymmetric Generalized-Smooth Functions $\mathbf{\mathcal{L}}_{\text{asym}}^*$: } Variants of clipped/normalized GD and SGD have been proposed on the asymmetric generalized-smooth function class $\mathbf{\mathcal{L}}_{\text{asym}}^*$, which looks like a first-order variant of $\mathcal{L}_{\text{H}}^*$ (see Definition \ref{def:asym}). For example, \cite{jin2021non} applied mini-batch normalized SGD with momentum proposed by \citep{cutkosky2020momentum} to distributionally robust optimization problem which has been proved equivalent to minimizing a function in $\mathbf{\mathcal{L}}_{\text{asym}}^*$
\cite{levy2020large,jin2021non}, and also obtained sample complexity $\mathcal{O}(\epsilon^{-4})$. \cite{yang2022normalized} made normalized and clipped SGD differentially private by adding Gaussian noise. \cite{crawshawrobustness2022} proposed generalized signSGD with ADAM-type normalization and obtained sample complexity $\mathcal{O}(\epsilon^{-4})$ on a smaller coordinate-wise version of $\mathbf{\mathcal{L}}_{\text{asym}}^*$. 

\section{Existing Notions of Generalized-Smoothness}\label{sec:smoothlits}

The class of $L$-smooth functions, which we denote as $\mathcal{L}$, includes all continuously differentiable functions with Lipschitz continuous gradient. Specifically, for any $f\in \mathcal{L}$, there exists $L_0>0$ such that 
\begin{align}
	\|\nabla f(w')-\nabla f(w)\| \le L_0 \|w'-w\|, ~~\forall w, w' \in \mathbb{R}^d. \label{eq: smooth}
\end{align}

Many useful functions fall into this class, e.g., quadratic functions, logistic functions, etc. Nevertheless, $\mathcal{L}$ is a restricted function class that cannot efficiently model a broad class of functions, including higher-order polynomials, exponential functions, etc. For example, consider the one-dimensional polynomial function $f(x)=x^4$ in the range $x\in [-10, 10]$. According to \eqref{eq: smooth}, its smoothness parameter $L_0$ can be as large as $1200$, leading to an ill-conditioned problem that hinders optimization. 

To address this issue and provide a better model for optimization, previous works have introduced various notions of generalized-smoothness, which cover a broader class of functions that are used in machine learning applications. For example, distributionally robust optimization (DRO) is an important machine learning problem, and recently it has been proved that DRO can be reformulated as another problem whose objective function belongs to the following asymmetric generalized-smooth function class ($\mathbf{\mathcal{L}}_{\text{asym}}^*$) \citep{levy2020large,jin2021non}. 

\begin{definition}[$\mathbf{\mathcal{L}}_{\text{asym}}^*$ function class]\label{def:asym}
	The asymmetric generalized-smooth function class $\mathcal{L}_{\text{asym}}^*$ is the class of differentiable functions $f:\mathbb{R}^d\to\mathbb{R}$ that satisfy the following condition for all $w, w'\in \mathbb{R}^d$ and some constants $L_0, L_1>0$.
	\begin{align}
		&\|\nabla f(w')-\nabla f(w)\|\le \big( L_0  + L_1\|\nabla f(w')\|\big)\|w'-w\|. \label{asym}
	\end{align} 
\end{definition}
To elaborate, we name the above function class asymmetric generalized-smooth as the definition in \eqref{asym} takes an asymmetric form. In particular,
the smoothness parameter of the functions in $\mathcal{L}_{\text{asym}}^*$ scales with the gradient norm $\|\nabla f(w')\|$. This implies that the nonconvex problem can be ill-conditioned in the initial optimization stage when the gradient is relatively large. 

On the other hand, \cite{zhang2019gradient} showed that high-order polynomials and many language models belong to the following Hessian-based generalized-smooth function class $\mathbf{\mathcal{L}}_{\text{H}}^*$.

\begin{definition}[$\mathbf{\mathcal{L}}_{\text{H}}^*$ function class]\label{def:H}
	The Hessian-based generalized-smooth function class $\mathbf{\mathcal{L}}_{\text{H}}^*$ is the class of twice-differentiable functions $f:\mathbb{R}^d\to\mathbb{R}$ that satisfy the following condition for all $w\in \mathbb{R}^d$ and some constants $L_0, L_1>0$.
	\begin{align}
		&\|\nabla^2 f(w)\|\le L_0+L_1\|\nabla f(w)\|.\label{H}
	\end{align} 
\end{definition}

In addition to the above notions of generalized-smoothness, many other works have developed optimization algorithms for minimizing the class of higher-order smooth functions, i.e., functions with Lipschitz continuous higher-order gradients \citep{Nesterov2006,carmon2020lower,carmon2021lower}. However, the resulting algorithms usually require either computing higher-order gradients or solving higher-order subproblems, which are not suitable for machine learning applications with big data. In the following subsection, we propose a so-called $\alpha$-symmetric generalized-smooth function class, which we show substantially generalizes the existing generalized-smooth function classes and covers a wide range of functions used in many important machine learning applications. 

\section{The $\alpha$-Symmetric Generalized-Smooth Function Class}
We propose the following class of $\alpha$-symmetric generalized-smooth functions $\mathbf{\mathcal{L}}_{\text{sym}}^*(\alpha)$, which we show later covers the aforementioned generalized-smooth function classes and includes many important machine learning problems. Throughout the whole paper, we define $0^0=1$.


\begin{definition}[$\mathbf{\mathcal{L}}_{\text{sym}}^*(\alpha)$ function class]\label{def:P}
	For $\alpha \in [0,1]$, the $\alpha$-symmetric generalized-smooth function class $\mathbf{\mathcal{L}}_{\text{sym}}^*(\alpha)$ is the class of differentiable functions $f:\mathbb{R}^d\to\mathbb{R}$ that satisfy the following condition for all $w, w'\in \mathbb{R}^d$ and some constants $L_0, L_1>0$.
	\begin{align}
		&\|\nabla f(w')-\nabla f(w)\| \le \big(L_0+L_1\max_{\theta\in[0,1]}\|\nabla f(w_{\theta}(w,w'))\|^{\alpha}\big) \|w'-w\|,  \label{P}
	\end{align}
	where $w_{\theta}(w,w'):=\theta w'+(1-\theta)w$. 
\end{definition}
\textbf{Remark:} we use $w_{\theta}(w,w')$ to emphasize its dependence on $w, w'$. Later whenever $w, w'$ is given, we will use the abbreviation $w_{\theta}$.

It can be seen that the above function class $\mathbf{\mathcal{L}}_{\text{sym}}^*(\alpha)$ covers the aforementioned function classes $\mathcal{L}$ (corresponds to $L_1=0$) and $\mathbf{\mathcal{L}}_{\text{asym}}^*$ (with $L_1>0, \alpha=1$ and $\max_{\theta\in[0,1]}\|\nabla f(w_{\theta}(w,w'))\|$ being replaced with the smaller term $\|\nabla f(w)\|$). In particular, compared to the asymmetric generalized-smooth function class $\mathbf{\mathcal{L}}_{\text{asym}}^*$, our proposed function class $\mathbf{\mathcal{L}}_{\text{sym}}^*(\alpha)$ generalizes it in two aspects. First, $\mathbf{\mathcal{L}}_{\text{sym}}^*(\alpha)$ defines generalized-smoothness in a symmetric way with regard to the points $w$ and $w'$ since it considers the maximum gradient norm over the line segment $\{w_{\theta}: \theta\in [0,1]\}$. As a comparison, $\mathbf{\mathcal{L}}_{\text{asym}}^*$ defines generalized-smoothness in an asymmetric way. Second, $\mathbf{\mathcal{L}}_{\text{sym}}^*(\alpha)$ covers the functions whose smoothness parameter can scale polynomially as $\max_{\theta\in[0,1]}\|\nabla f(w_{\theta})\|^{\alpha}$, whereas $\mathbf{\mathcal{L}}_{\text{asym}}^*$ only considers the special case $\alpha=1$. 

Next, we show connections among all these generalized-smooth function classes, and prove that our proposed function class $\mathbf{\mathcal{L}}_{\text{sym}}^*(\alpha)$ is substantially bigger than others.

\begin{thm}[Function class comparison]\label{thm: func_class}
	The generalized-smooth function classes $\mathbf{\mathcal{L}}_{\text{asym}}^*$, $\mathbf{\mathcal{L}}_{\textsc{H}}^*$ and $\mathbf{\mathcal{L}}_{\text{sym}}^*(\alpha)$ satisfy the following properties.
	\begin{enumerate}[leftmargin=*,topsep=0mm,itemsep=1mm]
		\item $\mathcal{L}_{\text{asym}}^* \subset \mathcal{L}_{\text{sym}}^*(1)$; \label{item:asym}
		\item $\mathbf{\mathcal{L}}_{\textsc{H}}^* \subset \mathbf{\mathcal{L}}_{\text{sym}}^*(1)$. Moreover, they are equivalent when restricted to the set of twice-differentiable functions;\label{item:H}
		\item The polynomial function $f(w)=|w|^{\frac{2-\alpha}{1-\alpha}}, w\in\mathbb{R}, \alpha\in (0,1)$ satisfies $f\in\mathcal{L}_{\text{sym}}^*(\alpha)$. However, $f\not\in \mathcal{L}_{\text{sym}}^*(\widetilde{\alpha})$ for all $\widetilde{\alpha} \in (0,\alpha)$ and $f\not\in \mathcal{L}_{\text{asym}}^*$;\label{item:sym_poly}
		\item The exponential function $f(w)=e^{w}+e^{-w}, w\in\mathbb{R}$ satisfies $f\in\mathcal{L}_{\text{sym}}^*(1)$. However, $f\not\in \mathcal{L}_{\text{sym}}^*(\widetilde{\alpha})$ for all $\widetilde{\alpha} \in (0,1)$ and $f\not\in \mathcal{L}_{\text{asym}}^*$.\label{item:sym_exp}
	\end{enumerate}
\end{thm}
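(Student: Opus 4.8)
Part~\ref{item:asym} is immediate from the definitions: for $f\in\mathcal{L}_{\text{asym}}^*$ one has $\|\nabla f(w')\|=\|\nabla f(w_1(w,w'))\|\le\max_{\theta\in[0,1]}\|\nabla f(w_\theta(w,w'))\|$, so \eqref{asym} is a special case of \eqref{P} with $\alpha=1$ and the same $L_0,L_1$; the inclusion is strict because the function in Part~\ref{item:sym_exp} lies in $\mathcal{L}_{\text{sym}}^*(1)\setminus\mathcal{L}_{\text{asym}}^*$. For Part~\ref{item:H}, the forward inclusion $\mathcal{L}_{\text{H}}^*\subseteq\mathcal{L}_{\text{sym}}^*(1)$ will be proved by the fundamental theorem of calculus: write $\nabla f(w')-\nabla f(w)=\int_0^1\nabla^2 f(w_\theta)(w'-w)\,\dd\theta$ (legitimate since $\nabla f$, hence $\nabla^2 f$, is bounded on the compact segment), bound $\|\nabla^2 f(w_\theta)\|\le L_0+L_1\|\nabla f(w_\theta)\|\le L_0+L_1\max_{\vartheta\in[0,1]}\|\nabla f(w_\vartheta)\|$ under the integral sign, and integrate. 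For the reverse inclusion on twice-differentiable $f$, fix $w$ and a unit vector $v$, put $w'=w+tv$ with $t\downarrow 0$ in \eqref{P}, divide by $t$, and pass to the limit: continuity of $\nabla f$ forces $\max_{\theta\in[0,1]}\|\nabla f(w+\theta t v)\|\to\|\nabla f(w)\|$ while $t^{-1}\big(\nabla f(w+tv)-\nabla f(w)\big)\to\nabla^2 f(w)v$, giving $\|\nabla^2 f(w)v\|\le L_0+L_1\|\nabla f(w)\|$; taking the supremum over unit $v$ yields \eqref{H} with the same constants. (Strictness over all functions holds trivially because $\mathcal{L}\subseteq\mathcal{L}_{\text{sym}}^*(1)$ contains $C^1$ functions that are not twice differentiable.)

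For Part~\ref{item:sym_poly}, set $p:=\frac{2-\alpha}{1-\alpha}$ and record the two arithmetic facts that drive everything: $p>2$, and $p-1=\frac{1}{1-\alpha}$, hence $(p-1)\alpha=p-2$. The function $f(w)=|w|^p$ is $C^2$ with $f'(w)=p|w|^{p-2}w$ and $f''(w)=p(p-1)|w|^{p-2}\ge0$. The membership proof rests on two observations. First, convexity of $|\cdot|$ gives $\max_{\theta\in[0,1]}|w_\theta|=\max\{|w|,|w'|\}=:M$, and since $t\mapsto p\,t^{p-1}$ is increasing this yields $\max_{\theta}|f'(w_\theta)|^{\alpha}=(pM^{p-1})^{\alpha}=p^{\alpha}M^{(p-1)\alpha}=p^{\alpha}M^{p-2}$. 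Second, by the mean value theorem applied to $f'$, $|f'(w')-f'(w)|=f''(\xi)\,|w'-w|$ for some $\xi$ between $w$ and $w'$, and $|\xi|\le M$ gives $f''(\xi)\le p(p-1)M^{p-2}=p^{1-\alpha}(p-1)\cdot p^{\alpha}M^{p-2}$. Combining the two shows $f$ obeys \eqref{P} with $L_1=p^{1-\alpha}(p-1)$ and any $L_0>0$, so $f\in\mathcal{L}_{\text{sym}}^*(\alpha)$.

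For the non-membership claims in Part~\ref{item:sym_poly} I specialize the candidate inequalities to convenient points. Testing \eqref{P} with exponent $\widetilde\alpha$ at $w'=-w$ (so $w_\theta=(1-2\theta)w$ and $\max_\theta|w_\theta|=|w|$) and cancelling the factor $2|w|$ forces $p|w|^{p-2}\le L_0+L_1p^{\widetilde\alpha}|w|^{(p-1)\widetilde\alpha}$; since $(p-1)\widetilde\alpha<(p-1)\alpha=p-2$, the right-hand side is $o(|w|^{p-2})$ and the inequality fails for large $|w|$, hence $f\notin\mathcal{L}_{\text{sym}}^*(\widetilde\alpha)$. Testing \eqref{asym} at $w'=0$, where $\nabla f(w')=0$, forces $p|w|^{p-2}\le L_0$, impossible as $|w|\to\infty$ because $p>2$, hence $f\notin\mathcal{L}_{\text{asym}}^*$. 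Part~\ref{item:sym_exp} is handled in the same spirit: $f(w)=e^{w}+e^{-w}$ is smooth with $f'(w)=e^{w}-e^{-w}$ and $f''=f$, and from $f(w)^2-f'(w)^2=4$ we get $f(w)=\sqrt{4+f'(w)^2}\le 2+|f'(w)|$, i.e.\ $\|\nabla^2 f(w)\|\le 2+\|\nabla f(w)\|$, so $f\in\mathcal{L}_{\text{H}}^*\subset\mathcal{L}_{\text{sym}}^*(1)$ by Part~\ref{item:H}. For the negative claims, $|f'(w)|=2|\sinh w|$ is increasing in $|w|$, so along the segment from $w$ to $-w$ we have $\max_\theta|f'(w_\theta)|=|f'(w)|$; testing \eqref{P} with exponent $\widetilde\alpha$ at $w'=-w$ then forces $|f'(w)|/|w|\le L_0+L_1|f'(w)|^{\widetilde\alpha}$, and since $|f'(w)|\sim e^{w}$ the left side grows like $e^{w}/w$ while the right side grows like $e^{\widetilde\alpha w}$, a contradiction for large $w$ when $\widetilde\alpha<1$; and testing \eqref{asym} at $w'=0$ forces $|f'(w)|/|w|\le L_0$, again impossible since $|f'(w)|\sim e^{w}$.

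\textbf{Where the difficulty lies.} All the pieces are elementary once the candidate inequalities are specialized to the right points; the step requiring the most care is the exponent bookkeeping in Part~\ref{item:sym_poly}: pinning down the identity $(p-1)\alpha=p-2$ and checking that the maximum of the gradient norm over the segment is attained at an endpoint, which is precisely what lets the term $\max_\theta\|\nabla f(w_\theta)\|^{\alpha}$ and the mean-value remainder $f''(\xi)$ be matched with identical powers of $M$. (One could equivalently route the twice-differentiable cases through a Hessian--gradient characterization of $\mathcal{L}_{\text{sym}}^*(\alpha)$ along the lines of Proposition~\ref{prop:PS_equiv}, but the direct argument above is self-contained.)
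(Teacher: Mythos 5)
Your proof is correct and follows essentially the same route as the paper's: direct definitional comparison for item 1, the fundamental-theorem-of-calculus argument one way and a limiting difference-quotient argument the other way for item 2, and explicit derivative computations for membership plus growth-rate comparisons of \eqref{P}/\eqref{asym} at special test points for the non-membership claims in items 3 and 4. The only differences are cosmetic --- you test at $w'=-w$ where the paper tests at $w'=0$, your identity $f(w)^2-f'(w)^2=4$ gives a slightly cleaner Hessian bound for the exponential than the paper's case analysis, and for strictness in item 2 you should exhibit an explicit $C^1$ but not twice-differentiable member of $\mathcal{L}$ (the paper uses $f(w)=w|w|$) rather than merely asserting one exists.
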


\textbf{Remark:} The functions in items 3 \& 4 can be generalized to high-dimensional case $w\in\mathbb{R}^d$ by using $f(w)=\|w\|^{\frac{2-\alpha}{1-\alpha}}$ and $f(w)=e^{\|w\|}+e^{-\|w\|}$ respectively.

To elaborate, items 1 \& 2 show that a special case of our proposed $\alpha$-symmetric generalized-smooth function class $\mathcal{L}_{\text{sym}}^*(1)$ includes the other existing generalized-smooth function classes $\mathcal{L}_{\text{asym}}^*, \mathcal{L}_{\text{H}}^*$. In particular, when $f$ is restricted to be twice-differentiable, the class $\mathcal{L}_{\text{H}}^*$ is equivalent to $\mathcal{L}_{\text{sym}}^*(1)$. Moreover, items 3 \& 4 show that our proposed generalized-smooth function class $\mathcal{L}_{\text{sym}}^*(\alpha)$ includes a wide range of `fast-growing' functions, including high-order polynomials and even exponential functions, which are not included in $\mathcal{L}_{\text{asym}}^*$. In summary, our proposed $\alpha$-symmetric generalized-smooth function class $\mathcal{L}_{\text{sym}}^*(\alpha)$ extends the existing boundary of smooth functions in nonconvex optimization.


Next, for the functions in $\mathcal{L}_{\text{sym}}^*(\alpha)$, we establish various important technical tools that are leveraged later to develop efficient algorithms and their convergence analysis.

\begin{proposition}[Technical tools]\label{prop:PS_equiv}
	The function class $\mathcal{L}_{\text{sym}}^*(\alpha)$ can be equivalently defined as follows.
	\begin{enumerate}[leftmargin=*,topsep=0mm,itemsep=1mm]
		\item\label{item:df_bound_poly} For any $\alpha\in (0,1)$, function $f$ belongs to $\mathcal{L}_{\text{sym}}^*(\alpha)$ if and only if for any $w,w'\in\mathbb{R}^d$,
		\begin{align}
			&\|\nabla f(w')-\nabla f(w)\| \le\|w'-w\| \big(K_0+K_1\|\nabla f(w)\|^{\alpha}
			+K_2\|w'-w\|^{\frac{\alpha}{1-\alpha}}\big).\label{PS_poly_up}
		\end{align}
		where $K_0:=L_0\big(2^{\frac{\alpha^2}{1-\alpha}}+1\big)$, $K_1:=L_1\cdot 2^{\frac{\alpha^2}{1-\alpha}}\cdot 3^{\alpha}$, $K_2:=L_1^{\frac{1}{1-\alpha}}\cdot 2^{\frac{\alpha^2}{1-\alpha}}\cdot 3^{\alpha}(1-\alpha)^{\frac{\alpha}{1-\alpha}}$.
		\item\label{item:df_bound_exp} For $\alpha=1$, function $f$ belongs to $\mathcal{L}_{\text{sym}}^*(1)$ if and only if for any $w,w'\in\mathbb{R}^d$,
		\begin{align}
			&\|\nabla f(w')-\nabla f(w)\| \le \|w'-w\| \big(L_0+L_1\|\nabla f(w)\|\big)\exp\big(L_1\|w'-w\|\big).\label{PS_exp_up}
		\end{align}
	\end{enumerate}
	Consequently, the following descent lemmas hold.
	\begin{enumerate}[leftmargin=*,topsep=0mm,itemsep=1mm]
		\setcounter{enumi}{2}
		\item\label{item:fup_poly} If $f\in \mathcal{L}_{\text{sym}}^*(\alpha)$ for $\alpha\in (0,1)$, then for any $w,w'\in\mathbb{R}^d$,
		\begin{align}
			&f(w') \le f(w)+\nabla f(w)^{\top}(w'-w)+\frac{1}{2}\|w'-w\|^2\big(K_0+K_1\|\nabla f(w)\|^{\alpha}+2K_2\|w'-w\|^{\frac{\alpha}{1-\alpha}}\big).\label{PS_poly_fup}
		\end{align}
		\item\label{item:fup_exp} If $f\in \mathcal{L}_{\text{sym}}^*(1)$, then for any $w,w'\in\mathbb{R}^d$,
		\begin{align}
			&f(w')\le f(w)+\nabla f(w)^{\top}(w'-w)+\frac{1}{2}\|w'-w\|^2\big(L_0+L_1\|\nabla f(w)\|\big)\exp\big(L_1\|w'-w\|\big).\label{PS_exp_fup}
		\end{align}
	\end{enumerate}
\end{proposition}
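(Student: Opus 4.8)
The plan is to establish the two equivalences (items~\ref{item:df_bound_poly} and~\ref{item:df_bound_exp}) first, and then read off the descent lemmas (items~\ref{item:fup_poly} and~\ref{item:fup_exp}) by integrating $\nabla f$ along the segment $\theta\mapsto w_\theta$. Throughout, fix $w,w'\in\RR^d$ and abbreviate $D:=\|w'-w\|$, $a:=\|\nabla f(w)\|$, and $m:=\max_{\theta\in[0,1]}\|\nabla f(w_\theta)\|$; note that $[w,w_\theta]$ is a sub-segment of $[w,w']$, so the maximum of $\|\nabla f\|$ over $[w,w_\theta]$ is at most $m$. The only genuinely delicate part is the ``only if'' direction of the two characterizations, where one must control the ``global'' quantity $m$ by $a$ and $D$ while producing exactly the advertised constants; the ``if'' direction and the descent lemmas are then routine.

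\emph{``Only if''.} Assume $f\in\mathcal{L}_{\text{sym}}^*(\alpha)$. Applying \eqref{P} to the pair $(w,w_\theta)$ gives $\|\nabla f(w_\theta)-\nabla f(w)\|\le(L_0+L_1 m^\alpha)\theta D$, hence $\|\nabla f(w_\theta)\|\le a+(L_0+L_1 m^\alpha)D$ for every $\theta$, and taking the maximum over $\theta$,
\[
  m \le a + L_0 D + L_1 D\,m^\alpha .
\]
When $\alpha=1$ this is useless once $L_1 D\ge 1$, so instead one uses that $g(\theta):=\|\nabla f(w_\theta)\|$ is Lipschitz on $[0,1]$ (by \eqref{P} applied to pairs $w_{\theta_1},w_{\theta_2}$ on the segment), hence absolutely continuous with $|g'(\theta)|\le(L_0+L_1 g(\theta))D$ for a.e.\ $\theta$; Gr\"onwall's inequality then yields $L_0+L_1 g(\theta)\le(L_0+L_1 a)e^{L_1 D\theta}$, so $L_0+L_1 m\le(L_0+L_1 a)e^{L_1 D}$, and substituting into \eqref{P} gives \eqref{PS_exp_up}. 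When $\alpha\in(0,1)$ the same argument gives $|g'(\theta)|\le(L_0+L_1 g(\theta)^\alpha)D$ a.e.; truncating $g$ from below at $b:=(L_0/L_1)^{1/\alpha}$ (so that $L_0\le L_1 g^\alpha$ whenever $g\ge b$, and $L_1 b^\alpha=L_0$) turns this into $|(\max(g,b)^{1-\alpha})'|\le 2(1-\alpha)L_1 D$ a.e., which after integration gives a bound of the form $m^{1-\alpha}\lesssim a^{1-\alpha}+b^{1-\alpha}+(1-\alpha)L_1 D$; raising to the appropriate powers (using subadditivity and convexity of $t\mapsto t^{1-\alpha}$ and $t\mapsto t^\alpha$, together with $L_1 b^\alpha=L_0$) and substituting back into \eqref{P} produces \eqref{PS_poly_up} with the stated $K_0,K_1,K_2$. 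The bookkeeping of these constants is the main obstacle; a quicker but lossier route for $\alpha<1$ is to solve the displayed self-referential inequality directly by splitting on which of $a$, $L_0D$, $L_1D m^\alpha$ is largest (the last case giving $m\le(3L_1D)^{1/(1-\alpha)}$), at the cost of worse constants and an extra cross term to absorb.

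\emph{``If''.} Assume \eqref{PS_poly_up} (resp.\ \eqref{PS_exp_up}). Partition $[0,1]$ into $n$ equal pieces with endpoints $\theta_i=i/n$ and telescope $\nabla f(w')-\nabla f(w)=\sum_{i=0}^{n-1}\big(\nabla f(w_{\theta_{i+1}})-\nabla f(w_{\theta_i})\big)$. Applying the assumed inequality to each summand (with base point $w_{\theta_i}$ and $\|w_{\theta_{i+1}}-w_{\theta_i}\|=D/n$) and using $\|\nabla f(w_{\theta_i})\|\le m$ gives
\[
  \|\nabla f(w')-\nabla f(w)\| \le D\big(K_0+K_1 m^\alpha\big) + K_2\,D^{1/(1-\alpha)}\,n^{-\alpha/(1-\alpha)}
\]
for $\alpha\in(0,1)$, and $\le D(L_0+L_1 m)e^{L_1 D/n}$ for $\alpha=1$. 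Letting $n\to\infty$, the error term vanishes (resp.\ $e^{L_1 D/n}\to 1$), and what remains is precisely the defining inequality \eqref{P} of $\mathcal{L}_{\text{sym}}^*(\alpha)$, with constants $K_0,K_1$ (resp.\ $L_0,L_1$). This completes both equivalences.

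\emph{Descent lemmas.} Given $f\in\mathcal{L}_{\text{sym}}^*(\alpha)$, the characterizations just proved let us invoke \eqref{PS_poly_up} (resp.\ \eqref{PS_exp_up}). Since $\theta\mapsto f(w_\theta)$ is continuously differentiable with derivative $\nabla f(w_\theta)^\top(w'-w)$, the fundamental theorem of calculus and Cauchy--Schwarz give
\[
  f(w')-f(w)-\nabla f(w)^\top(w'-w) = \int_0^1\big(\nabla f(w_\theta)-\nabla f(w)\big)^{\top}(w'-w)\,\dd\theta \le D\int_0^1\|\nabla f(w_\theta)-\nabla f(w)\|\,\dd\theta .
\]
Bounding the integrand by \eqref{PS_poly_up} with $w_\theta-w=\theta(w'-w)$ and computing $\int_0^1\theta\,\dd\theta=\tfrac12$ and $\int_0^1\theta^{1+\frac{\alpha}{1-\alpha}}\dd\theta=\frac{1-\alpha}{2-\alpha}\le 1$ yields \eqref{PS_poly_fup}; bounding by \eqref{PS_exp_up} and using $e^{L_1\theta D}\le e^{L_1 D}$ yields \eqref{PS_exp_fup}.
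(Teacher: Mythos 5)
Your overall architecture matches the paper's: for the ``only if'' directions you derive a differential inequality along the segment and integrate it (a Gr\"onwall/Bihari argument) to control $\max_\theta\|\nabla f(w_\theta)\|$ by $\|\nabla f(w)\|$ and $\|w'-w\|$; for the ``if'' directions you partition the segment, telescope, and let $n\to\infty$ so the extra terms $K_2(D/n)^{\alpha/(1-\alpha)}$ and $e^{L_1D/n}-1$ vanish; and the descent lemmas follow by integrating $\big(\nabla f(w_\theta)-\nabla f(w)\big)^\top(w'-w)$. All of this is sound, and your observation that the naive fixed-point inequality $m\le a+L_0D+L_1Dm^\alpha$ is insufficient (for $\alpha=1$, and constant-wise for $\alpha<1$) is exactly the right diagnosis. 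The one stylistic difference worth noting: you differentiate $g(\theta)=\|\nabla f(w_\theta)\|$ directly, which requires invoking Lipschitz continuity and a.e.\ differentiability (you do justify this), whereas the paper works with the $C^1$ auxiliary function $H(\theta)=L_0\theta+L_1\int_0^{\theta}\|\nabla f(w_u)\|^{\alpha}\dd u$ from the integral characterization \eqref{int} (Lemma \ref{lemma:int}), which sidesteps the measure-theoretic point entirely; both routes work, and for $\alpha=1$ yours reproduces \eqref{PS_exp_up} with the exact stated constants.

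The one genuine issue is your claim that the truncation-at-$b$ device ``produces \eqref{PS_poly_up} with the stated $K_0,K_1,K_2$'' for $\alpha\in(0,1)$. Carrying your route to completion gives $m^{1-\alpha}\le a^{1-\alpha}+b^{1-\alpha}+2(1-\alpha)L_1D$ with $b=(L_0/L_1)^{1/\alpha}$, and then raising to the power $\alpha/(1-\alpha)$ via the three-term power-mean inequality yields $m^{\alpha}\le 3^{\alpha^2/(1-\alpha)}\big(a^{\alpha}+L_0/L_1+(2(1-\alpha)L_1D)^{\alpha/(1-\alpha)}\big)$, i.e.\ constants of the form $3^{\alpha^2/(1-\alpha)}L_1$ rather than the advertised $2^{\alpha^2/(1-\alpha)}3^{\alpha}L_1$. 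These are not uniformly smaller (for $\alpha$ near $1$ your factor is strictly larger), so the claim as stated is unsubstantiated: you obtain an inequality of the correct \emph{form}, which suffices for every downstream use of the proposition, but not the literal constants in the statement. The paper gets its particular constants by a different combination step, namely bounding $L_0+L_1\|w'-w\|^{\alpha}H^{\alpha}+L_1a^{\alpha}\le 3L_1\big(\tfrac13\|w'-w\|H+\tfrac13 a+\tfrac13(L_0/L_1)^{1/\alpha}\big)^{\alpha}$ \emph{before} integrating, and then a two-term Jensen step after integrating (producing the $2^{\alpha}$ and $3^{\alpha}$ factors separately). To close your proof you should either redo the statement with your own constants or adopt that pre-integration Jensen combination in place of the truncation.
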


\textbf{Technical Novelty.} Proving the above items 1 \& 2 turns out to be non-trivial and critical, because they directly imply the items 3 \& 4\footnote{Items 3 \& 4 of Proposition \ref{prop:PS_equiv} can be obtained by substituting items 1 \& 2 into the inequality that $f(w')-f(w)-\nabla f(w)^{\top}(w'-w)\le \int_0^1 |\nabla f(w_{\theta})-\nabla f(w)||w'-w|d\theta$}, which play an important role in the convergence analysis of the algorithms proposed later in this paper. Specifically, there are two major steps to prove the equivalent definitions in items 1 \& 2. First, we prove another equivalent definition, i.e., $f\in\mathcal{L}_{\text{sym}}^*(\alpha)$ if and only if for any $w,w\in\mathbb{R}^d$,
\begin{align}
	&\|\nabla f(w')-\nabla f(w)\|\le \Big(L_0+L_1\int_0^1\|\nabla f(w_{\theta})\|^{\alpha}d\theta\Big)\|w'-w\|. \label{eq: equiv}
\end{align}
Please refer to \eqref{int} in Lemma \ref{lemma:int} in Appendix \ref{supp:lemmas} for the details. To prove this, we uniformly divide the line segment between $w$ and $w'$ into $n$ pieces with the end points $\{w_{\theta}: \theta=\frac{k}{n}\}_{k=0}^n$. Then, we obtain the following bound.
\begin{align*}
	\|\nabla f(w')-\nabla f(w)\|&\le \sum_{k=0}^{n-1} \|\nabla f(w_{(k+1)/n})-\nabla f(w_{k/n})\| \le \|w'-w\| \sum_{k=0}^{n-1} \frac{1}{n}\max_{\theta\in[k/n,(k+1)/n]}h(\theta),
\end{align*}
where $w_{k/n}$ and $w_{(k+1)/n}$ denote $w_{\theta}$ with $\theta=k/n$ and $\theta=(k+1)/n$ respectively, and $h(\theta):=L_0+L_1\|\nabla f(w_{\theta})\|^{\alpha}$. 
As $n\to +\infty$, the summation in the above inequality converges to the desired integral $\int_0^1h(\theta)d\theta$. Second, to prove sufficiency, i.e., \eqref{eq: equiv} implies \eqref{PS_poly_up} \& \eqref{PS_exp_up}, we derive and solve an ordinary differential equation (ODE) of the function $H(\theta):=\int_0^{\theta}h(\theta')d\theta'$. This ODE is obtained by substituting $w'=w_{\theta}$ into the above equivalent definition \eqref{eq: equiv}. Then, to prove necessity, i.e., \eqref{PS_poly_up} \& \eqref{PS_exp_up} imply \eqref{eq: equiv}, we use a similar dividing technique so that averaging the terms $K_0+K_1\|\nabla f(w_{k/n})\|^{\alpha}$ and $L_0+L_1\|\nabla f(w_{k/n})\|$ over $k=0,1,\ldots,n-1$ yields the desired integral as $n\to+\infty$, while at the same time the other terms vanish as $\|w_{(k+1)/n}-w_{k/n}\|^{\frac{\alpha}{1-\alpha}}\to0$ and $\exp\big(L_1\|w_{(k+1)/n}-w_{k/n}\|\big)\to1$. 

Next, we present some nonconvex machine learning examples that belong to the proposed function class $\mathcal{L}_{\text{sym}}^*(\alpha)$.

\textbf{Example 1: Phase Retrieval.}  Phase retrieval is a classic nonconvex machine learning and signal processing problem that arises in X-ray
crystallography and coherent diffraction imaging applications \citep{Drenth1994,Miao1999}. 
In this problem, we aim to recover the structure of a molecular object from far-field diffraction intensity measurements when the object is illuminated by a source light. Mathematically, denote the underlying true object as 
$x\in \mathbb{R}^d$ and suppose we take $m$ intensity measurements, i.e., $y_r = |a_r^{\top}x|^2, r=1,2,...,m$ where $a_r\in\mathbb{R}^d$ and $\top$ denotes transpose. 
Then, phase retrieval proposes to recover the signal by solving the following nonconvex problem.
\begin{align}
	\min_{z\in \mathbb{R}^d} f(z):=\frac{1}{2m}\sum_{r=1}^m (y_r - |a_r^{\top}z|^2)^2.\label{obj_phase}
\end{align}
The above nonconvex objective function is a high-order polynomial in the high-dimensional space. Therefore, it does not belong to the $L$-smooth function class $\mathcal{L}$. In the following result, we formally prove that the above phase retrieval problem can be effectively modeled by our proposed function class $\mathcal{L}_{\text{sym}}^*(\alpha)$.

\begin{proposition}\label{thm_phase}
	The nonconvex phase retrieval objective function $f(z)$ in \eqref{obj_phase} belongs to $\mathcal{L}_{\text{sym}}^*(\frac{2}{3})$.
\end{proposition}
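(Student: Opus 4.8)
The plan is to use the polynomial structure of $f$ to reduce the claim to a pointwise Hessian bound $\|\nabla^2 f(z)\|\le L_0+L_1\|\nabla f(z)\|^{2/3}$, and then to integrate the Hessian along the segment $\{w_\theta\}$ exactly as in the proof of $\mathcal{L}_{\text{H}}^*\subset\mathcal{L}_{\text{sym}}^*(1)$ in \cref{thm: func_class}. Since $f$ in \eqref{obj_phase} is a degree-four polynomial it is $C^\infty$, with cubic gradient $\nabla f(z)=\frac{2}{m}\sum_{r=1}^m\big((a_r^\top z)^2-y_r\big)(a_r^\top z)\,a_r$ and quadratic Hessian $\nabla^2 f(z)=\frac{2}{m}\sum_{r=1}^m\big(3(a_r^\top z)^2-y_r\big)a_ra_r^\top$. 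From $\nabla f(w')-\nabla f(w)=\int_0^1\nabla^2 f(w_\theta)(w'-w)\,\dd\theta$ one gets $\|\nabla f(w')-\nabla f(w)\|\le\big(\max_{\theta\in[0,1]}\|\nabla^2 f(w_\theta)\|\big)\|w'-w\|$, so the pointwise Hessian bound, once proved for all $z$, immediately yields \eqref{P} with $\alpha=2/3$ after bounding the max of $L_0+L_1\|\nabla f(w_\theta)\|^{2/3}$ by $L_0+L_1\max_\theta\|\nabla f(w_\theta)\|^{2/3}$. (This exponent is exactly the one flagged in item~\ref{item:sym_poly} of \cref{thm: func_class}, since $\tfrac{2-\alpha}{1-\alpha}=4$ for $\alpha=2/3$ matches the degree of $f$.)

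To prove the Hessian bound, let $P$ be the orthogonal projection onto $V:=\mathrm{span}\{a_1,\dots,a_m\}$; because $a_r^\top z=a_r^\top(Pz)$, both $\nabla f$ and $\nabla^2 f$ depend on $z$ only through $Pz$. An easy triangle-inequality estimate gives the upper bound $\|\nabla^2 f(z)\|\le\frac{2}{m}\sum_r\big(3(a_r^\top z)^2+y_r\big)\|a_r\|^2\le A\|Pz\|^2+B$, where $A:=\frac{6}{m}\sum_r\|a_r\|^4$ and $B:=\frac{2}{m}\sum_r y_r\|a_r\|^2$. The crux is the matching lower bound $\|\nabla f(z)\|\gtrsim\|Pz\|^3$ for $\|Pz\|$ large. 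Split $\nabla f(z)=g(z)-h(z)$ with $g(z):=\frac{2}{m}\sum_r(a_r^\top z)^3a_r$ and $h(z):=\frac{2}{m}\sum_r y_r(a_r^\top z)a_r$, both lying in $V$. Then $\langle g(z),Pz\rangle=\langle g(z),z\rangle=\frac{2}{m}\sum_r(a_r^\top z)^4\ge\frac{2}{m^2}\big(\sum_r(a_r^\top z)^2\big)^2\ge\frac{2\mu^2}{m^2}\|Pz\|^4$ by Cauchy--Schwarz, with $\mu>0$ the smallest positive eigenvalue of $\sum_r a_ra_r^\top$ (positive because the $a_r$ span $V$). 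Hence $\|g(z)\|\ge\langle g(z),Pz\rangle/\|Pz\|\ge\frac{2\mu^2}{m^2}\|Pz\|^3$ whenever $Pz\ne0$, while $\|h(z)\|\le B\|Pz\|$, so $\|\nabla f(z)\|\ge\frac{2\mu^2}{m^2}\|Pz\|^3-B\|Pz\|$.

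Combining the two estimates: choose $\rho_0>0$ so that the cubic term dominates (e.g.\ $\rho_0=m\sqrt{B}/\mu$). For $\|Pz\|\ge\rho_0$ we obtain $\|\nabla f(z)\|\ge\frac{\mu^2}{m^2}\|Pz\|^3$, hence $\|Pz\|^2\le(m^2/\mu^2)^{2/3}\|\nabla f(z)\|^{2/3}$ and therefore $\|\nabla^2 f(z)\|\le A(m^2/\mu^2)^{2/3}\|\nabla f(z)\|^{2/3}+B$; for $\|Pz\|<\rho_0$ we have the trivial bound $\|\nabla^2 f(z)\|\le A\rho_0^2+B$. Taking $L_1:=A(m^2/\mu^2)^{2/3}$ and $L_0:=A\rho_0^2+B$ establishes $\|\nabla^2 f(z)\|\le L_0+L_1\|\nabla f(z)\|^{2/3}$ for every $z$, which by the reduction above finishes the proof.

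The main obstacle is the gradient lower bound. Two points need care: (i) $\|z\|$ can be arbitrarily larger than $\|Pz\|$ along directions in which $f$ is flat, so all radius-type quantities must be measured after projecting onto $V$; and (ii) one must lose no exponent when passing from $\sum_r(a_r^\top z)^4$ to $\|Pz\|^4$ and when choosing $\rho_0$ to absorb the linear remainder $B\|Pz\|$, since any slack there would degrade the final exponent $2/3$.
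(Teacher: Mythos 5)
Your proof is correct, but it takes a genuinely different route from the paper's. The paper proves the stronger per-sample statement $f_{\xi}\in\mathcal{L}_{\text{sym}}^*(\frac{2}{3})$ for each measurement: since $\nabla f_{\xi}(z)$ is a scalar multiple of the single vector $a_{\xi}$, its norm is explicitly $\big|(a_{\xi}^{\top}z)^3-y_{\xi}(a_{\xi}^{\top}z)\big|\,\|a_{\xi}\|$ up to a constant, and elementary scalar inequalities (subadditivity of $t\mapsto t^{2/3}$ plus Young) give the pointwise lower bound $\|\nabla f_{\xi}(z)\|^{2/3}\gtrsim (a_{\xi}^{\top}z)^2-|y_{\xi}|^{3/2}$; the gradient difference is then bounded directly, without any Hessian, after symmetrizing $\nabla f_{\xi}(z')-\nabla f_{\xi}(z)$ into a $(z'-z)$-term and a $(z'+z)$-term, with constants depending only on $\max_r\|a_r\|$ and $\max_r|y_r|$. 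You instead work with the aggregate $f$, reduce the claim to the pointwise Hessian bound $\|\nabla^2 f(z)\|\le L_0+L_1\|\nabla f(z)\|^{2/3}$ and integrate along the segment (the natural $\alpha=\tfrac{2}{3}$ analogue of the Hessian characterization in item 2 of Theorem \ref{thm: func_class}); the crux is your spectral lower bound $\|\nabla f(z)\|\ge \tfrac{2\mu^2}{m^2}\|Pz\|^3-B\|Pz\|$ via Cauchy--Schwarz and the smallest positive eigenvalue $\mu$ of $\sum_r a_ra_r^{\top}$, and your insistence on measuring everything through the projection $P$ onto $\mathrm{span}\{a_r\}$ is exactly the care needed for this to survive flat directions. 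The trade-offs: your argument is conceptually cleaner and more modular, but its constants degrade with the conditioning of the design through $\mu^{-1}$, whereas the paper's are conditioning-free; more importantly, the paper's per-sample version is precisely what is reused in Proposition \ref{prop:eg_avg} to place phase retrieval in $\mathbb{E}\mathcal{L}_{\text{sym}}^*(\frac{2}{3})$, which your aggregate argument does not deliver (though it is not needed for the present proposition).
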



\textbf{Example 2: Distributionally Robust Optimization.}
In many practical machine learning applications, there is usually a gap between training data distribution and test data distribution. Therefore, it is much desired to train a model that is robust to distribution shift. Distributionally robust optimization (DRO) is such a popular optimization framework for training robust models. Specifically, DRO aims to solve the following problem
\begin{align}
	\min_{x \in \mathcal{X}} f(x):=\sup _{Q} \big\{\mathbb{E}_{\xi \sim Q}[\ell_{\xi}(x)] - \lambda d_{\psi}(Q,P)\big\}, \label{DRO}
\end{align}
where the $\psi$-divergence term $\lambda d_{\psi}(Q,P)$ ($\lambda>0$) penalizes the distribution shift between the training data distribution $Q$ and the target distribution $P$, and it takes the form $d_{\psi}(Q,P):=\int\psi\big(\frac{dQ}{dP}\big)dP$. Under mild assumptions on the nonconvex sample loss function $\ell_{\xi}$ (e.g., smooth and bounded variance) and the divergence function $\psi$, the above DRO problem is proven to be equivalent to the following minimization problem \citep{levy2020large,jin2021non}.
\begin{align}
	\min_{x\in \mathcal{X}, \eta \in \mathbb{R}} L(x, \eta):= \lambda \mathbb{E}_{\xi \sim P} \psi^*\left(\frac{\ell_{\xi}(x)-\eta}{\lambda}\right)+\eta, \label{DRO2}
\end{align}
where $\psi^*$ denotes the convex conjugate function of $\psi$. In particular, the objective function $L(x, \eta)$ in the above equivalent form has been shown to belong to the function class $\mathcal{L}_{\text{asym}}^*$ \citep{jin2021non}. Therefore, by item 1 of Theorem \ref{thm: func_class}, we can make the following conclusion.




\begin{lemma}\label{thm:DRO}
	Regarding the equivalent form \eqref{DRO2} of the DRO problem \eqref{DRO}, its objective function $L$ belongs to the function class $\mathcal{L}_{\text{sym}}^*(1)$.
\end{lemma}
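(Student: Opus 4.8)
The plan is to obtain this as an immediate corollary of two facts that are already available. First, under the stated regularity assumptions on the sample loss $\ell_{\xi}$ (smoothness and bounded variance) and on the divergence generator $\psi$, the reformulation result of \cite{levy2020large,jin2021non} gives that the DRO problem \eqref{DRO} is equivalent to the minimization problem \eqref{DRO2} and, moreover, that the reformulated objective $L(x,\eta)$ in \eqref{DRO2} belongs to the asymmetric generalized-smooth class $\mathcal{L}_{\text{asym}}^*$. Second, item 1 of Theorem \ref{thm: func_class} states $\mathcal{L}_{\text{asym}}^* \subset \mathcal{L}_{\text{sym}}^*(1)$. Chaining these two inputs yields $L \in \mathcal{L}_{\text{sym}}^*(1)$, which is exactly the claim; so the proof is essentially one line once the two inputs are invoked.

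For completeness I would also recall why the inclusion $\mathcal{L}_{\text{asym}}^* \subset \mathcal{L}_{\text{sym}}^*(1)$ holds, since that is the only genuinely mathematical step. Let $f \in \mathcal{L}_{\text{asym}}^*$ with constants $L_0, L_1$, and fix $w, w' \in \mathbb{R}^d$. The endpoint $w'$ equals $w_1(w,w')$ and hence lies on the segment $\{w_{\theta}(w,w') : \theta \in [0,1]\}$, so $\|\nabla f(w')\| \le \max_{\theta\in[0,1]}\|\nabla f(w_{\theta}(w,w'))\|$. Substituting this bound into \eqref{asym} gives
\begin{align*}
\|\nabla f(w') - \nabla f(w)\|
&\le \big(L_0 + L_1 \|\nabla f(w')\|\big)\|w'-w\| \\
&\le \big(L_0 + L_1 \max_{\theta\in[0,1]}\|\nabla f(w_{\theta}(w,w'))\|\big)\|w'-w\|,
\end{align*}
which is exactly the defining inequality \eqref{P} with $\alpha = 1$ and the same constants $L_0, L_1$; hence $f \in \mathcal{L}_{\text{sym}}^*(1)$. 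Applying this with $f = L$ completes the argument.

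If one preferred a more self-contained route (which I would mention but not carry out in detail), one could re-derive $L \in \mathcal{L}_{\text{asym}}^*$ directly from the structure of \eqref{DRO2}: differentiate under the integral sign to express $\nabla_{(x,\eta)} L(x,\eta)$ in terms of $(\psi^*)'\big((\ell_{\xi}(x)-\eta)/\lambda\big)$, and then bound the variation of this gradient between two points using the smoothness of $\ell_{\xi}$ together with the regularity of $\psi^*$ (its lower-order derivatives being controlled, as holds for the standard $\chi^2$ and KL generators). This reconstructs the estimate of \cite{jin2021non}. The main point of care — rather than any real obstacle — is simply to confirm that the hypotheses under which \cite{jin2021non} establish $L \in \mathcal{L}_{\text{asym}}^*$ coincide with the ``mild assumptions'' referenced in the statement, so that the cited equivalence \eqref{DRO}$\Leftrightarrow$\eqref{DRO2} and the cited membership genuinely apply here; granting that, there is nothing further to prove.
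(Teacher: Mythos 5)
Your proposal is correct and matches the paper's own justification: the paper likewise obtains Lemma \ref{thm:DRO} by citing \cite{jin2021non} for $L\in\mathcal{L}_{\text{asym}}^*$ and then invoking the inclusion $\mathcal{L}_{\text{asym}}^*\subset\mathcal{L}_{\text{sym}}^*(1)$ from item 1 of Theorem \ref{thm: func_class}, whose proof is exactly your endpoint-on-the-segment observation. The ``self-contained route'' you sketch is also carried out in the paper's appendix (for the per-sample functions $L_{\xi}$, in the proof of Proposition \ref{prop:eg_avg}), so nothing is missing.
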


\section{Optimal Method for Solving Nonconvex Problems in $\mathcal{L}_{\text{sym}}^*(\alpha)$}
In this section, we develop an efficient and optimal deterministic gradient-based algorithm for minimizing nonconvex functions in $\mathcal{L}_{\text{sym}}^*(\alpha)$ and analyze its iteration complexity. 

The challenge for optimizing the functions in $\mathcal{L}_{\text{sym}}^*(\alpha)$ is that the generalized-smoothness parameter scales with 
$\max_{\theta\in[0,1]}\|\nabla f(w_{\theta})\|^{\alpha}$. To address this issue, we need to use a specialized gradient normalization technique, and this motivates us to consider the  
$\beta$-normalized gradient descent ($\beta$-GD) algorithm as shown in Algorithm \ref{algo: GD}.
To elaborate, $\beta$-GD simply normalizes the gradient update by the gradient norm term $\|\nabla f(w_t)\|^{\beta}$ for some $\beta\ge0$. Such a normalized update is closely related to some existing gradient-type algorithms, including the clipped GD algorithm that uses the normalization term $\max\{\|\nabla f(w_t)\|, C\}$ and the normalized GD that uses the normalization term $\|\nabla f(w_t)\|+C$ \citep{zhang2019gradient}, where $C>0$ is a certain constant. 
\begin{algorithm}
	\caption{$\beta$-Normalized GD}\label{algo: GD}
	{\bf Input:} Iteration number $T$, initialization $w_0$, learning rate $\gamma$, normalization parameter $\beta$.\\
	\For{ $t=0,1, 2,\ldots, T-1$}{ 
		Update $w_{t+1}=w_t-\gamma\frac{\nabla f(w_t)}{\|\nabla f(w_t)\|^{\beta}}$.
	}
	\textbf{Output:} $w_{\widetilde{T}}$ where $\widetilde{T}$ is sampled from $\{0,1,\ldots,T-1\}$ uniformly at random.
\end{algorithm}
We obtain the following convergence result of $\beta$-GD on minimizing functions in $\mathcal{L}_{\text{sym}}^*(\alpha)$. 
\begin{thm}[Convergence of $\beta$-GD]\label{thm:GDconv}
	Apply the $\beta$-GD algorithm to minimize any function $f\in\mathcal{L}_{\text{sym}}^*(\alpha)$ with $\beta\in[\alpha,1]$. Choose $\gamma=\frac{\epsilon^{\beta}}{12(K_0+K_1+2K_2)+1}$\footnote{See the definition of $K_0,K_1,K_2$ in Proposition \ref{prop:PS_equiv}.} if $\alpha\in(0,1)$ and $\gamma=\frac{\epsilon^{\beta}}{4L_0+1}$ if $\alpha=1$  ($\epsilon$ is the target accuracy). Then, the following convergence rate result holds. 
	\begin{align}
		&\mathbb{E}_{\widetilde{T}}\|\nabla f(w_{\widetilde{T}})\|\le\Big(\frac{2}{T\gamma}\Big)^{\frac{1}{2-\beta}}\big(f(w_0)-f^*\big)^{\frac{1}{2-\beta}}+\frac{1}{2}\epsilon.\label{GD_rate}
	\end{align}
	Consequently, to achieve $\mathbb{E}_{\widetilde{T}}\|\nabla f(w_{\widetilde{T}})\|\le \epsilon$, the required overall iteration complexity is $T=\frac{4}{\gamma\epsilon^{2-\beta}}=\mathcal{O}(\epsilon^{-2})$.
\end{thm}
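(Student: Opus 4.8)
The plan is to run the standard descent-lemma argument, but using the generalized descent lemmas from Proposition \ref{prop:PS_equiv} and carefully controlling the extra gradient-dependent terms via the normalization factor $\|\nabla f(w_t)\|^\beta$. Fix an iteration $t$ and write $g_t := \nabla f(w_t)$, $s_t := w_{t+1}-w_t = -\gamma g_t/\|g_t\|^\beta$, so that $\|s_t\| = \gamma \|g_t\|^{1-\beta}$ and $g_t^\top s_t = -\gamma\|g_t\|^{2-\beta}$. First I would apply the descent lemma \eqref{PS_poly_fup} (for $\alpha\in(0,1)$) or \eqref{PS_exp_fup} (for $\alpha=1$) with $w=w_t$, $w'=w_{t+1}$. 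This gives
\begin{align*}
	f(w_{t+1}) \le f(w_t) - \gamma\|g_t\|^{2-\beta} + \tfrac{1}{2}\gamma^2\|g_t\|^{2-2\beta}\big(K_0 + K_1\|g_t\|^\alpha + 2K_2(\gamma\|g_t\|^{1-\beta})^{\frac{\alpha}{1-\alpha}}\big)
\end{align*}
in the polynomial case, and analogously with the $\exp(L_1\gamma\|g_t\|^{1-\beta})$ factor in the $\alpha=1$ case.

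The key structural observation is the exponent bookkeeping: since $\beta\ge\alpha$, in the term $\|g_t\|^{2-2\beta}\cdot\|g_t\|^\alpha = \|g_t\|^{2-2\beta+\alpha}$ we have $2-2\beta+\alpha \le 2-\beta$, so this term is dominated by the descent term $\|g_t\|^{2-\beta}$ up to the constant $\gamma$; similarly the $K_0$ term has exponent $2-2\beta \le 2-\beta$ when $\beta\le 1$, and the $K_2$ term carries exponent $2-2\beta + \frac{\alpha(1-\beta)}{1-\alpha}$, which one checks is also $\le 2-\beta$ precisely when $\beta\ge\alpha$ (this is where $\beta\ge\alpha$ is genuinely used, and also where $0<\beta<\alpha$ would break — matching the divergence claim). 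I would then split into the two regimes $\|g_t\|\le\epsilon$ and $\|g_t\|>\epsilon$. In the first regime the iterate already has small gradient so we only need to track the size; in the second, $\|g_t\|^{2-\beta} \le \|g_t\|^{2-\beta}$ trivially but for each dominated term with exponent $p \le 2-\beta$ we use $\|g_t\|^p = \|g_t\|^{2-\beta}\cdot\|g_t\|^{p-(2-\beta)}$ — no wait, that goes the wrong way; instead, since on the relevant events I'll want a lower bound on $\|g_t\|^{2-\beta}$, the cleaner route is: the choice $\gamma = \epsilon^\beta/(12(K_0+K_1+2K_2)+1)$ makes $\tfrac12\gamma(K_0+K_1\epsilon^\alpha+\ldots)$ small, and for $\|g_t\|\le$ (some bound ensured along the trajectory, or by treating $\|g_t\|^{1-\beta}\|g_t\|^{\cdots}$ carefully with the factor structure) the bracketed correction is at most $\tfrac12\|g_t\|^{2-\beta}$, yielding
\begin{align*}
	f(w_{t+1}) \le f(w_t) - \tfrac{1}{2}\gamma\|g_t\|^{2-\beta} + (\text{a term of order } \gamma\epsilon^{2-\beta}).
\end{align*}

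Next I would telescope over $t=0,\ldots,T-1$, using $f(w_T)\ge f^*$, to get $\frac{\gamma}{2}\sum_{t=0}^{T-1}\|g_t\|^{2-\beta} \le f(w_0)-f^* + \frac{T}{2}(\text{order }\gamma\epsilon^{2-\beta})$, hence $\frac{1}{T}\sum_{t} \|g_t\|^{2-\beta} \le \frac{2(f(w_0)-f^*)}{T\gamma} + O(\epsilon^{2-\beta})$. Then Jensen's inequality applied to the concave map $x\mapsto x^{\frac{1}{2-\beta}}$ (valid since $2-\beta\ge 1$) converts the average of $\|g_t\|^{2-\beta}$ into a bound on $\mathbb{E}_{\widetilde T}\|\nabla f(w_{\widetilde T})\|$, and subadditivity of $x\mapsto x^{1/(2-\beta)}$ splits the two summands, producing exactly \eqref{GD_rate}: $\mathbb{E}_{\widetilde T}\|\nabla f(w_{\widetilde T})\| \le (2/(T\gamma))^{1/(2-\beta)}(f(w_0)-f^*)^{1/(2-\beta)} + \tfrac12\epsilon$. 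Finally, setting the first term $\le\tfrac12\epsilon$ gives $T\gamma \ge 2(f(w_0)-f^*)(\epsilon/2)^{-(2-\beta)}$, i.e. $T = \Theta(1/(\gamma\epsilon^{2-\beta}))$, and substituting $\gamma = \Theta(\epsilon^\beta)$ yields $T = \mathcal{O}(\epsilon^{-\beta-2+\beta}) = \mathcal{O}(\epsilon^{-2})$.

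The main obstacle I anticipate is the middle step: verifying uniformly (for all $\|g_t\|\ge\epsilon$, or more precisely for the gradient values actually realized along the trajectory) that the correction bracket $\tfrac12\gamma^2\|g_t\|^{2-2\beta}(K_0+K_1\|g_t\|^\alpha+2K_2(\gamma\|g_t\|^{1-\beta})^{\alpha/(1-\alpha)})$ is absorbed into $\tfrac12\gamma\|g_t\|^{2-\beta} + O(\gamma\epsilon^{2-\beta})$. This requires the exponent inequality $2-2\beta + \frac{\alpha(1-\beta)}{1-\alpha} \le 2-\beta \iff \beta\ge\alpha$ together with a careful case split on whether $\|g_t\|\lessgtr\epsilon$ (and possibly $\|g_t\|\lessgtr 1$) so that each monomial $\gamma\|g_t\|^{p}$ with $p\le 2-\beta$ is bounded either by $\|g_t\|^{2-\beta}$ times a small constant (when $\|g_t\|$ is not too small and $\gamma$ small) or by $\epsilon^{2-\beta}$ times a constant (when $\|g_t\|$ is small). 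The $\alpha=1$ case is similar but one must additionally bound $\exp(L_1\gamma\|g_t\|^{1-\beta}) = \exp(L_1\gamma)$ (since $\beta=1$ is forced there, as $\alpha\le\beta\le 1$), which is just a constant $\le e^{L_1\epsilon/(4L_0+1)}$, absorbed into the constants — so that case is actually easier. I don't expect the telescoping, Jensen, or final complexity arithmetic to pose difficulties.
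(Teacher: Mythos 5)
Your proposal is correct and follows essentially the same route as the paper: apply the descent lemmas \eqref{PS_poly_fup}/\eqref{PS_exp_fup} with the $\beta$-normalized update, absorb the correction monomials $\gamma\|\nabla f(w_t)\|^{p}$ with $p\le 2-\beta$ (which is exactly where $\beta\ge\alpha$ enters, as you note) into $-\tfrac{\gamma}{2}\|\nabla f(w_t)\|^{2-\beta}+O(\gamma\epsilon^{2-\beta})$, then telescope and apply Jensen/Lyapunov to the concave power $x\mapsto x^{1/(2-\beta)}$. The only cosmetic difference is that the paper carries out your ``absorption'' step via a clean Young's-inequality lemma ($Cx^{\omega}\le x^{\omega'}+C^{\omega'/\Delta}$ for $C\in[0,1]$, $\Delta\ge\omega'-\omega$) rather than a case split on $\|\nabla f(w_t)\|\lessgtr\epsilon$; both yield the same residual order.
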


Theorem \ref{thm:GDconv} shows that $\beta$-GD achieves the iteration complexity $\mathcal{O}(\epsilon^{-2})$ when minimizing functions in $\mathcal{L}_{\text{sym}}^*(\alpha)$. Such a complexity result matches the iteration complexity lower bound for deterministic smooth nonconvex optimization and hence is optimal. 
In particular, Theorem \ref{thm:GDconv} shows that to minimize any function $f\in\mathcal{L}_{\text{sym}}^*(\alpha)$, it suffices to apply $\beta$-GD with any $\beta\in[\alpha,1]$ and a proper learning rate $\gamma=\mathcal{O}(\epsilon^\beta)$. Intuitively, with a larger $\alpha$, the gradient norm of function $f$ in the class $\mathcal{L}_{\text{sym}}^*(\alpha)$ increases faster as $\|w\|\to +\infty$, and therefore we need to use a larger normalization parameter $\beta$ and a smaller learning rate $\mathcal{O}(\epsilon^\beta)$ to alleviate gradient explosion. Interestingly, the convergence and iteration complexity of $\beta$-GD remain the same as long as $\beta\ge \alpha$ is used, i.e., over-normalization does not affect the complexity order. In practice, when $\alpha$ is unknown a priori for the function class $\mathcal{L}_{\text{sym}}^*(\alpha)$, one can simply use the conservative choice $\beta = 1$ and is guaranteed to converge.

\textbf{Technical Novelty.} In the proof of Theorem \ref{thm:GDconv}, a major challenge is that due to the $\beta$-normalization term in Algorithm \ref{algo: GD}, the generalized-smoothness of functions in the class $\mathcal{L}_{\text{sym}}^*(\alpha)$ introduces additional higher-order terms to the Taylor expansion upper bounds, as can be seen from the descent lemmas shown in \eqref{PS_poly_fup} (for $\alpha\in(0,1)$) and \eqref{PS_exp_fup} (for $\alpha=1$). In the convergence proof, these terms contribute to certain fast-increasing 
terms that reduce the overall optimization progress. For example, when $\alpha\in(0,1)$, substituting $w=w_t$ and $w'=w_{t+1}$ into \eqref{PS_poly_fup} yields that \footnote{See (i) of \eqref{fdec_poly} in Appendix \ref{sec:GDProof} for the full expression of $\mathcal{O}$ in eq. \eqref{dfo}.}
\begin{align}
	f(w_{t+1})- f(w_t)\le&\gamma\|\nabla f(w_t)\|^{2-\beta}+\frac{\gamma}{6}\Big(\mathcal{O}(\gamma)\|\nabla f(w_t)\|^{2-2\beta}\nonumber\\
	&+\mathcal{O}(\gamma)\|\nabla f(w_t)\|^{2+\alpha-2\beta}+\mathcal{O}(\gamma^{\frac{1}{1-\alpha}})\|\nabla f(w_t)\|^{\frac{(2-\alpha)(1-\beta)}{1-\alpha}}\Big).\label{dfo}
\end{align}
The above key inequality bounds the optimization progress $f(w_{t+1})-f(w_t)$ using gradient norm terms with very different exponents. This makes it challenging to achieve the desired level of optimization progress, as compared with the analysis of minimizing other (generalized) smooth functions in $\mathcal{L}$, $\mathcal{L}_{\text{asym}}^*$ and $\mathcal{L}_{\text{H}}^*$ \cite{zhang2019gradient,jin2021non}. To address this issue and homogenize the diverse exponents, we develop a technical tool in Lemma \ref{lemma:young} in Appendix \ref{supp:lemmas} to bridge polynomials with different exponents. With this technique, we further obtain the following optimization progress bound 
\begin{align}
	f(w_{t+1})-f(w_t)\le-\frac{\gamma}{2}\|\nabla f(w_t)\|^{2-\beta}+\mathcal{O}(\gamma^{\frac{2}{\beta}}),    
\end{align}
which leads to the desired result with proper telescoping.

We also obtain the following complementary result to Theorem \ref{thm:GDconv}, which shows that $\beta$-GD may diverge in general with under-normalization.

\begin{thm}(Divergence of $\beta$-GD)\label{thm: GD-diverge}
	For the $\beta$-GD algorithm with $\beta\in[0,\alpha)$, there always exists a convex function $f\in\mathcal{L}_{\text{sym}}^*(\alpha)$ with a unique minimizer such that for any learning rate $\gamma>0$, $\beta$-GD diverges for all initialization $\|w_0\|>C$ for some constant $C>0$. 
\end{thm}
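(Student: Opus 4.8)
The plan is to make the divergence explicit on the borderline examples from Theorem~\ref{thm: func_class}. For $\alpha\in(0,1)$ take the one-dimensional polynomial $f(x)=|x|^{p}$ with $p=\frac{2-\alpha}{1-\alpha}>2$, and for $\alpha=1$ take $f(x)=e^{x}+e^{-x}=2\cosh x$. Items~3 and~4 of Theorem~\ref{thm: func_class} give $f\in\mathcal{L}_{\text{sym}}^{*}(\alpha)$ in the respective cases; each is differentiable on $\mathbb{R}$, convex (a power $p>1$ of $|x|$ in the first case, a function with positive second derivative in the second), and has $0$ as its unique minimizer, with $\nabla f$ vanishing only there. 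Thus $\beta$-GD is well defined as long as the iterate stays nonzero.

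The first step is to collapse the dynamics to a scalar recursion. Let $\psi(r)$ denote $\|\nabla f\|$ evaluated at a point of norm $r$; a direct computation gives $\psi(r)=p\,r^{\frac{1}{1-\alpha}}$ in the polynomial case and $\psi(r)=2\sinh r$ in the exponential case. In one dimension $\nabla f(x)=\mathrm{sign}(x)\,\psi(|x|)$, so the update $x_{t+1}=x_{t}-\gamma\,\nabla f(x_t)/\|\nabla f(x_t)\|^{\beta}$ simplifies to $x_{t+1}=x_{t}-\gamma\,\mathrm{sign}(x_t)\,\psi(|x_t|)^{1-\beta}$, and hence, with $r_{t}:=|x_{t}|$,
\[
r_{t+1}=\bigl|\,r_{t}-\gamma\,\psi(r_{t})^{1-\beta}\,\bigr|.
\]
The same recursion holds for $r_t=\|w_t\|$ in the $d$-dimensional versions $f(w)=\|w\|^{p}$ and $f(w)=e^{\|w\|}+e^{-\|w\|}$, since these functions are radial and therefore each iterate is a scalar multiple of $w_0$.

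The heart of the argument is that $\psi(r)^{1-\beta}$ grows \emph{strictly faster than linearly} in $r$. In the polynomial case $\psi(r)^{1-\beta}=p^{1-\beta}r^{\frac{1-\beta}{1-\alpha}}$, and the exponent $\frac{1-\beta}{1-\alpha}$ exceeds $1$ precisely because $\beta<\alpha<1$; in the exponential case $\psi(r)^{1-\beta}$ grows like $e^{(1-\beta)r}$ with $1-\beta>0$. In either case $\psi(r)^{1-\beta}/r\to\infty$ as $r\to\infty$, so for any fixed $\gamma>0$ there is a constant $C=C(\gamma)>0$ with $\gamma\,\psi(r)^{1-\beta}\ge 3r$ for all $r\ge C$. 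I would then argue by induction that $r_{0}>C$ forces $r_{t+1}\ge 2r_{t}$ for all $t$: if $r_t>C$ then $\gamma\psi(r_t)^{1-\beta}\ge 3r_t$, hence $r_{t}-\gamma\psi(r_t)^{1-\beta}\le-2r_t<0$ and $r_{t+1}=\gamma\psi(r_t)^{1-\beta}-r_t\ge 2r_t$, and in particular $r_{t+1}>2C>C$, so the hypothesis is preserved. Consequently $r_{t}\ge 2^{t}r_{0}\to\infty$, i.e.\ $\beta$-GD diverges from every $w_0$ with $\|w_0\|>C$, while the function $f$ itself is fixed independently of $\gamma$ — matching the quantifier structure of the statement.

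The routine calculations aside, the points deserving care are: (i) that $\nabla f$ does not vanish along the trajectory, so the normalization never divides by zero — immediate since $r_t>C>0$ for all $t$; (ii) the quantifier order, namely that the threshold $C$ may depend on $\gamma$ (for small $\gamma$ the overshoot only begins farther out); and (iii) in the multidimensional formulation, convexity (and smoothness) of $e^{\|w\|}+e^{-\|w\|}$, which I would obtain by writing it as $2\sum_{k\ge 0}\|w\|^{2k}/(2k)!=h(\|w\|^{2})$ with $h(s)=2\sum_{k\ge0}s^{k}/(2k)!$ convex increasing, composed with the convex map $w\mapsto\|w\|^{2}$. I do not expect a genuine obstacle beyond bookkeeping; the real content is the superlinear growth of $\psi(r)^{1-\beta}$ when $\beta<\alpha$, which is exactly the property that fails once $\beta\ge\alpha$, and hence the reason the threshold $\beta=\alpha$ in Theorem~\ref{thm:GDconv} is sharp.
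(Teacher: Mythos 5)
Your proposal is correct and follows essentially the same route as the paper's proof: the same witness functions $f(w)=|w|^{\frac{2-\alpha}{1-\alpha}}$ (for $\alpha\in(0,1)$) and $f(w)=e^{w}+e^{-w}$ (for $\alpha=1$), the same reduction to a scalar recursion on $r_t=|w_t|$, and the same superlinear-growth argument showing $\gamma\|\nabla f\|^{1-\beta}\ge 3r_t$ beyond a $\gamma$-dependent threshold $C$, whence $r_{t+1}\ge 2r_t$ and geometric divergence by induction. The extra remarks on the radial $d$-dimensional variants and the nonvanishing of the gradient along the trajectory are harmless additions not present in (but consistent with) the paper's argument.
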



\section{Expected $\alpha$-Symmetric Generalized-Smooth Functions in Stochastic Optimization}\label{sec:EL}

In this section, we propose a class of  expected $\alpha$-symmetric generalized-smooth functions 
and study their properties in stochastic optimization. Specifically, we consider the following nonconvex stochastic optimization problem 
\begin{align}
	\min_{w\in\mathbb{R}^d} f(w):=\mathbb{E}_{\xi\sim\mathbb{P}} [f_{\xi}(w)],
\end{align}
where $\mathbb{P}$ denotes the distribution of the data sample $\xi$. Throughout, we adopt the following standard assumption on the stochastic gradients \cite{ghadimi2013stochastic,fang2018SPIDER,jin2021non,arjevani2022lower}. 
\begin{assum}\label{assum:var}
	The stochastic gradient is unbiased, i.e., $\mathbb{E}_{\xi\sim\mathbb{P}}[\nabla f_{\xi}(w)]=\nabla f(w)$ and satisfies the following variance bound for some $\Gamma, \Lambda>0$.
	\begin{align}
		\mathbb{E}_{\xi\sim\mathbb{P}}\|\nabla f_{\xi}(w)-\nabla f(w)\|^2\le \Gamma^2\|\nabla f(w)\|^2+\Lambda^2.\label{ESG}
	\end{align}
\end{assum}

If only the population loss $f$ is smooth, i.e., $f\in\mathcal{L}$, a recent work has established a sample complexity lower bound $\mathcal{O}(\epsilon^{-4})$ for first-order stochastic algorithms \citep{arjevani2022lower}, which can be achieved by the standard stochastic gradient descent (SGD) algorithm \citep{ghadimi2013stochastic} and its clipped and normalized versions \citep{zhang2019gradient}. Therefore, one should not expect an improved sample complexity 
when optimizing the larger class of generalized-smooth functions $\mathcal{L}_{\text{sym}}^*(\alpha)$. 
To overcome this sample complexity barrier, many existing works consider the subclass of expected smooth functions $\mathbb{E}\mathcal{L}$, in which there exists a constant $L_0>0$ such that for all $w,w'\in\mathbb{R}^d$, all the functions $f_{\xi}$ satisfy
\begin{align}
	\mathbb{E}_{\xi}\|\nabla f_{\xi}(w')-\nabla f_{\xi}(w)\|^2\le L_0^2\|w'-w\|^2. \label{avg_Lsmooth}
\end{align}
Many variance-reduced algorithms, e.g., SPIDER \cite{fang2018SPIDER} and STORM \cite{cutkosky2019momentum}, have been proved to achieve the near-optimal sample complexity $\mathcal{O}(\epsilon^{-3})$ for optimizing functions in $\mathbb{E}\mathcal{L}$. Therefore, we are inspired to propose and study the following expected $\alpha$-symmetric generalized-smooth function class 
$\mathbb{E}\mathcal{L}_{\text{sym}}^*(\alpha)$.
\begin{definition}[$\mathbb{E}\mathcal{L}_{\text{sym}}^*(\alpha)$ function class]\label{def:avgP}
	For $\alpha \in [0,1]$, the expected $\alpha$-symmetric generalized-smooth function class $\mathbb{E}\mathcal{L}_{\text{sym}}^*(\alpha)$ is the class of differentiable stochastic functions $f=\mathbb{E}_{\xi}[f_{\xi}]$ that satisfy the following condition for all $w, w'\in \mathbb{R}^d$ and some constants $L_0, L_1>0$.
	\begin{align} 
		&\mathbb{E}_{\xi\sim\mathbb{P}}\|\nabla f_{\xi}(w')-\nabla f_{\xi}(w)\|^2\le\|w'-w\|^2\mathbb{E}_{\xi\sim\mathbb{P}}\big(L_0+L_1\max_{\theta\in[0,1]}\|\nabla f_{\xi}(w_{\theta})\|^{\alpha}\big)^2\label{avgP}
	\end{align}
	where $w_{\theta}:=\theta w'+(1-\theta)w$. 
\end{definition}
\textbf{Remark:} It is clear that the function class $\mathbb{E}\mathcal{L}_{\text{sym}}^*(0)$ is equivalent to the function class $\mathbb{E}\mathcal{L}$. 
Also, a sufficient condition to guarantee $f\in\mathbb{E}\mathcal{L}_{\text{sym}}^*(\alpha)$ is that $f_{\xi}\in \mathcal{L}_{\text{sym}}^*(\alpha)$ for every sample $\xi$. 

\begin{proposition}\label{prop:eg_avg}
	Both the aforementioned phase retrieval problem and DRO problem belong to $\mathbb{E}\mathcal{L}_{\text{sym}}^*(\alpha)$ {with $\alpha=\frac{2}{3}, 1$ respectively}.
\end{proposition}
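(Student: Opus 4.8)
The plan is to reduce both claims to a per-sample membership statement via the sufficient condition noted after Definition \ref{def:avgP}. Concretely, suppose that for a.e.\ sample $\xi$ the component function $f_\xi$ belongs to $\mathcal{L}_{\text{sym}}^*(\alpha)$, and moreover that the associated constants $L_0(\xi),L_1(\xi)$ are essentially bounded in $\xi$, so that we may take fixed $L_0,L_1$ dominating them. Then \eqref{P} holds pointwise in $\xi$ with these common constants; squaring it and taking $\mathbb{E}_\xi$ on both sides yields exactly \eqref{avgP}, hence $f=\mathbb{E}_\xi f_\xi\in\mathbb{E}\mathcal{L}_{\text{sym}}^*(\alpha)$. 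Thus the whole proof amounts to verifying, for each of the two problems, that every component function lies in the corresponding $\mathcal{L}_{\text{sym}}^*(\alpha)$ with constants uniform in the sample.

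For the phase retrieval problem \eqref{obj_phase}, write $f(z)=\mathbb{E}_{r}\big[\tfrac12\big(y_r-|a_r^{\top}z|^2\big)^2\big]$ where $r$ is uniform on $\{1,\dots,m\}$, so the component function is $f_r(z)=\tfrac12\big(y_r-|a_r^{\top}z|^2\big)^2$, i.e.\ a single-measurement instance of the phase retrieval objective. I would show $f_r\in\mathcal{L}_{\text{sym}}^*(2/3)$ exactly as in the proof of Proposition \ref{thm_phase} specialized to a single measurement: with $u:=a_r^{\top}z$ one has $\nabla f_r(z)=2u(u^2-y_r)\,a_r$ and $\nabla^2 f_r(z)=2(3u^2-y_r)\,a_r a_r^{\top}$, and then the Hessian-type bound $\|\nabla^2 f_r(z)\|\le L_0+L_1\|\nabla f_r(z)\|^{2/3}$ holds (integrating this bound along the segment $w_\theta$ gives \eqref{P}). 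The exponent $2/3$ is exactly what lets the cubic growth $\|\nabla f_r\|\sim|u|^3$ dominate the quadratic growth $\|\nabla^2 f_r\|\sim u^2$ for large $|u|$, while at the loci $u=0$ and $u^2=y_r$ (where $\nabla f_r$ vanishes) the Hessian norm equals $2y_r\|a_r\|^2$ and $4y_r\|a_r\|^2$ respectively and is absorbed into $L_0$. Since there are finitely many measurements, taking $L_0,L_1$ to be the maxima over $r$ of the resulting expressions gives constants independent of $r$, and the reduction above applies with $\alpha=2/3$.

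For the DRO problem \eqref{DRO2}, the component function is $L_\xi(x,\eta)=\lambda\,\psi^*\!\big(\tfrac{\ell_\xi(x)-\eta}{\lambda}\big)+\eta$. Under the same mild assumptions on $\ell_\xi$ and $\psi$ used to derive the equivalence \eqref{DRO2}, the analysis of \cite{jin2021non} already establishes that each $L_\xi$ satisfies the asymmetric condition \eqref{asym} with constants governed only by the uniform smoothness/variance parameters of the family $\{\ell_\xi\}$ and by the regularity of $\psi^*$, hence independent of $\xi$. By item \ref{item:asym} of Theorem \ref{thm: func_class} we have $\mathcal{L}_{\text{asym}}^*\subset\mathcal{L}_{\text{sym}}^*(1)$, and inspection of that inclusion shows the transformed constants remain uniform in $\xi$; therefore $L_\xi\in\mathcal{L}_{\text{sym}}^*(1)$ uniformly, and the reduction above gives $L\in\mathbb{E}\mathcal{L}_{\text{sym}}^*(1)$.

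The main obstacle I anticipate is precisely the uniformity of the smoothness constants over the sample space. For phase retrieval this is immediate because the sample index ranges over a finite set, but for DRO one must track through the derivation in \cite{jin2021non} that the constants in the bound for $L_\xi$ depend only on the ($\xi$-independent) parameters assumed for $\{\ell_\xi\}$ and $\psi$, and not on the individual sample; without such a uniform bound the right-hand side of \eqref{avgP} need not be controlled by the $\xi$-independent $L_0,L_1$ that Definition \ref{def:avgP} requires. A secondary and purely routine point is the case analysis at the degenerate loci of the phase retrieval Hessian bound ($a_r^{\top}z=0$ and $(a_r^{\top}z)^2=y_r$), where the gradient vanishes but one still exhibits a finite additive constant $L_0$.
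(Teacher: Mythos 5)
Your proposal is correct and follows the same overall strategy as the paper: both reduce the claim to showing that each component function lies in $\mathcal{L}_{\text{sym}}^*(\alpha)$ with sample-independent constants, and then invoke the sufficient condition stated after Definition \ref{def:avgP} (squaring \eqref{P} and taking $\mathbb{E}_\xi$ gives \eqref{avgP}). The one substantive difference is how per-sample membership is verified for phase retrieval: you pass through the second-order bound $\|\nabla^2 f_r(z)\|\le L_0+L_1\|\nabla f_r(z)\|^{2/3}$ and integrate along the segment, whereas the paper works directly with the first-order gradient difference, lower-bounding $\|\nabla f_\xi(z)\|^{2/3}$ by a multiple of $|a_\xi^{\top}z|^2$ minus a constant (their eq.\ \eqref{DRO_2b3}) and absorbing the remaining terms. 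Your route is valid --- $f_r$ is a polynomial, hence twice differentiable, and the integration step $\|\nabla f(w')-\nabla f(w)\|\le\|w'-w\|\int_0^1\|\nabla^2 f(w_\theta)\|\,d\theta$ yields \eqref{P} for any $\alpha$, not only the $\alpha=1$ case treated in item 2 of Theorem \ref{thm: func_class} --- and it is arguably more transparent, since everything reduces to a scalar inequality in $u=a_r^{\top}z$ whose two growth rates ($u^2$ versus $|u|^{2}$ from $(|u|^3)^{2/3}$) match at infinity; uniformity over the finite index set $r\in\{1,\dots,m\}$ is immediate, as you note. For DRO your argument coincides with the paper's: the paper explicitly recomputes the asymmetric-type bound for each $L_\xi$ with constants $L+\tfrac{2M(G+1)^2}{\lambda}$ and $L$ depending only on the $\xi$-uniform parameters $G,L,M,\lambda$, which is precisely the uniformity you correctly flag as the one point that must be checked.
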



We further develop the following technical tools associated with the function class $\mathbb{E}\mathcal{L}_{\text{sym}}^*(\alpha)$, which are used later to analyze a stochastic algorithm. 

\begin{proposition}[Technical tools]\label{prop:avgsmooth}
	Under Assumption \ref{assum:var}, the following statements hold. 
	\begin{enumerate}[leftmargin=*,topsep=0mm,itemsep=1mm]
		\item\label{item:avg_poly} For any $\alpha\in(0,1)$, function $f=\mathbb{E}_{\xi}[f_{\xi}]$ belongs to $\mathbb{E}\mathcal{L}_{\text{sym}}^*(\alpha)$ if and only if for any $w,w'\in\mathbb{R}^d$,
		\begin{align}
			&\mathbb{E}_{\xi}\|\nabla f_{\xi}(w')-\nabla f_{\xi}(w)\|^2 \le\|w'-w\|^2\big(\overline{K}_0+\overline{K}_1\mathbb{E}_{\xi}\|\nabla f_{\xi}(w)\|^{\alpha}+\overline{K}_2\|w'-w\|^{\frac{\alpha}{1-\alpha}}\big)^2,\label{PSavg_poly_up}
		\end{align}
		where 
		$\overline{K}_0=2^{\frac{2-\alpha}{1-\alpha}}L_0$, $\overline{K}_1=2^{\frac{2-\alpha}{1-\alpha}}L_1$, $\overline{K}_2=(5L_1)^{\frac{1}{1-\alpha}}$;
		\item\label{item:avg_exp} For $\alpha=1$, function $f=\mathbb{E}_{\xi}[f_{\xi}]$ belongs to $\mathbb{E}\mathcal{L}_{\text{sym}}^*(\alpha)$ if and only if for any $w,w'\in\mathbb{R}^d$,
		\begin{align}
			&\mathbb{E}_{\xi}\|\nabla f_{\xi}(w')-\nabla f_{\xi}(w)\|^2\le2\|w'-w\|^2(L_0^2+2L_1^2\mathbb{E}_{\xi}\|\nabla f_{\xi}(w)\|^2)\exp(12L_1^2\|w'-w\|^2).\label{PSavg_exp_up}
		\end{align}         
		\item\label{item:avg2f} $\mathbb{E}\mathcal{L}_{\text{sym}}^*(\alpha)\subset\mathcal{L}_{\text{sym}}^*(\alpha)$.
	\end{enumerate}
\end{proposition}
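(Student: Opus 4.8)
The plan is to establish the three items in the stated order, handling items~1 and~2 together as the stochastic ($L^2(\mathbb{P})$) counterparts of items~1 and~2 of Proposition~\ref{prop:PS_equiv}, and then deducing item~3 from an intermediate integral characterization obtained along the way. The organizing principle is to replace, everywhere in the deterministic proof of Proposition~\ref{prop:PS_equiv}, the scalar gradient norm $\theta\mapsto\|\nabla f(w_\theta)\|$ by the $L^2(\mathbb{P})$ quantity $\theta\mapsto(\mathbb{E}_{\xi}\|\nabla f_\xi(w_\theta)-\nabla f_\xi(w)\|^2)^{1/2}$, and the triangle inequality for $\|\cdot\|$ by Minkowski's inequality in $L^2(\mathbb{P})$.

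For items~1 and~2, I would proceed in three steps. \emph{Step (a): an intermediate integral form.} I would show that \eqref{avgP} is equivalent to
\begin{align*}
	\big(\mathbb{E}_{\xi}\|\nabla f_\xi(w')-\nabla f_\xi(w)\|^2\big)^{1/2}\le\|w'-w\|\int_0^1\big(\mathbb{E}_{\xi}(L_0+L_1\|\nabla f_\xi(w_\theta)\|^{\alpha})^2\big)^{1/2}\,d\theta.
\end{align*}
One direction (this form implies \eqref{avgP}) is straightforward via $(\int g)^2\le\int g^2$, the Fubini--Tonelli theorem, and the fact that an average over $\theta$ is at most a maximum; for the other direction, I would uniformly partition $[w,w']$ into $n$ pieces $\{w_{k/n}\}_{k=0}^n$, apply the triangle inequality pointwise in $\xi$ and then Minkowski's inequality in $L^2(\mathbb{P})$ to $\nabla f_\xi(w')-\nabla f_\xi(w)=\sum_k(\nabla f_\xi(w_{(k+1)/n})-\nabla f_\xi(w_{k/n}))$, bound the $L^2(\mathbb{P})$ norm of each increment by \eqref{avgP} applied to the sub-segment, and let $n\to\infty$; the Riemann sum converges to the integral, with a domination step to pass the limit through $\mathbb{E}_{\xi}$ (using continuity of $\nabla f_\xi$ along the segment, so $\max_{\theta\in[k/n,(k+1)/n]}\|\nabla f_\xi(w_\theta)\|\to\|\nabla f_\xi(w_{k/n})\|$). \emph{Step (b): closing the forward direction.} Substituting $w'=w_\theta$ into the intermediate form and setting $G(\theta):=\int_0^\theta(\mathbb{E}_{\xi}(L_0+L_1\|\nabla f_\xi(w_s)\|^{\alpha})^2)^{1/2}\,ds$, I would derive, via Minkowski, subadditivity of $t\mapsto t^{\alpha}$, and Assumption~\ref{assum:var}, a differential inequality $G'(\theta)\le A+B\,G(\theta)^{\alpha}$ for $\alpha\in(0,1)$ (resp.\ $G'(\theta)\le A+B\,G(\theta)$ for $\alpha=1$), with $A$ gathering the $L_0$ and $\mathbb{E}_{\xi}\|\nabla f_\xi(w)\|^{\alpha}$ contributions and $B\asymp L_1\|w'-w\|^{\alpha}$; solving by comparison (power law $G(1)\lesssim A+B^{1/(1-\alpha)}$ for $\alpha<1$, Gr\"onwall $G(1)\lesssim A\,e^{B}$ for $\alpha=1$) and substituting back yields \eqref{PSavg_poly_up} and \eqref{PSavg_exp_up} with the stated constants. \emph{Step (c): the reverse direction} follows by the same partition-and-limit device applied to \eqref{PSavg_poly_up}/\eqref{PSavg_exp_up}, where the $\overline{K}_0,\overline{K}_1$ terms average into the integral while $\|w_{(k+1)/n}-w_{k/n}\|^{\alpha/(1-\alpha)}\to0$ and $\exp(12L_1^2\|w_{(k+1)/n}-w_{k/n}\|^2)\to1$ make the extra factors disappear, recovering the intermediate form and hence \eqref{avgP}.

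For item~3, I would combine the intermediate integral form from Step (a) with Jensen's inequality to pass from $f_\xi$ to $f$: since $\|\nabla f(w')-\nabla f(w)\|=\|\mathbb{E}_{\xi}[\nabla f_\xi(w')-\nabla f_\xi(w)]\|\le(\mathbb{E}_{\xi}\|\nabla f_\xi(w')-\nabla f_\xi(w)\|^2)^{1/2}$, the intermediate form gives $\|\nabla f(w')-\nabla f(w)\|\le\|w'-w\|\int_0^1(\mathbb{E}_{\xi}(L_0+L_1\|\nabla f_\xi(w_\theta)\|^{\alpha})^2)^{1/2}\,d\theta$. I would then bound the integrand pointwise by $L_0+L_1(\mathbb{E}_{\xi}\|\nabla f_\xi(w_\theta)\|^2)^{\alpha/2}$ (Minkowski, then Jensen for $t\mapsto t^{\alpha/2}$), use Assumption~\ref{assum:var} in the form $\mathbb{E}_{\xi}\|\nabla f_\xi(w_\theta)\|^2\le 2(1+\Gamma^2)\|\nabla f(w_\theta)\|^2+2\Lambda^2$, and apply subadditivity of $t\mapsto t^{\alpha/2}$ to obtain $\|\nabla f(w')-\nabla f(w)\|\le\|w'-w\|\big(\widetilde{L}_0+\widetilde{L}_1\int_0^1\|\nabla f(w_\theta)\|^{\alpha}\,d\theta\big)$ for suitable $\widetilde{L}_0,\widetilde{L}_1>0$. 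This is exactly the integral characterization \eqref{eq: equiv} of $\mathcal{L}_{\text{sym}}^*(\alpha)$ in Lemma~\ref{lemma:int}, so $f\in\mathcal{L}_{\text{sym}}^*(\alpha)$; the cases $\alpha=1$ and $\alpha=0$ go through identically, with $\alpha=0$ reducing to a bare application of Jensen.

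I expect Steps (a) and (c) to be the main obstacle: the defining condition \eqref{avgP} carries a pointwise-in-$\xi$ maximum $\max_{\theta\in[0,1]}\|\nabla f_\xi(w_\theta)\|$ \emph{inside} the expectation, and since $\mathbb{E}_{\xi}\max_{\theta}\neq\max_{\theta}\mathbb{E}_{\xi}$ and the maximum is over a continuum, it cannot be dominated by $\max_{\theta}\mathbb{E}_{\xi}\|\nabla f_\xi(w_\theta)\|$; the partition-and-limit argument is precisely what dissolves this, since on each vanishing sub-segment the maximum collapses to a point value and the Riemann sums converge. A secondary subtlety, absent in the deterministic setting, is that Minkowski in $L^2(\mathbb{P})$ forces the integrand to be an $L^2(\mathbb{P})$ norm of $L_0+L_1\|\nabla f_\xi(\cdot)\|^{\alpha}$, so one must control the $L^2$ quantity $(\mathbb{E}_{\xi}\|\nabla f_\xi\|^{2\alpha})^{1/2}$ by the $L^1$ quantity $\mathbb{E}_{\xi}\|\nabla f_\xi(w)\|^{\alpha}$ appearing in \eqref{PSavg_poly_up}; this is where Assumption~\ref{assum:var} is indispensable, since it bounds the Jensen gap $\mathrm{Var}(\|\nabla f_\xi(w)\|^{\alpha})$ through the variance bound on $\nabla f_\xi$. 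Justifying the interchange of the $n\to\infty$ limit with $\mathbb{E}_{\xi}$ in Steps (a) and (c) is routine given continuity of $\nabla f_\xi$ along the segment.
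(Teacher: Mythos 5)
Your plan is architecturally the same as the paper's proof: an intermediate integral characterization of \eqref{avgP} obtained by partitioning the segment and passing to the limit (the paper's Lemma~\ref{lemma:avg_int}), a differential inequality for the accumulated smoothness along the segment solved by power-law comparison for $\alpha\in(0,1)$ and Gr\"onwall for $\alpha=1$, a second partition-and-limit (with Fatou) for the converse, and Jensen plus Assumption~\ref{assum:var} to descend to the population loss for item~3. The genuine difference is that you work throughout with $L^2(\mathbb{P})$ norms and Minkowski's inequality, so your $G(\theta)$ is $\int_0^\theta(\mathbb{E}_{\xi}(L_0+L_1\|\nabla f_\xi(w_s)\|^{\alpha})^2)^{1/2}ds$, whereas the paper keeps everything at the level of second moments, with $G(\theta)=\mathbb{E}_{\xi}\int_0^{\theta}(L_0+L_1\|\nabla f_{\xi}(w_u)\|^{\alpha})^2du$, and only takes square roots at the very end. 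Your variant is slightly cleaner notationally and your item~3 (intermediate form, then $\|\nabla f(w')-\nabla f(w)\|\le(\mathbb{E}_\xi\|\cdot\|^2)^{1/2}$, then Assumption~\ref{assum:var} and Lemma~\ref{lemma:int}) is correct and a little more direct than the paper's detour through \eqref{PSavg_poly_up} and Lemma~\ref{lemma:moment}.

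The one soft spot is the endpoint of your Step~(b). Your ODE naturally delivers a coefficient $A$ containing $(\mathbb{E}_{\xi}\|\nabla f_{\xi}(w)\|^{2})^{\alpha/2}$ (or $(\mathbb{E}_{\xi}\|\nabla f_{\xi}(w)\|^{2\alpha})^{1/2}$), while \eqref{PSavg_poly_up} is stated with $\mathbb{E}_{\xi}\|\nabla f_{\xi}(w)\|^{\alpha}$, and Jensen orders these the \emph{wrong} way: $\mathbb{E}_{\xi}\|\nabla f_{\xi}(w)\|^{\alpha}\le(\mathbb{E}_{\xi}\|\nabla f_{\xi}(w)\|^{2})^{\alpha/2}$. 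So landing on the stated right-hand side requires a reverse moment inequality, and your remark that Assumption~\ref{assum:var} ``bounds the Jensen gap'' is not a one-line fix --- it can be made to work (e.g.\ via a H\"older interpolation of $\mathbb{E}_\xi\|\nabla f_\xi(w)\|$ between $\mathbb{E}_\xi\|\nabla f_\xi(w)\|^{\alpha}$ and $\mathbb{E}_\xi\|\nabla f_\xi(w)\|^{2}$ combined with the variance bound), but that argument needs to be written out, and it will perturb the constants $\overline{K}_0,\overline{K}_1,\overline{K}_2$. Note that the paper sidesteps this entirely: its forward direction terminates at $\|w'-w\|^2\,\mathbb{E}_{\xi}\bigl(\overline{K}_0+\overline{K}_1\|\nabla f_{\xi}(w)\|^{\alpha}+\overline{K}_2\|w'-w\|^{\frac{\alpha}{1-\alpha}}\bigr)^2$ with the expectation \emph{outside} the square, which is all that is used downstream (in Lemma~\ref{lemma:df_diff_SPIDER} one can expand the square and apply Lemma~\ref{lemma:moment} with $\tau=2\alpha$). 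If you target that weaker form instead of the literal \eqref{PSavg_poly_up}, your argument closes without the reverse-moment step; otherwise you must supply it.
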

\textbf{Remark:} Item 3 of Proposition \ref{prop:avgsmooth} implies that we can apply the descent lemmas (items 3 \& 4 of Proposition \ref{prop:PS_equiv}) to the population loss $f=\mathbb{E}_{\xi}[f_{\xi}]$. This is very useful later in the convergence analysis of our proposed stochastic algorithm. 

\section{Optimal Method for Solving Nonconvex Problems in $\mathbb{E}\mathcal{L}_{\text{sym}}^*(\alpha)$}

In this section, we explore stochastic algorithms for solving nonconvex problems in the function class $\mathbb{E}\mathcal{L}_{\text{sym}}^*(\alpha)$ and see if any algorithm can achieve the optimal sample complexity.

In the existing literature, many stochastic variance reduction algorithms, e.g., SPIDER \citep{fang2018SPIDER} and STORM \citep{cutkosky2019momentum}, have been developed and proved to achieve the optimal sample complexity $\mathcal{O}(\epsilon^{-3})$ for minimizing the class of expected-smooth stochastic nonconvex problems (i.e., $\mathbb{E}\mathcal{L}$). However, for the extended class $\mathbb{E}\mathcal{L}_{\text{sym}}^*(\alpha)$, it is unclear what is the sample complexity lower bound and the optimal stochastic algorithm design.  
Inspired by the existing literature and the structures of functions in $\mathbb{E}\mathcal{L}_{\text{sym}}^*(\alpha)$, a good algorithm design must apply both variance reduction and a proper normalization to the stochastic updates in order to combat the generalized-smoothness and achieve an improved sample complexity. Interestingly, we discover that the original SPIDER algorithm design already well balances these two techniques and can be directly applied to solve problems in $\mathbb{E}\mathcal{L}_{\text{sym}}^*(\alpha)$. The original SPIDER algorithm is summarized in Algorithm \ref{algo: SPIDER} below. 


\begin{algorithm}
	\caption{SPIDER \citep{fang2018SPIDER}}\label{algo: SPIDER}
	{\bf Input:} Iteration number $T$, epoch size $q$, initialization $w_0$, learning rate $\gamma$, batchsize $|S_t|$.\\
	\For{ $t=0,1, 2,\ldots, T-1$}{
		Sample a minibatch of data $S_t$. \\
		\eIf{$t\!\mod q=0$}{
			Compute $v_t=\nabla f_{S_t}(w_t)$
		}{
			Compute $v_t=v_{t-1}+\nabla f_{S_t}(w_t)-\nabla f_{S_t}(w_{t-1})$
		}
		Update $w_{t+1}=w_t-\gamma\frac{v_t}{\|v_t\|}$.
		
	}
	\textbf{Output:} $w_{\widetilde{T}}$ where $\widetilde{T}$ is sampled from $\{0,1,\ldots,T-1\}$ uniformly at random..
\end{algorithm}


However, establishing the convergence of SPIDER for the extended function class $\mathbb{E}\mathcal{L}_{\text{sym}}^*(\alpha)$ is fundamentally more challenging. Intuitively, this is because the characterization of variance of the stochastic update $v_t$ is largely affected by the generalized-smoothness structure, and it takes a complex form that needs to be treated carefully. Please refer to the elaboration on technical novelty later for more details. 

Surprisingly, by choosing proper hyper-parameters that are adapted to the function class $\mathbb{E}\mathcal{L}_{\text{sym}}^*(\alpha)$, we are able to prove that SPIDER achieves the optimal sample complexity as formally stated in the following theorem.



\begin{thm}[Convergence of SPIDER]\label{thm:SPIDER}
	Apply the SPIDER algorithm to minimize any function $f=\mathbb{E}_{\xi}[f_{\xi}]\in\mathbb{E}\mathcal{L}_{\text{sym}}^*(\alpha)$ and assume Assumption \ref{assum:var} hold. Set $|S_t|=B$ when $t \!\mod q=0$ and $|S_t|=B'$ otherwise, and let $B\ge \Omega(\max\{\Lambda^2\epsilon^{-2}, \Gamma^2q^2\}),  B' \ge \Omega(\max\{q, q^2\epsilon^2\})$. Choose  $\gamma=\frac{\epsilon}{2\overline{K}_0+4\overline{K}_2+2\overline{K}_1(\Lambda^{\alpha}+\Gamma^{\alpha}+1)+1}$ when $\alpha\in(0,1)$ and $\gamma=\frac{\epsilon}{5L_1\sqrt{\Gamma^2+1}+8\sqrt{L_0^2+2L_1^2\Lambda^2}}$ when $\alpha=1$ ($\epsilon$ is the target accuracy). Then, the following result holds for $T=qK$ iterations where $K\in\mathbb{N}^+$. 
	\begin{align}
		&\mathbb{E}\|\nabla f(w_{\widetilde{T}})\| \le \frac{16}{5T\gamma}\big(\mathbb{E}f(w_0)-f^*\big)+\frac{4\epsilon}{5}.\label{df_rate}
	\end{align}
	In particular, to achieve $\mathbb{E}\|\nabla f(w_{\widetilde{T}})\|\le \epsilon$, we can choose $B=\mathcal{O}(\epsilon^{-2})$, $B'=q=\mathcal{O}(\epsilon^{-1})$, $\gamma=\mathcal{O}(\epsilon)$ and $T=\mathcal{O}(\epsilon^{-2})$\footnote{See eqs. \eqref{q}-\eqref{K} in Appendix \ref{sec:SPIDERProof} for the full expression of these hyperparameters.} so that the above conditions are satisfied. Consequently, the overall sample complexity is $\mathcal{O}(\epsilon^{-3})$. 
\end{thm}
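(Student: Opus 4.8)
The plan is to run the standard two-part SPIDER argument --- a within-epoch recursive bound on the variance of the estimator $v_t$, then a per-iteration descent inequality, then telescoping --- but to feed it two extra ingredients dictated by generalized smoothness. The first is a \emph{within-epoch gradient-boundedness lemma}: since the normalized update makes $\|w_{t+1}-w_t\|=\gamma$, any two iterates $w_j,w_{t_0}$ in the same length-$q$ epoch satisfy $\|w_j-w_{t_0}\|\le q\gamma=\mathcal O(1)$, and feeding this into the definition of $\mathcal L_{\text{sym}}^*(\alpha)$ (valid for the population loss $f$ by item \ref{item:avg2f} of Proposition \ref{prop:avgsmooth}) gives, writing $M$ for $\max_{\theta}\|\nabla f(w_\theta(w_{t_0},w_j))\|$, the self-bounding inequality $M\le\|\nabla f(w_{t_0})\|+(L_0+L_1M^{\alpha})q\gamma$. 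For $\alpha<1$ the term $L_1M^{\alpha}q\gamma$ is sublinear in $M$, so this forces $M\le C(\|\nabla f(w_{t_0})\|+1)$ for a constant $C=C(L_0,L_1,\alpha)$; for $\alpha=1$ the same conclusion holds once $q$ is small enough that $L_1q\gamma<1$ (guaranteed by the prescribed $\gamma$ and $q=\mathcal O(\epsilon^{-1})$). Hence \emph{all} gradient norms visited inside an epoch are controlled by the one at the epoch's start. The second ingredient is that, by Proposition \ref{prop:avgsmooth}, $f$ inherits both the descent lemmas \eqref{PS_poly_fup}--\eqref{PS_exp_fup} and the equivalent characterizations \eqref{PSavg_poly_up}--\eqref{PSavg_exp_up}.

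\emph{Variance recursion.} With $t_0=q\lfloor t/q\rfloor$ the epoch start, the SPIDER update gives $v_t-\nabla f(w_t)=(v_{t_0}-\nabla f(w_{t_0}))+\sum_{j=t_0+1}^{t}\delta_j$ with $\delta_j=(\nabla f_{S_j}(w_j)-\nabla f_{S_j}(w_{j-1}))-(\nabla f(w_j)-\nabla f(w_{j-1}))$ a martingale difference, because $S_j$ is drawn fresh while $w_j,w_{j-1}$ are already determined. Conditioning on $\mathcal F_{t_0-1}$ and using orthogonality of martingale differences,
\begin{align*}
    \mathbb E\big[\|v_t-\nabla f(w_t)\|^2\,\big|\,\mathcal F_{t_0-1}\big]\le\frac{\Gamma^2\|\nabla f(w_{t_0})\|^2+\Lambda^2}{B}+\frac{1}{B'}\sum_{j=t_0+1}^{t}\mathbb E\big[\mathbb E_\xi\|\nabla f_\xi(w_j)-\nabla f_\xi(w_{j-1})\|^2\,\big|\,\mathcal F_{t_0-1}\big].
\end{align*}
Bounding each summand by the equivalent characterization of $\mathbb E\mathcal L_{\text{sym}}^*(\alpha)$ at displacement $\gamma$, using Jensen and Assumption \ref{assum:var} to turn $\mathbb E_\xi\|\nabla f_\xi(w_{j-1})\|^{\alpha}$ into $\mathcal O\big((1+\Gamma^{\alpha})\|\nabla f(w_{j-1})\|^{\alpha}+\Lambda^{\alpha}\big)$ --- the source of the factor $\Lambda^{\alpha}+\Gamma^{\alpha}+1$ in the prescribed $\gamma$ --- and then invoking the within-epoch gradient-boundedness lemma to replace every $\|\nabla f(w_{j-1})\|$ by $\mathcal O(\|\nabla f(w_{t_0})\|+1)$, I obtain (using $B'\ge\Omega(q)$ so $q/B'\le1$, and $B\ge\Omega(\max\{\Lambda^2\epsilon^{-2},\Gamma^2q^2\})$, and, for $\alpha=1$, $\exp(12L_1^2\gamma^2)\le2$), after taking the conditional square root,
\begin{align*}
    \mathbb E\big[\|v_t-\nabla f(w_t)\|\,\big|\,\mathcal F_{t_0-1}\big]\le\frac{\|\nabla f(w_{t_0})\|}{cq}+\mathcal O(\epsilon)+\mathcal O(\gamma)\big(\|\nabla f(w_{t_0})\|+1\big)^{\alpha},
\end{align*}
with $c$ as large as we like; taking the outer expectation and using concavity of $x\mapsto(x+1)^{\alpha}$ replaces $\|\nabla f(w_{t_0})\|$ by $\mathbb E\|\nabla f(w_{t_0})\|$ throughout. (Taking the square root before the outer expectation is essential: it keeps the right-hand side in terms of the first moment of the gradient, not its second moment.)

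\emph{Descent, telescoping, sample count.} Apply \eqref{PS_poly_fup} or \eqref{PS_exp_fup} with $w=w_t$, $w'=w_{t+1}$; using $-\nabla f(w_t)^\top v_t/\|v_t\|\le-\|\nabla f(w_t)\|+2\|v_t-\nabla f(w_t)\|$ and absorbing the $\tfrac{\gamma^2}{2}$ Taylor correction (for $\alpha<1$ the $\gamma^2\|\nabla f(w_t)\|^{\alpha}$ piece goes into $\tfrac{\gamma}{4}\|\nabla f(w_t)\|$ at the cost of an extra $\mathcal O(\gamma^{(2-\alpha)/(1-\alpha)})=o(\gamma^2)$ via Young's inequality / Lemma \ref{lemma:young}; for $\alpha=1$ one uses $\exp(12L_1^2\gamma^2)\le2$) gives $\mathbb E f(w_{t+1})-\mathbb E f(w_t)\le-\tfrac{3\gamma}{4}\mathbb E\|\nabla f(w_t)\|+2\gamma\,\mathbb E\|v_t-\nabla f(w_t)\|+\mathcal O(\gamma^2)$. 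Summing over $t=0,\dots,T-1$ and telescoping $\sum_t(f(w_t)-f(w_{t+1}))\le f(w_0)-f^*$, and inserting the variance bound: the $\tfrac{1}{cq}$-term contributes $\le\tfrac{2\gamma}{c}\sum_t\mathbb E\|\nabla f(w_t)\|$ (each epoch-start norm counted $q$ times), absorbed for $c$ large; the $\mathcal O(\epsilon)$-term contributes $\mathcal O(\gamma T\epsilon)$; and the $\mathcal O(\gamma)(\mathbb E\|\nabla f(w_{t_0})\|+1)^{\alpha}$-term, again counted $q$ times per epoch, becomes $\mathcal O(q\gamma^2)\sum_k(\mathbb E\|\nabla f(w_{kq})\|+1)^{\alpha}$, which Lemma \ref{lemma:young} splits into a further $\tfrac{\gamma}{8}\sum_t\mathbb E\|\nabla f(w_t)\|$ plus terms of order $\mathcal O(\gamma^2T)$ and $\mathcal O(Tq^{\alpha/(1-\alpha)}\gamma^{(2-\alpha)/(1-\alpha)})=\mathcal O(\epsilon^2T)$ once $q=\mathcal O(\epsilon^{-1})$, $\gamma=\mathcal O(\epsilon)$ (for $\alpha=1$ the within-epoch bound is only linear, so instead one needs $q\gamma$ a sufficiently small constant multiple of $1/L_1$, which the prescribed constants ensure). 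Collecting, a positive constant multiple of $\gamma$ times $\sum_t\mathbb E\|\nabla f(w_t)\|$ survives on the left; dividing by $T\gamma$ and using $\mathbb E\|\nabla f(w_{\widetilde T})\|=\tfrac1T\sum_t\mathbb E\|\nabla f(w_t)\|$ yields \eqref{df_rate}. Finally $T=\mathcal O(\epsilon^{-2})$, and the sample count $\tfrac{T}{q}B+TB'=\mathcal O(\epsilon^{-1}\cdot\epsilon^{-2})+\mathcal O(\epsilon^{-2}\cdot\epsilon^{-1})=\mathcal O(\epsilon^{-3})$.

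\emph{Main obstacle.} The crux is that, unlike vanilla expected-smooth SPIDER where the within-epoch variance of $v_t$ is a flat $\mathcal O(\gamma^2)$, here both the variance recursion and the Taylor correction carry terms that grow with the gradient norms visited along the trajectory, so one cannot decouple ``control the variance'' from ``control the gradients'' --- the two must be closed jointly. The devices that make this possible are the within-epoch gradient-boundedness lemma (which confines all those gradient norms to $\mathcal O(\|\nabla f(w_{t_0})\|+1)$ and hinges on $q\gamma=\mathcal O(1)$, genuinely small relative to $1/L_1$ when $\alpha=1$) and the polynomial-homogenization inequality of Lemma \ref{lemma:young} (which, because $\alpha<1$ makes every offending term sublinear in $\|\nabla f\|$, lets each one be reabsorbed into the principal descent term $\propto\gamma\mathbb E\|\nabla f(w_t)\|$ with only $\mathcal O(\epsilon)$-level leftovers after normalizing the telescoped sum by $T\gamma$). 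Getting the exponent bookkeeping right --- so that every leftover power of $\gamma,q,B,B'$ is $\mathcal O(\epsilon)$ --- is exactly what pins down the hyperparameter choices in the theorem.
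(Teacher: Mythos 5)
Your proposal is sound and reaches the theorem, but it resolves the central difficulty by a genuinely different device than the paper. Both arguments agree on the skeleton (martingale variance recursion for $\delta_t=v_t-\nabla f(w_t)$, the per-step bound $\mathbb{E}_{\xi}\|\nabla f_{\xi}(w_{t+1})-\nabla f_{\xi}(w_t)\|^2\le\epsilon^2(1+\|\nabla f(w_t)\|^2)$ from \eqref{PSavg_poly_up}--\eqref{PSavg_exp_up} with $\|w_{t+1}-w_t\|=\gamma$, a descent lemma from \eqref{PS_poly_fup}--\eqref{PS_exp_fup}, and telescoping), and both correctly identify that the obstruction is converting a second-moment recursion into a bound on $\mathbb{E}\|\delta_t\|$ that is \emph{linear} in $\mathbb{E}\|\nabla f(w_t)\|$ rather than involving $\mathbb{E}\|\nabla f(w_t)\|^{2\alpha}$. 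The paper handles this by conditioning on the full history $S_{1:t}$ at \emph{every} step --- so that $\|\nabla f(w_t)\|$ is deterministic --- and running a backward induction (\eqref{err_induct}) in which a conditional square root is taken at each of the $q$ steps, yielding the bound \eqref{vt_err} with the gradient sum $\sum_{u}\mathbb{E}\|\nabla f(w_{qk+u})\|$ at coefficient $\epsilon/\sqrt{B'}$. You instead condition only once, at the epoch start, and make the entire right-hand side deterministic via a \emph{within-epoch gradient-boundedness lemma}: since $\|w_j-w_{t_0}\|\le q\gamma=\mathcal O(1)$, the self-bounding inequality $M\le\|\nabla f(w_{t_0})\|+(L_0+L_1M^{\alpha})q\gamma$ (which does follow pathwise from Definition \ref{def:P} applied to each pair $(w_{t_0},w_\theta)$, using that the sub-segment maximum is at most $M$) forces every within-epoch gradient norm to be $\le C(\|\nabla f(w_{t_0})\|+1)$, after which a single conditional Jensen square root suffices. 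This is conceptually cleaner and makes the role of $q\gamma=\mathcal O(1)$ (and, for $\alpha=1$, of $L_1q\gamma<1$) explicit, but it is not free of cost: because you take one square root over $q$ accumulated terms rather than $q$ square roots of single terms, your gradient coefficient is $C\epsilon\sqrt{q/B'}$ attached to the single norm $\|\nabla f(w_{t_0})\|$, and after summing over the epoch the absorption condition becomes $B'\gtrsim C^2q^3\epsilon^2$ rather than the paper's $q^2\epsilon^2$; with the instantiation $q=\epsilon^{-1}$ both reduce to $B'=\Theta(\epsilon^{-1})$, so the final $\mathcal O(\epsilon^{-3})$ sample complexity is unaffected, but the general-$q$ hyperparameter condition stated in the theorem would need to be adjusted, and the specific constants in the prescribed $\gamma$ (and the requirement that your within-epoch constant $C$ be small enough to be absorbed by the $\tfrac{3\gamma}{8}\|\nabla f(w_t)\|$ descent margin) would have to be re-derived for your route rather than inherited verbatim. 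These are quantitative, not structural, gaps; the argument as proposed closes.
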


\Cref{thm:SPIDER} proves that SPIDER achieves an overall sample complexity $\mathcal{O}(\epsilon^{-3})$ when solving nonconvex problems in $\mathbb{E}\mathcal{L}_{\text{sym}}^*(\alpha)$ for any $\alpha\in (0,1]$. Note that such a sample complexity matches the well-known sample complexity lower bound for the class of expected-smooth nonconvex optimization problems \cite{fang2018SPIDER}, which is a subset of $\mathbb{E}\mathcal{L}_{\text{sym}}^*(\alpha)$. Consequently, we can make two important observations: (i) this implies that the sample complexity lower bound of $\mathbb{E}\mathcal{L}_{\text{sym}}^*(\alpha)$ is actually $\mathcal{O}(\epsilon^{-3})$; and (ii) the SPIDER algorithm is provably optimal for solving nonconvex problems in such an extended function class. 

\textbf{Technical Novelty.}  Compared with the original analysis of SPIDER for minimizing expected-smooth functions \citep{fang2018SPIDER}, our proof of the above theorem needs to address a major challenge on bounding the expected bias error $\mathbb{E}\|\delta_t\|$ where $\delta_t=v_t-\nabla f(x_t)$. To elaborate more specifically, in the original analysis of SPIDER, \citep{fang2018SPIDER} established the following key lemma (see their Lemma 1) that bounds the martingale variance of the update $v_t$. 
\begin{align}
	\mathbb{E}\|\delta_t\|^2\le \mathbb{E}\|\delta_0\|^2 +\frac{1}{B'}\sum_{k=0}^{t-1}\mathbb{E}_{\xi}\|\nabla f_{\xi}({w_{t+1}}) - \nabla f_{\xi}(w_t)\|^2. \nonumber
\end{align}
The above inequality only depends on the variance reduction structure of SPIDER and hence still holds in our case. However, to further bound the term $\mathbb{E}_{\xi}\|\nabla f_{\xi}({w_{t+1}}) - \nabla f_{\xi}(w_t)\|^2$ for functions in the class $\mathbb{E}\mathcal{L}_{\text{sym}}^*(\alpha)$, we need to leverage the expected generalized-smoothness properties in \eqref{PSavg_poly_up} \& \eqref{PSavg_exp_up} and the update rule $\|w_{t+1}-w_t\|=\gamma=\mathcal{O}(\epsilon)$. We then obtain that $\mathbb{E}_{\xi}\|\nabla f_{\xi}({w_{t+1}}) - \nabla f_{\xi}(w_t)\|^2\le \mathcal{O}(\epsilon^2)+\mathcal{O}(\epsilon^2\|\nabla f(w_t)\|^{2\alpha})$, and consequently, the above martingale variance bound 
becomes
\begin{align}
	\mathbb{E}&\|\delta_t\|^2\le\mathbb{E}\|\delta_0\|^2+\mathcal{O}(\epsilon^2)+\frac{\mathcal{O}(\epsilon^2)}{B'}\sum_{k=0}^{t-1}\mathbb{E}\|\nabla f(w_t)\|^{2\alpha}. \label{martingale2}
\end{align}
When $\alpha>\frac{1}{2}$, the term $\mathbb{E}\|\nabla f(w_t)\|^{2\alpha}$ in \eqref{martingale2} cannot be upper bounded by any functional of $\mathbb{E}\|\nabla f(w_t)\|$, so taking square root of \eqref{martingale2} cannot yield the desired bound $\mathbb{E}\|\delta_t\|\le\mathcal{O}(\epsilon)+\frac{\mathcal{O}(\epsilon)}{\sqrt{B'}}\sum_{k=0}^{t-1}\mathbb{E}\|\nabla f(w_t)\|$ used in the original analysis of SPIDER. 
To address this issue for functions in  $\mathbb{E}\mathcal{L}_{\text{sym}}^*(\alpha)$, we consider the more refined conditional error recursion
\begin{align}
	\mathbb{E}\big(\|\delta_{t+1}\|^2\big|S_{1:t}\big)\le \|\delta_t\|^2+\frac{\epsilon^2}{B'}\big(1+\|\nabla f(w_t)\|^2\big), \label{eq: conditional_variance} 
\end{align}
where there is no randomness in $\|\nabla f(w_t)\|^2$ since we are conditioning on the minibatches $S_{1:t}:=\{S_1,\ldots,S_t\}$ (see \eqref{delta_E2} in Appendix \ref{supp:lemmas}). Therefore, by taking square root of \eqref{eq: conditional_variance} followed by further taking iterated expectation, we can obtain the desired term $\mathbb{E}\|\nabla f(w_t)\|$ in the upper bound of $\mathbb{E}\|\delta_t\|$. After that, we iterate the resulting bound over $t$ via a non-trivial induction argument to complete the analysis (see the proof of \eqref{err_induct} in Lemma \ref{delta_E2} in Appendix \ref{supp:lemmas}). 


\section[title]{Experiments\footnote{The code can be downloaded from \url{https://github.com/changy12/Generalized-Smooth-Nonconvex-Optimization-is-As-Efficient-As-Smooth-Nonconvex-Optimization}}}
\subsection{Application to Nonconvex Phase Retrieval}

In this section, we test our algorithms via solving the nonconvex phase retrieval problem in \eqref{obj_phase}. We set the problem dimension $d=100$ and sample size $m=3000$. The measurement vector $a_r\in\mathbb{R}^d$ and the underlying true parameter $z\in\mathbb{R}^d$ are generated entrywise using Gaussian distribution $\mathcal{N}(0,0.5)$. The initialization $z_0\in\mathbb{R}^d$ is generated entrywise using Gaussian distribution $\mathcal{N}(5,0.5)$. Then, we generate $y_i=|a_r^{\top}z|^2+n_i$ for $i=1,...,m$, where $n_i\sim\mathcal{N}(0,4^2)$ is the additive Gaussian noise. 

\begin{figure}[t]
	\vspace{-5mm}
	\centering
	\begin{subfigure}{0.48\linewidth}   
		\centering 
		\includegraphics[width=\linewidth]{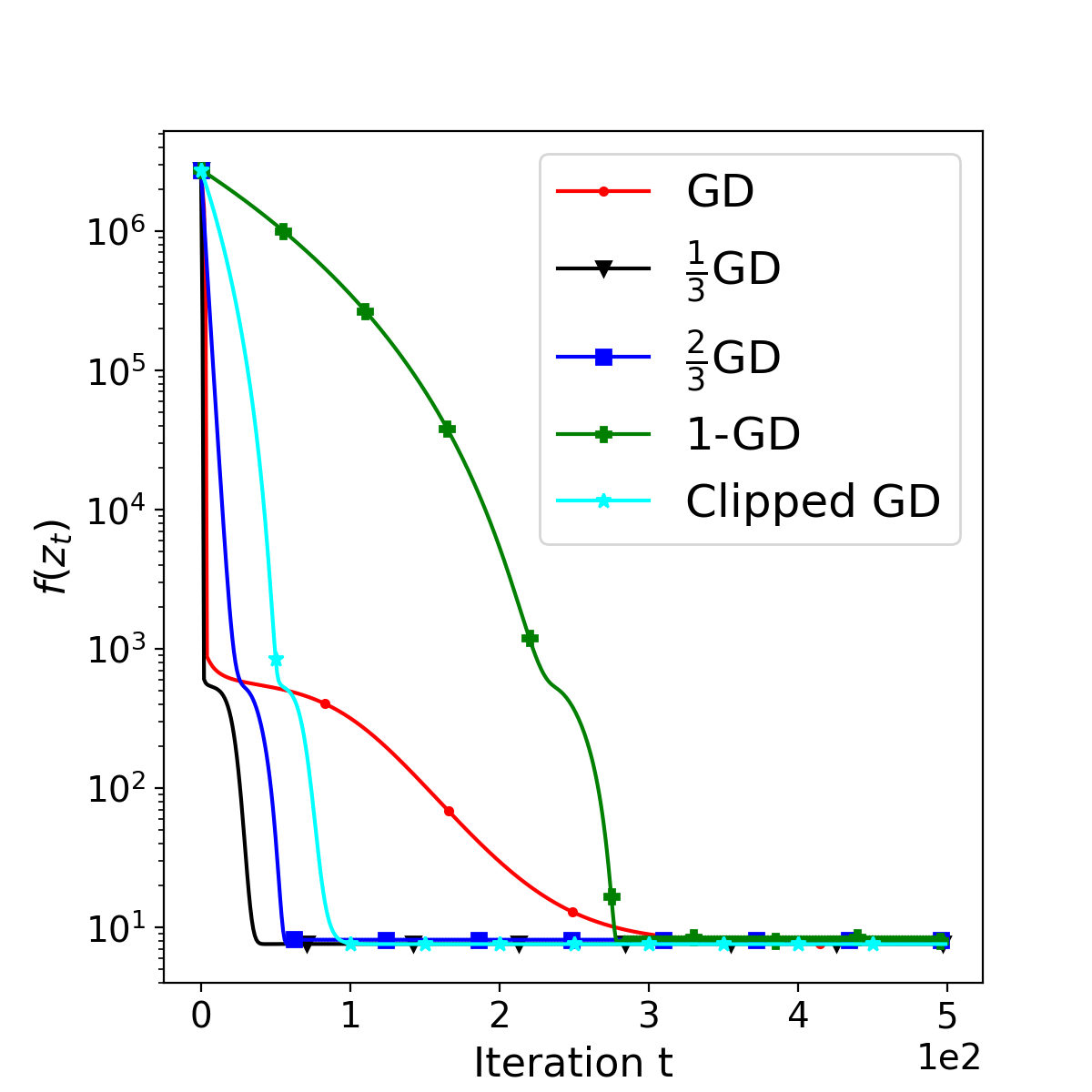}
	\end{subfigure}
	\hspace{-0.02\textwidth}
	\begin{subfigure}{0.48\linewidth}   
		\centering 
		\includegraphics[width=\linewidth]{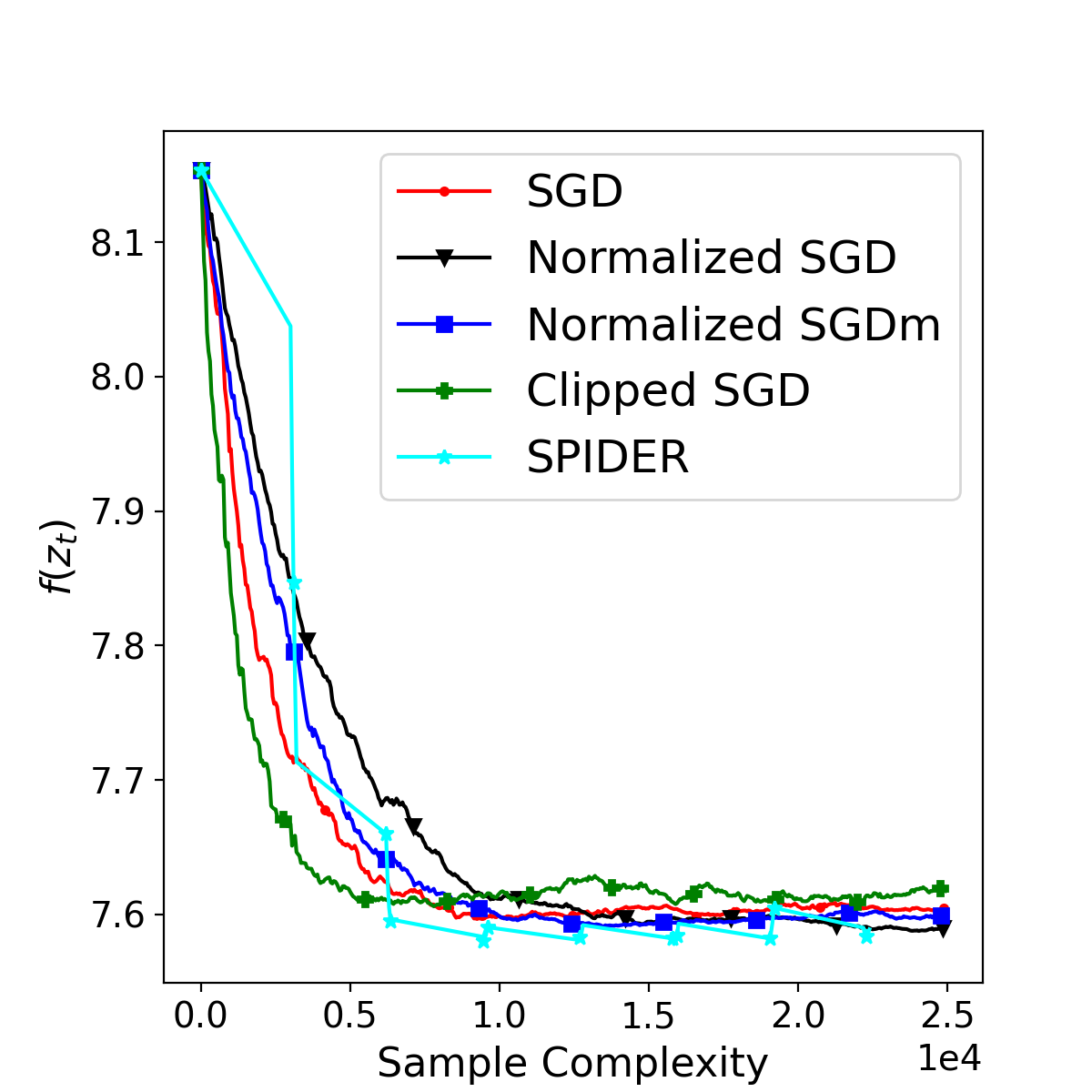}
	\end{subfigure}
	\\
	\begin{subfigure}{0.48\linewidth}   
		\centering 
		\includegraphics[width=\linewidth]{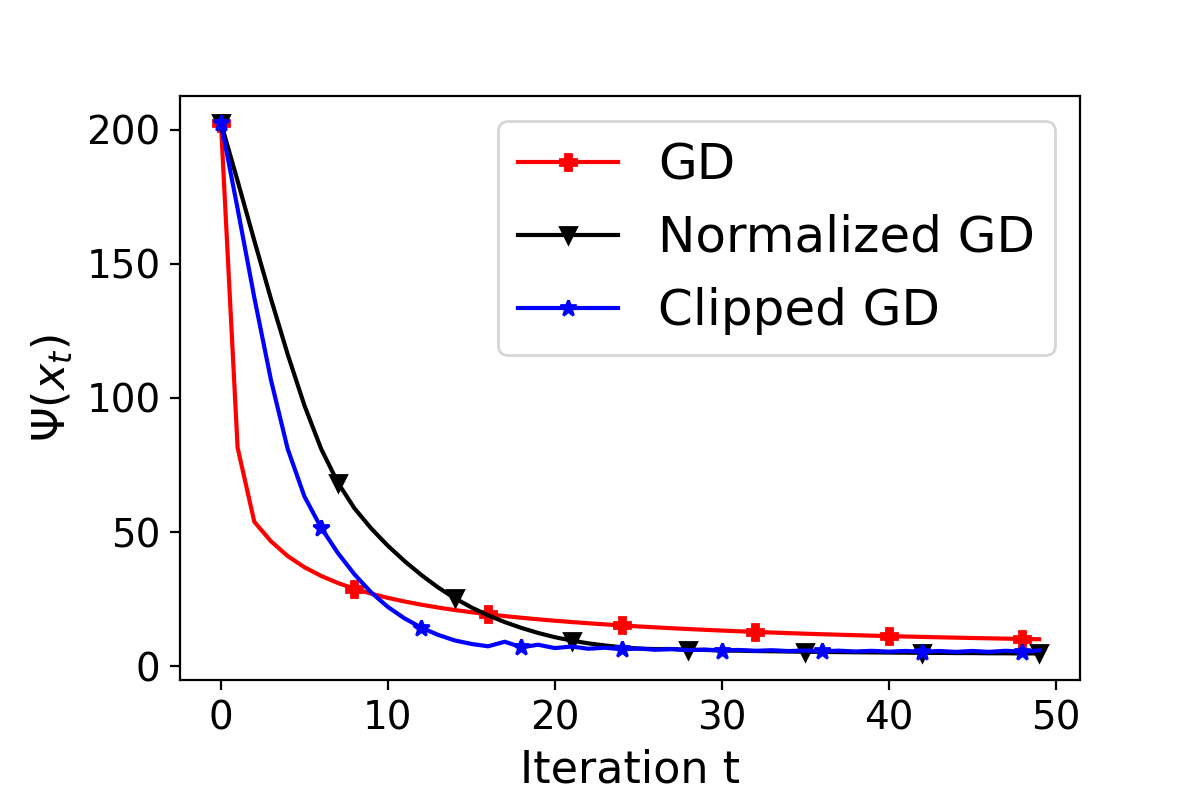}
	\end{subfigure}
	\hspace{-0.02\textwidth}
	\begin{subfigure}{0.48\linewidth}   
		\centering 
		\includegraphics[width=\linewidth]{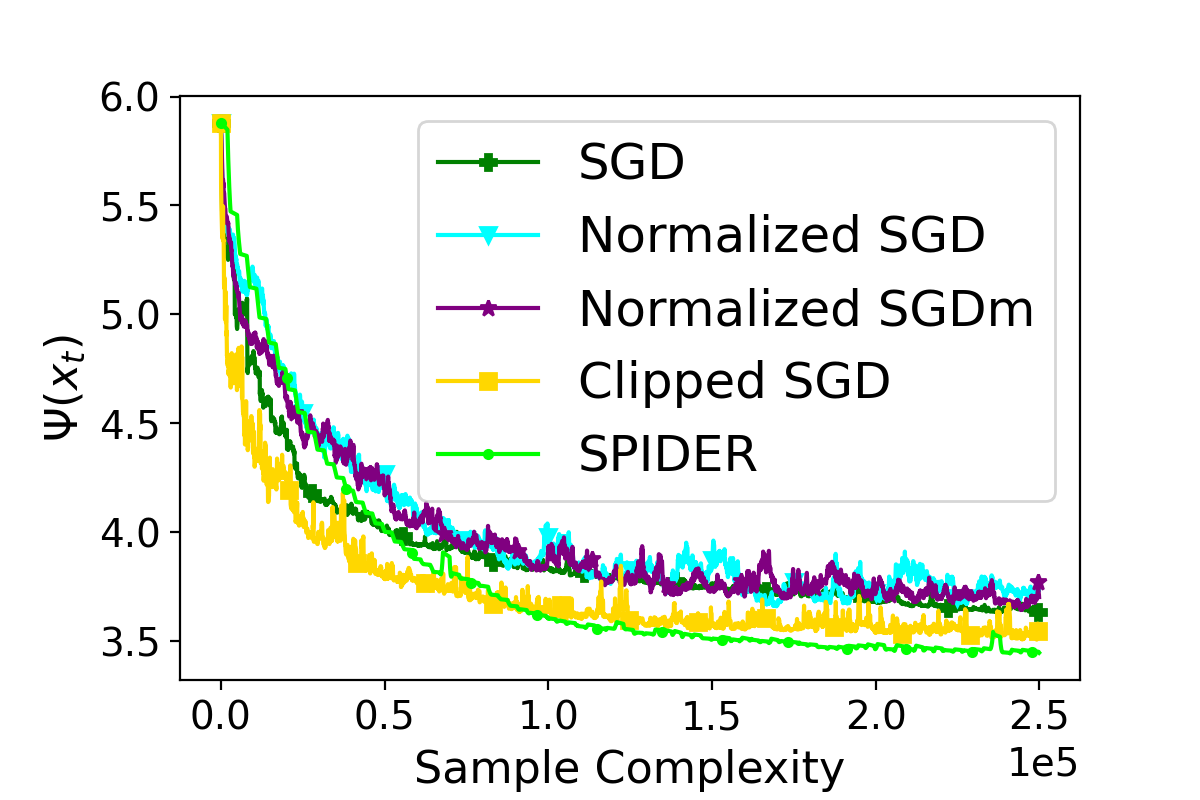}
	\end{subfigure}
	\vspace{0mm}
	\caption{Experimental results (two subfigures above for phase retrieval and two below for DRO).}
	\label{figs}
	\vspace{-5mm}
\end{figure}
We first compare deterministic algorithms with fine-tuned learning rate $\gamma$ over 500 iterations. This includes the basic GD with $\gamma=8\times 10^{-4}$, clipped GD \cite{zhang2019gradient} with $\gamma=0.9$ and normalization term $\max(\|\nabla f(x_t)\|,100)$, and our $\beta$-GD with $\beta = \frac{1}{3}, \frac{2}{3}, 1$ and $\gamma=0.03, 0.1, 0.2$, respectively. Figure \ref{figs} (top left) plots the comparison result on objective function value v.s. iteration. It can be seen that 
our proposed $\beta$-GD with $\beta=\frac{1}{3},\frac{2}{3}$ converges faster than the existing GD, normalized GD ($1$-GD) and clipped GD algorithms, which shows the advantage of using a proper normalization parameter $\beta$.

We further compare stochastic algorithms with fine-tuned learning rate $\gamma$ and fixed batch size $b=50$ over 500 iterations. 
This includes the basic SGD with $\gamma=2\times 10^{-4}$, normalized SGD with $\gamma=2\times 10^{-3}$, normalized SGD with momentum \citep{jin2021non} with $\gamma=3\times 10^{-3}$ and momentum coefficient $10^{-4}$, clipped SGD \cite{zhang2019gradient}  with $\gamma=0.3$ and normalization term $\max(\|\nabla f(z_t)\|,10^3)$, and SPIDER with $\gamma=0.01$, epoch size $q=5$ and batchsizes $B=3000, B'=50$. 
We generate the initialization by running $\frac{2}{3}$-GD with $\gamma=0.1$ for 100 iterations {from $z_0$}.
Figure \ref{figs} (top right) plots the comparison result on objective function value v.s. sample complexity. It can be seen that SPIDER uses slightly more samples at the beginning but converges to a much better solution than the other SGD-type algorithms. This demonstrates the advantage of applying both variance reduction and proper normalization to solve generalized-smooth nonconvex stochastic problems.

\subsection{Application to DRO}
In this section, we test our algorithms via solving the nonconvex DRO problem in \eqref{DRO2} on the life expectancy data\footnote{\url{https://www.kaggle.com/datasets/kumarajarshi/life-expectancy-who?resource=download}}, which collected the life expectancy (target) and its influencing factors (features) of 2413 people for regression analysis. We preprocess the data by filling the missing values with the median of the corresponding variables, censorizing and standardizing all the variables\footnote{The detailed process of filling missing values and censorization can be seen in \url{https://thecleverprogrammer.com/2021/01/06/life-expectancy-analysis-with-python/}}, removing two categorical variables (``country'' and ``status''), and adding standard Gaussian noise to the target to ensure model robustness. We select the first 2000 samples $\{x_i,y_i\}_{i=1}^{2000}$ as the training samples where $x_i\in \mathbb{R}^{34}$ and $y_i\in \mathbb{R}$ are feature and target respectively. In the DRO problem \eqref{DRO2}, we set $\lambda=0.01$ and select $\psi^*(t)=\frac{1}{4}(t+2)_+^2-1$ which corresponds to $\chi^2$ divergence. For any sample pair $x_{\xi},y_{\xi}$, we adopt the regularized mean square loss function $\ell_{\xi}(w)=\frac{1}{2}(y_{\xi}-x_{\xi}^{\top}w)^2+0.1\sum_{j=1}^{34}\ln\big(1+|w^{(j)}|)\big)$ with parameter $w=[w^{(1)};\ldots;w^{(34)}]\in\mathbb{R}^{34}$. We initialize $\eta_0=0.1$ and randomly initialize $w_0\in\mathbb{R}^{34}$ entrywise using standard Gaussian distribution. 

We first compare deterministic algorithms with fine-tuned learning rate $\gamma$ over 50 iterations. This includes the basic GD with $\gamma=10^{-4}$, clipped GD \cite{zhang2019gradient} with $\gamma=0.3$ and normalization term $\max(\|\nabla L(x_t,\eta_t)\|,10)$, and normalized GD (our $\beta$-GD with $\beta=1$) with $\gamma=0.2$, respectively. Figure \ref{figs} (bottom left) plots the comparison result on the objective function value $\Psi(x_t):=\min_{\eta\in\mathbb{R}} L(x_t,\eta)$ ($L$ is defined in eq. \eqref{DRO2}) v.s. iteration. It can be seen that  normalized GD and clipped GD converge to comparable function values and both outperform standard GD. 

We further compare stochastic algorithms with fine-tuned learning rate $\gamma$ and fixed minibatch size $b=50$ over 5000 iterations. This includes the basic SGD with $\gamma=2\times 10^{-4}$, normalized SGD with $\gamma=8\times 10^{-3}$, normalized SGD with momentum with $\gamma=8\times 10^{-3}$ and momentum coefficient $10^{-4}$, clipped SGD [1] with $\gamma=0.05$ and normalization term $\max(\|\nabla L(x_t,\eta_t)\|,100)$, and SPIDER with $\gamma=4\times 10^{-3}$, epoch size $q=20$ and batchsizes $B=2000, B'=50$. We generate the initialization by running normalized GD with $\gamma=0.2$ for 30 iterations from $w_0, \eta_0$. Figure \ref{figs} (bottom right) plots the comparison result on objective function value $\Psi(x_t)$ v.s. sample complexity. It can be seen that SPIDER takes slightly more samples at the beginning but converges to a better solution than the other SGD-type algorithms. This demonstrates the advantage of applying both variance reduction and proper normalization to solve generalized-smooth nonconvex stochastic problems.

\vspace{-2mm}
\section{Conclusion}
In this work, we proposed a new class of generalized-smooth functions that extends the existing ones. We developed both deterministic and stochastic gradient-based algorithms for solving problems in this class and obtained 
the optimal complexities. Our results extend the existing boundary of first-order nonconvex optimization and may inspire new developments in this direction. In the future, it is interesting to explore if other popular variance reduction algorithms such as STORM and SpiderBoost can be normalized to solve generalized-smooth nonconvex stochastic problems.

\section*{Acknowledgement}
The work of Ziyi and Yi Zhou was supported in part by U.S. National Science Foundation under the grants CCF-2106216, DMS-2134223 and CAREER-2237830.

The work of Yingbin was supported in part by the U.S. National Science Foundation under the grants CCF-1900145 and CCF-1909291.

The work of Zhaosong was supported in part by  U.S. National Science Foundation under the grant IIS-2211491.

\bibliographystyle{ieeetr}
\bibliography{Gmooth_Arxiv.bib}

\newpage
\onecolumn
\appendix

\addcontentsline{toc}{section}{Appendix} 
\part{Appendix} 
\parttoc 
\allowdisplaybreaks

\section{Supporting Lemmas}\label{supp:lemmas}
\begin{lemma}\label{lemma:int}
	$f\in\mathcal{L}_{\text{sym}}^*(\alpha)$ if and only if for any $w,w\in\mathbb{R}^d$,
	\begin{align}
		\|\nabla f(w')-\nabla f(w)\|\le \Big(L_0+L_1\int_0^1\|\nabla f(w_{\theta})\|^{\alpha}d\theta\Big)\|w'-w\|\label{int}
	\end{align}
	where $w_{\theta}:=\theta w'+(1-\theta)w$. 
\end{lemma}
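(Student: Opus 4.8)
The plan is to prove the two implications of the equivalence separately. The reverse implication (\eqref{int} $\Rightarrow f\in\mathcal{L}_{\text{sym}}^*(\alpha)$) is essentially immediate: for fixed $w,w'$ one has $\int_0^1\|\nabla f(w_\theta)\|^\alpha\,d\theta\le\max_{\theta\in[0,1]}\|\nabla f(w_\theta)\|^\alpha$, so \eqref{int} directly yields \eqref{P}. (If one wants to be careful about ``$\max$'' versus ``$\sup$'', note first — as explained in the last paragraph — that either condition forces $\nabla f$ to be locally bounded, hence locally Lipschitz and in particular continuous, so the supremum over the compact segment is attained.) The forward implication is the substantive part, and I would handle it by the uniform-subdivision argument.

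For the forward direction, fix $w,w'\in\mathbb{R}^d$ and set $h(\theta):=L_0+L_1\|\nabla f(w_\theta)\|^\alpha$ for $\theta\in[0,1]$. For each $n\in\mathbb{N}^+$, subdivide the segment uniformly with endpoints $w_{k/n}$, $k=0,\dots,n$, and telescope:
\[
\|\nabla f(w')-\nabla f(w)\|\le\sum_{k=0}^{n-1}\big\|\nabla f(w_{(k+1)/n})-\nabla f(w_{k/n})\big\|.
\]
Now apply the defining inequality \eqref{P} to the pair $(w_{k/n},w_{(k+1)/n})$: a short reparametrization check shows that the line segment joining these two points is exactly $\{w_\theta:\theta\in[k/n,(k+1)/n]\}$, and $\|w_{(k+1)/n}-w_{k/n}\|=\tfrac1n\|w'-w\|$, so the $k$-th term is at most $\tfrac1n\|w'-w\|\max_{\theta\in[k/n,(k+1)/n]}h(\theta)$ (using $L_1>0$ to pull the constants through the maximum). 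Hence, for every $n$,
\[
\|\nabla f(w')-\nabla f(w)\|\le\|w'-w\|\sum_{k=0}^{n-1}\frac1n\max_{\theta\in[k/n,(k+1)/n]}h(\theta),
\]
and the right-hand sum is precisely the upper Darboux sum of $h$ on $[0,1]$ for the partition of mesh $1/n$. Letting $n\to\infty$, these upper sums converge to $\int_0^1h(\theta)\,d\theta=L_0+L_1\int_0^1\|\nabla f(w_\theta)\|^\alpha\,d\theta$, and since the left-hand side does not depend on $n$, this gives \eqref{int}.

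Two points must be checked to make that limit rigorous, and I expect the second to be the only genuine obstacle. First, the upper Darboux sums over partitions of vanishing mesh converge to $\int_0^1 h$ provided $h$ is Riemann integrable, and continuity of $\theta\mapsto\nabla f(w_\theta)$ on $[0,1]$ suffices for this. Second, that continuity itself must be extracted from hypothesis \eqref{P}: applying \eqref{P} to a base point $w_0$ and nearby points shows that $S:=\sup_{w\in B(w_0,r)}\|\nabla f(w)\|$ satisfies $S\le\|\nabla f(w_0)\|+(L_0+L_1S^\alpha)r$; since $\alpha\le1$, for $r$ small enough (for $\alpha=1$, any $r<1/L_1$) this inequality forces $S<\infty$, so $\nabla f$ is locally bounded, and then \eqref{P} shows $\nabla f$ is locally Lipschitz, hence continuous; covering the compact segment by finitely many such balls gives continuity of $h$. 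The delicate point is precisely this local-boundedness step: the bound on $S$ is implicit in $S$ itself, so one cannot simply ``solve'' the inequality without first ruling out $S=\infty$, which requires propagating finiteness outward from one endpoint of the segment via an open–closed (or finite-covering) continuation argument; everything after that is routine.
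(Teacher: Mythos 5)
Your proof is correct and follows essentially the same route as the paper's: the reverse implication via $\int_0^1\|\nabla f(w_\theta)\|^\alpha d\theta\le\max_{\theta\in[0,1]}\|\nabla f(w_\theta)\|^\alpha$, and the forward implication via uniform subdivision of the segment and passage to the limit of the upper Darboux sums of $h(\theta)=L_0+L_1\|\nabla f(w_\theta)\|^\alpha$. The only difference is that you explicitly justify the continuity of $h$ (via local boundedness and hence local Lipschitzness of $\nabla f$ extracted from \eqref{P}), a point the paper simply asserts; this is a welcome addition rather than a deviation.
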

Lemma \ref{lemma:int} provides an equivalent definition of $f\in\mathcal{L}_{\text{asym}}^*(1)$ which is sometimes more convenient to use than Definition \ref{def:P}, for example, in the proof in Section \ref{supp:proof_H}.

\begin{proof}
	Eq. \eqref{int} directly implies eq. \eqref{P} (i.e., $f\in\mathcal{L}_{\text{sym}}^*(\alpha)$) since
	\begin{align}
		\int_0^1\|\nabla f(w_{\theta})\|^{\alpha}d\theta\le \max_{\theta\in[0,1]}\|\nabla f(w_{\theta})\|^{\alpha}. \nonumber
	\end{align}
	Then it remains to prove eq. \eqref{int} given eq. \eqref{P}. 
	For any $n\in\mathbb{N}^+$, we have
	\begin{align}
		&\|\nabla f(w')-\nabla f(w)\|\nonumber\\
		&\le \sum_{k=0}^{n-1} \|\nabla f(w_{(k+1)/n})-\nabla f(w_{k/n})\|\nonumber\\
		&\overset{(i)}{\le} \sum_{k=0}^{n-1}\|w_{(k+1)/n}-w_{k/n}\| \Big(L_0+L_1\max_{\theta\in[0,1]}\|\nabla f(w_{\theta(k+1)/n+(1-\theta)k/n})\|^{\alpha}\Big)\nonumber\\
		&\overset{(ii)}{=} \|w'-w\| \sum_{k=0}^{n-1} \frac{1}{n}\max_{\theta\in[k/n,(k+1)/n]}h(\theta)\nonumber
	\end{align}
	where (i) uses eq. \eqref{P} with $w, w'$ replaced by $w_{k/n},w_{(k+1)/n}$ respectively ($w_{k/n}$ and $w_{(k+1)/n}$ denote $w_{\theta}$ with $\theta=k/n$ and $\theta=(k+1)/n$ respectively)  and (ii) denotes $h(\theta):=L_0+L_1\|\nabla f(w_{\theta})\|^{\alpha}$. Since $h(\cdot)$ is continuous, letting $n\to +\infty$ in the above inequality proves eq. \eqref{int} as follows.
	\begin{align}
		\|\nabla f(w')-\nabla f(w)\|\le \|w'-w\|\int_0^1 h(\theta)d\theta =\Big(L_0+L_1\int_0^1\|\nabla f(w_{\theta})\|^{\alpha}d\theta\Big)\|w'-w\|\nonumber
	\end{align}
\end{proof}

\begin{lemma}\label{lemma:avg_int}
	$f\in\mathbb{E}\mathcal{L}_{\text{sym}}^*(\alpha)$ if and only if for any $w,w\in\mathbb{R}^d$,
	\begin{align}
		\mathbb{E}_{\xi}\|\nabla f_{\xi}(w')-\nabla f_{\xi}(w)\|^2\le \|w'-w\|^2\mathbb{E}_{\xi}\int_{0}^1 \big(L_0 + L_1\|\nabla f_{\xi}(w_{\theta})\|^{\alpha}\big)^2d\theta\label{avg_int}
	\end{align}
	where $w_{\theta}:=\theta w'+(1-\theta)w$. 
\end{lemma}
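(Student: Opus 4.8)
\textbf{Proof proposal for Lemma \ref{lemma:avg_int}.}

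The plan is to mimic the proof of Lemma \ref{lemma:int}, but work with the squared gradient differences and the expectation over $\xi$ throughout. The key observation is that both directions reduce to a statement about a fixed, continuous integrand, so a Riemann-sum / dyadic-subdivision argument combined with the triangle inequality (in its $L^2$ form) will do the job.

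For the easy direction, suppose $f\in\mathbb{E}\mathcal{L}_{\text{sym}}^*(\alpha)$, i.e.\ \eqref{avgP} holds. Then I would simply bound the maximum over $\theta\in[0,1]$ of $\|\nabla f_{\xi}(w_{\theta})\|^{\alpha}$ from below by the integral: for each fixed $\xi$,
\begin{align*}
	\int_0^1 \big(L_0+L_1\|\nabla f_{\xi}(w_{\theta})\|^{\alpha}\big)^2\, d\theta \le \big(L_0+L_1\max_{\theta\in[0,1]}\|\nabla f_{\xi}(w_{\theta})\|^{\alpha}\big)^2,
\end{align*}
since the integrand is at most its own supremum. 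Taking $\mathbb{E}_{\xi}$ of both sides, multiplying by $\|w'-w\|^2$, and chaining with \eqref{avgP} gives \eqref{avg_int} immediately. (Note this is exactly parallel to how \eqref{int} follows from \eqref{P}, only squared and under an expectation.)

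For the converse, assume \eqref{avg_int} and fix $w,w'$. Write $w_s:=w_\theta$ with $\theta=s$, and set the (measurable) integrand $g_{\xi}(\theta):=\big(L_0+L_1\|\nabla f_{\xi}(w_{\theta})\|^{\alpha}\big)^2$. I would subdivide the segment into $n$ equal pieces with nodes $\theta=k/n$ and apply the $L^2$ triangle inequality (Minkowski in $L^2(\mathbb{P}_\xi)$):
\begin{align*}
	\Big(\mathbb{E}_\xi\|\nabla f_{\xi}(w')-\nabla f_{\xi}(w)\|^2\Big)^{1/2} \le \sum_{k=0}^{n-1}\Big(\mathbb{E}_\xi\|\nabla f_{\xi}(w_{(k+1)/n})-\nabla f_{\xi}(w_{k/n})\|^2\Big)^{1/2}.
\end{align*}
To each summand I apply \eqref{avg_int} with $(w,w')$ replaced by the endpoints of the $k$-th subinterval; this produces $\|w'-w\|/n$ times the square root of an expected integral over $[k/n,(k+1)/n]$ of $g_\xi$. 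Squaring and summing requires a little care because of the cross terms — so instead I would keep things in the square-root ("norm") form and use $\sum_k(\cdots)^{1/2}\le n^{1/2}\big(\sum_k(\cdots)\big)^{1/2}$ by Cauchy–Schwarz, or alternatively just pass to the supremum form. Either way, as $n\to\infty$ the Riemann sum $\frac1n\sum_k \sup_{[k/n,(k+1)/n]}(\cdot)$ or $\frac1n\sum_k g_\xi(k/n)$ converges to $\int_0^1$, and Fubini/monotone or dominated convergence lets me move $\mathbb{E}_\xi$ through the limit, yielding \eqref{avgP}.

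The main obstacle I anticipate is the bookkeeping in the converse direction: the squared $L^2$ triangle inequality naturally produces a sum of square roots, and converting that back into the single-term bound \eqref{avgP} (with $\max_\theta$ rather than an $n$-fold sum) needs either a Cauchy–Schwarz step that costs a spurious factor which must then be absorbed into the limit, or a switch to working with $\sup_{[k/n,(k+1)/n]}$ of the integrand and using uniform continuity of $\theta\mapsto \nabla f_\xi(w_\theta)$ on the compact segment. A clean route is to prove it for the $\sup$-form first (giving back exactly \eqref{avgP}), exactly as Lemma \ref{lemma:int} does in the non-stochastic case, and check that the interchange of $\mathbb{E}_\xi$ with the $n\to\infty$ limit is justified because the partial sums are monotone / dominated by the finite quantity on the right-hand side of \eqref{avg_int}. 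The differentiability of each $f_\xi$ (hence continuity of $\theta\mapsto\nabla f_\xi(w_\theta)$) is what makes the Riemann sums converge pointwise in $\xi$.
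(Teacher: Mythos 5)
There is a genuine logical error in your proposal: you have the two directions of the equivalence swapped. Since $\int_0^1 \big(L_0+L_1\|\nabla f_{\xi}(w_{\theta})\|^{\alpha}\big)^2 d\theta \le \big(L_0+L_1\max_{\theta\in[0,1]}\|\nabla f_{\xi}(w_{\theta})\|^{\alpha}\big)^2$, the integral bound \eqref{avg_int} is the \emph{stronger} statement; hence the implication \eqref{avg_int} $\Rightarrow$ \eqref{avgP} is the immediate one. Your ``easy direction'' tries to go the other way: from \eqref{avgP} you get $\mathrm{LHS}\le \|w'-w\|^2\mathbb{E}_\xi(\max\cdots)^2$, and combining this with ``integral $\le$ max'' gives two upper bounds by the \emph{same larger} quantity — it does not let you conclude $\mathrm{LHS}\le\|w'-w\|^2\mathbb{E}_\xi\int(\cdots)^2$. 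That chaining is invalid, and \eqref{avgP} $\Rightarrow$ \eqref{avg_int} is in fact the nontrivial direction requiring the subdivision argument. Correspondingly, your ``converse'' applies the subdivision machinery to the hypothesis \eqref{avg_int}, where (after Cauchy--Schwarz over the $n$ pieces) it merely reproduces \eqref{avg_int} and then reaches \eqref{avgP} only via the trivial ``integral $\le$ max'' step — i.e., all the machinery there is doing no work.

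The correct nontrivial argument (which the paper gives) starts from \eqref{avgP}: write $\nabla f_\xi(w_\theta)-\nabla f_\xi(w)$ as a telescoping sum over $n$ equal subintervals, apply Jensen's inequality $\|\sum_{k}v_k\|^2\le n\sum_k\|v_k\|^2$, and then apply \eqref{avgP} on each subinterval. The factor $n$ from Jensen is exactly absorbed by the $1/n^2$ coming from the squared subinterval lengths, leaving $\theta^2\|w'-w\|^2\,\mathbb{E}_\xi\sum_k \frac1n \max_{u\in[k/n,(k+1)/n]}\big(L_0+L_1\|\nabla f_\xi(w_{\theta u})\|^\alpha\big)^2$, and by continuity of $u\mapsto\nabla f_\xi(w_{\theta u})$ this Riemann sum of local maxima converges to the integral as $n\to\infty$, yielding \eqref{avg_int}. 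So the ``spurious factor'' you worry about in the bookkeeping is not the obstacle; the obstacle is that your proposal never runs the subdivision argument in the direction where it is actually needed.
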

Lemma \ref{lemma:avg_int} provides an equivalent definition of $f\in\mathbb{E}\mathcal{L}_{\text{asym}}^*(1)$ which is sometimes more convenient to use than Definition \ref{def:avgP}.
\begin{proof}
	It sufficies to prove the equivalence between eqs. \eqref{avg_int} \& \eqref{avgP}. 
	
	Eq. \eqref{avg_int} directly implies eq. \eqref{avgP} since 
	\begin{align}
		\int_0^1\!\big(L_0\!+\!L_1\|\nabla f_{\xi}(w_{\theta})\|^{\alpha}\big)^2d\theta\le \max_{\theta\in[0,1]}\big(L_0\!+\!L_1\|\nabla f_{\xi}(w_{\theta})\|^{\alpha}\big)^2= \big(L_0\!+\!L_1\max_{\theta\in[0,1]}\|\nabla f_{\xi}(w_{\theta})\|^{\alpha}\big)^2.\nonumber
	\end{align}
	Then it remains to prove eq. \eqref{avg_int} given eq. \eqref{avgP}. For any $w,w'\in\mathbb{R}^d$, denote $w_{\theta}:=\theta w'+(1-\theta)w$. Then for any $\theta\in[0,1]$ and $n\in\mathbb{N}^+$, we have
	\begin{align}
		&\mathbb{E}_{\xi}\|\nabla f_{\xi}(w_{\theta})-\nabla f_{\xi}(w)\|^2\nonumber\\
		&=\mathbb{E}_{\xi}\Big\|\sum_{k=0}^{n-1} \big(\nabla f_{\xi}(w_{\theta(k+1)/n})-\nabla f_{\xi}(w_{\theta k/n})\big)\Big\|^2\nonumber\\
		&\overset{(i)}{\le} n\sum_{k=0}^{n-1}\mathbb{E}_{\xi}\big\|\nabla f_{\xi}(w_{\theta(k+1)/n})-\nabla f_{\xi}(w_{\theta k/n})\big\|^2\nonumber\\
		&\overset{(ii)}{\le}n \sum_{k=0}^{n-1} \|w_{\theta(k+1)/n}-w_{\theta k/n}\|^2\mathbb{E}_{\xi}\Big(L_0+L_1\max_{\theta'\in[0,1]}\|\nabla f_{\xi}\big(\theta'w_{\theta(k+1)/n}+(1-\theta')w_{\theta k/n}\big)\|^{\alpha}\Big)^2\nonumber\\
		&=\theta^2\|w'-w\|^2\mathbb{E}_{\xi}\sum_{k=0}^{n-1} \frac{1}{n}\max_{\theta'\in[0,1]}\big(L_0+L_1\|\nabla f_{\xi}\big(w_{\theta'\theta(k+1)/n+(1-\theta')\theta k/n}\big)\|^{\alpha}\big)^2\nonumber\\
		&\overset{(iii)}{=} \theta^2\|w'-w\|^2\mathbb{E}_{\xi}\sum_{k=0}^{n-1} \frac{1}{n}\max_{u\in[k/n,(k+1)/n]}h(u)\nonumber
	\end{align}
	where (i) applies Jensen' inequality to the convex function $\|\cdot\|^2$, (ii) uses eq. \eqref{avgP}, and (iii) denotes $h(u):=\big(L_0+L_1\|\nabla f_{\xi}(w_{\theta u})\|^{\alpha}\big)^2$. Since $h$ is a continuous function, letting $n\to +\infty$ in the above inequality yields that
	\begin{align}
		\mathbb{E}_{\xi}\|\nabla f_{\xi}(w_{\theta})-\nabla f_{\xi}(w)\|^2&\le \theta^2\|w'-w\|^2\mathbb{E}_{\xi}\int_0^1 h(u)du \le \theta^2\|w'-w\|^2\mathbb{E}_{\xi}\int_0^1 \big(L_0+L_1\|\nabla f_{\xi}(w_{\theta u})\|^{\alpha}\big)^2du. \label{theta_int_avg}
	\end{align}
	Substituting $\theta=1$ into the above inequality proves eq. \eqref{avg_int}. 
\end{proof}


\begin{lemma}\label{lemma:moment}
	Under Assumption \ref{assum:var}, the stochastic gradient $\nabla f_{\xi}(w)$ and true gradient $\nabla f(w)$ satisfy the following inequalities for any $\tau\in[0,2]$,
	\begin{align}
		&\mathbb{E}_{\xi\sim\mathbb{P}}\|\nabla f_{\xi}(w)\|^{\tau}\le (\Gamma^{\tau}+1)\|\nabla f(w)\|^{\tau}+\Lambda^{\tau}.\label{ESG2}
	\end{align}
\end{lemma}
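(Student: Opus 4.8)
\textbf{Proof proposal for Lemma~\ref{lemma:moment}.}
The plan is to derive the $\tau$-th moment bound from the second-moment variance bound \eqref{ESG} by a combination of Jensen's inequality (to pass from the $\tau$-th moment to a power of the second moment, valid since $\tau\le 2$) and the elementary superadditivity/subadditivity inequalities for the map $s\mapsto s^{\tau/2}$ on $[0,\infty)$. First I would write $\nabla f_\xi(w) = \nabla f(w) + \big(\nabla f_\xi(w) - \nabla f(w)\big)$ and apply the triangle inequality together with the scalar inequality $(a+b)^\tau \le a^\tau + b^\tau$ for $a,b\ge 0$ and $\tau\in[0,1]$, or $(a+b)^\tau \le 2^{\tau-1}(a^\tau+b^\tau)$ for $\tau\in[1,2]$; actually, to cover the whole range $\tau\in[0,2]$ uniformly it is cleanest to instead bound $\|\nabla f_\xi(w)\|^\tau \le \big(\|\nabla f(w)\| + \|\nabla f_\xi(w)-\nabla f(w)\|\big)^\tau$ and then handle the two regimes, or simply use $\|a+b\|^2 \le 2\|a\|^2 + 2\|b\|^2$ first and then raise to the power $\tau/2 \le 1$, using subadditivity of $t\mapsto t^{\tau/2}$.

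Concretely, the main chain I would carry out is: $\mathbb{E}_\xi\|\nabla f_\xi(w)\|^\tau \le \mathbb{E}_\xi\big(\|\nabla f(w)\|^2 + \|\nabla f_\xi(w)-\nabla f(w)\|^2\big)^{\tau/2}\cdot 2^{\tau/2}$? — no, let me be more careful: from $\|\nabla f_\xi(w)\|^2 \le 2\|\nabla f(w)\|^2 + 2\|\nabla f_\xi(w)-\nabla f(w)\|^2$ I would not get clean constants. Instead, the cleaner route is: by Jensen (concavity of $t\mapsto t^{\tau/2}$ on $[0,\infty)$ since $\tau/2\le 1$), $\mathbb{E}_\xi\|\nabla f_\xi(w)\|^\tau = \mathbb{E}_\xi\big(\|\nabla f_\xi(w)\|^2\big)^{\tau/2} \le \big(\mathbb{E}_\xi\|\nabla f_\xi(w)\|^2\big)^{\tau/2}$. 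Then expand $\mathbb{E}_\xi\|\nabla f_\xi(w)\|^2 = \|\nabla f(w)\|^2 + \mathbb{E}_\xi\|\nabla f_\xi(w)-\nabla f(w)\|^2$ (using unbiasedness, so the cross term vanishes), and apply \eqref{ESG} to get $\mathbb{E}_\xi\|\nabla f_\xi(w)\|^2 \le (\Gamma^2+1)\|\nabla f(w)\|^2 + \Lambda^2$. Finally raise to the power $\tau/2$ and use subadditivity $(a+b)^{\tau/2}\le a^{\tau/2} + b^{\tau/2}$ (valid for $a,b\ge0$, $\tau/2\in[0,1]$) to split into $\big((\Gamma^2+1)\|\nabla f(w)\|^2\big)^{\tau/2} + (\Lambda^2)^{\tau/2} = (\Gamma^2+1)^{\tau/2}\|\nabla f(w)\|^\tau + \Lambda^\tau$, and bound $(\Gamma^2+1)^{\tau/2}\le \Gamma^\tau + 1$ by the same subadditivity inequality applied to $a=\Gamma^2$, $b=1$.

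There is essentially no hard part here — the only thing requiring a moment's care is the edge case $\tau=0$, where $0^0=1$ is used by the paper's global convention, and the degenerate case $\nabla f(w)=0$; both are handled automatically by the inequalities above. The proof is a short concatenation of Jensen's inequality, unbiasedness to kill the cross term, the hypothesis \eqref{ESG}, and the scalar fact that $t\mapsto t^{p}$ is subadditive on $[0,\infty)$ for $p\in[0,1]$. I would present it in four or five lines of display math with annotated inequalities $(i)$ Jensen, $(ii)$ unbiasedness plus \eqref{ESG}, $(iii)$ subadditivity, rather than belaboring any step.
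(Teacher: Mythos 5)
Your proposal is correct and follows essentially the same route as the paper: Jensen's inequality for the concave map $s\mapsto s^{\tau/2}$, unbiasedness to eliminate the cross term and invoke \eqref{ESG} at $\tau=2$, and then subadditivity of $t\mapsto t^{\tau/2}$ applied twice to split the sum and to bound $(\Gamma^2+1)^{\tau/2}\le\Gamma^{\tau}+1$. The only cosmetic difference is that you spell out the second subadditivity step explicitly, which the paper compresses into a single annotated equality.
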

\begin{proof}
	First, when $\tau=2$, Assumption \ref{assum:var} implies eq. \eqref{ESG2} as follows. 
	\begin{align}
		\mathbb{E}_{\xi\sim\mathbb{P}}\|\nabla f_{\xi}(w)\|^2\overset{(i)}{=}\mathbb{E}_{\xi\sim\mathbb{P}}\|\nabla f_{\xi}(w)-\nabla f(w)\|^2+\|\nabla f(w)\|^2\le (\Gamma^{2}+1)\|\nabla f(w)\|^{2}+\Lambda^{2} \label{ESG3}
	\end{align}
	where (i) uses $f(w)=\mathbb{E}_{\xi} f_{\xi}$. Then, when $\tau\in[0,2)$, we prove  eq. \eqref{ESG2} as follows. 
	\begin{align}
		\mathbb{E}_{\xi\sim\mathbb{P}}\|\nabla f_{\xi}(w)\|^{\tau}&\overset{(i)}{\le}(\mathbb{E}_{\xi\sim\mathbb{P}}\|\nabla f_{\xi}(w)\|^2)^{\tau/2} \overset{(ii)}{\le}\big((\Gamma^2+1)\|\nabla f(w)\|^2+\Lambda^2\big)^{\tau/2}\overset{(iii)}{=}(\Gamma^{\tau}+1)\|\nabla f(w)\|^{\tau}+\Lambda^{\tau}, \nonumber
	\end{align}
	where (i) applies Jensen's inequality to the concave function $g(s)=s^{\tau/2}$, (ii) uses eq. \eqref{ESG3}, and (iii) uses the inequality that $(a+b)^{\tau/2}\le a^{\tau/2}+b^{\tau/2}$ for any $a,b\ge 0$ and $\tau/2\in[0,1]$. 
\end{proof}

Note that the only randomness of Algorithm \ref{algo: SPIDER} comes from  $S_t$, so we can consider the filtration $\mathcal{F}(S_{1:t}):=\mathcal{F}(S_1,\ldots,S_t)$ which monotonically increases with larger $t$. Then, it can be easily seen from Algorithm \ref{algo: SPIDER} that 
\begin{align}
	v_t,w_{t+1},\delta_{t+1}\in \mathcal{F}(S_{1:t}) / \mathcal{F}(S_{1:(t-1)})\label{filter}
\end{align}

\section{Proof of Theorem \ref{thm: func_class}}\label{supp: proof_func_class}
\subsection{Proof of Item \ref{item:asym}}
On one hand, $f\in\mathbf{\mathcal{L}}_{\text{asym}}^*$ means $\|\nabla f(w')-\nabla f(w)\|\le \big(L_0+L_1\|\nabla f(w')\|\big) \|w'-w\|$ for all $w,w'$, which directly implies eq. \eqref{P} with $\alpha=1$, i.e., $f\in\mathcal{L}_{\text{asym}}^*(1)$. On the other hand, we will prove item \ref{item:sym_exp} which shows that $f(w)=e^w+e^{-w},w\in\mathbb{R}$ belongs to $\mathcal{L}_{\text{sym}}^*(1)$ but not $\mathbf{\mathcal{L}}_{\text{asym}}^*$. Therefore, $\mathcal{L}_{\text{asym}}^* \subset \mathcal{L}_{\text{sym}}^*(1)$. 

\subsection{Proof of Item \ref{item:H}}\label{supp:proof_H}
Note that if a function is not twice-differentiable, it cannot belong to $\mathcal{L}_{\text{H}}^*$ but may still belong to $\mathcal{L}_{\text{sym}}^*(1)$. For example, for the function $f(w)=w|w|$ 
whose derivative $f'(w)=2|w|$ is not differentiable (so $f\notin\mathcal{L}_{\text{H}}^*$), we have $f\in\mathcal{L}\subset\mathcal{L}_{\text{sym}}^*(1)$ since $|f'(w')-f'(w)|\le 2\big||w'|-|w|\big|\le 2|w'-w|$. 

Therefore, it remains to prove for twice-differentiable functions $f$ the equivalence between 
eq. \eqref{df2} below (definition of $\mathcal{L}_{\text{H}}^*$) and eq. \eqref{int} with $\alpha=1$ (equivalent definition of $\mathcal{L}_{\text{sym}}^*(1)$).
\begin{align}
	\|\nabla^2 f(w')\|\le L_0+L_1\|\nabla f(w)\|.\label{df2}
\end{align}

Eq. \eqref{df2} implies eq. \eqref{int} as proved below.
\begin{align}
	\|\nabla f(w')-\nabla f(w)\|&=\Big\|\int_{0}^1 \nabla^2 f(w_{\theta})(w'-w)d\theta\Big\|\nonumber\\
	&\le \int_{0}^1 \|\nabla^2 f(w_{\theta})\|\|w'-w\|d\theta\nonumber\\
	&\overset{(i)}{\le}\|w'-w\|\int_{0}^1 \big(L_0 + L_1\|\nabla f(w_{\theta})\|^{\alpha}\big)d\theta \nonumber\\
	&=\|w'-w\|\!\Big(\!L_0+L_1\int_0^1\|\nabla f(w_{\theta})\|^{\alpha}d\theta\!\Big),\nonumber
\end{align}
where (i) uses eq. \eqref{df2}. Finally, it remains prove eq. \eqref{df2} given eq. \eqref{int}. 

Note that of the symmetric Hessian matrix $\nabla^2 f(w)$ has eigenvalue $\|\nabla^2 f(w)\|$ or $-\|\nabla^2 f(w)\|$. Denote $s$ as the corresponding eigenvector with $\|s\|=1$, i.e., $\nabla^2 f(w)s=\pm\|\nabla^2 f(w)\|s$. In eq. \eqref{int} , we adopt $w':=w+\theta' s$ ($\theta'\in(0,1)$), so $w_{\theta}:=\theta w'+(1-\theta)w=\theta (w+\theta' s)+(1-\theta)w=w+\theta\theta's$ and thus eq. \eqref{int} becomes
\begin{align}
	\|\nabla f(w+\theta's)-\nabla f(w)\|&\le \!\theta'\Big(\!L_0+L_1\int_0^1\|\nabla f(w+\theta\theta' s)\|^{\alpha}d\theta\!\Big)\! \label{df_mid}
\end{align}
The left side of eq. \eqref{df_mid} can be rewritten as follows.
\begin{align}
	\|\nabla f(w+\theta's)-\nabla f(w)\|&=\theta'\Big\|\int_{0}^1 \nabla^2 f\big(\theta (w+\theta's)+(1-\theta)w\big)d\theta\Big\|\nonumber\\
	&=\Big\|\int_{0}^1 \nabla^2 f(w+\theta\theta's)\theta'd\theta\Big\|\nonumber\\
	&\overset{(i)}{=}\Big\|\int_0^{\theta'}\nabla^2 f(w+us)sdu\Big\|,\label{df_left}
\end{align}
where (i) uses change of variables $u=\theta'\theta$. The right side of eq. \eqref{df_mid} can be rewritten as follows.
\begin{align}
	&\!\theta'\Big(\!L_0+L_1\int_0^1\|\nabla f(w+\theta\theta' s)\|^{\alpha}d\theta\!\Big)\overset{(i)}{=}L_0\theta'+L_1\int_0^{\theta'}\|\nabla f(w+us)\|^{\alpha}du,\label{df_right}
\end{align}
where (i) also uses change of variables $u=\theta'\theta$. Substituting eqs. \eqref{df_left} \& \eqref{df_right} into eq. \eqref{df_mid} and multiplying both sides by $1/\theta'>0$, we obtain that
\begin{align}
	\Big\|\frac{1}{\theta'}\int_0^{\theta'}\nabla^2 f(w+us)sdu\Big\|\le L_0+\frac{L_1}{\theta'}\int_0^{\theta'}\|\nabla f(w+us)\|^{\alpha}du.\nonumber
\end{align}
Letting $\theta'\to+0$ in the above inequality, we obtain eq. \eqref{df2} as follows.
\begin{align}
	\|\nabla^2 f(w)\|=\|\nabla^2 f(w)s\|\le L_0+L_1\|\nabla f(w)\|^{\alpha}.\nonumber
\end{align}

\subsection{Proof of Item \ref{item:sym_poly}}
The polynomial function $f(w)=|w|^{\frac{2-\alpha}{1-\alpha}}, w\in\mathbb{R}$ is twice-differentiable with first and second order derivatives below.
\begin{align}
	f'(w)&=\frac{2-\alpha}{1-\alpha}|w|^{\frac{1}{1-\alpha}}\text{sgn}(w), \quad f''(w)=\frac{2-\alpha}{(1-\alpha)^2}|w|^{\frac{\alpha}{1-\alpha}}.\nonumber
\end{align}
Therefore, for any $w,w'\in\mathbb{R}$, we have
\begin{align}
	|f'(w')-f'(w)|&\le |w'-w|\max_{\theta\in[0,1]} |f''(w_{\theta})|\nonumber\\
	&\le \frac{2-\alpha}{(1-\alpha)^2}|w'-w|\max_{\theta\in[0,1]}|w_{\theta}|^{\frac{\alpha}{1-\alpha}}\cdot 1 \nonumber\\
	&=\frac{2-\alpha}{(1-\alpha)^2}|w'-w|\max_{\theta\in[0,1]}\Big|\frac{1-\alpha}{2-\alpha}f'(w_{\theta})\Big|^{\alpha} \nonumber\\
	&\le \frac{(2-\alpha)^{1-\alpha}}{(1-\alpha)^{2-\alpha}}|w'-w|\max_{\theta\in[0,1]}|f'(w_{\theta})|^{\alpha}
\end{align}
where $w_{\theta}:=\theta w'+(1-\theta)w$. This verifies eq. \eqref{P} and thus proves that $f\in\mathcal{L}_{\text{sym}}(\alpha)$. 

Next, we prove that $f\not\in \mathcal{L}_{\text{sym}}^*(\widetilde{\alpha})$ for all $\widetilde{\alpha} \in (0,\alpha)$. Suppose $f\in\mathcal{L}_{\text{sym}}^*(\widetilde{\alpha})$, i.e., the following inequality holds for all $w,w'\in\mathbb{R}^d$. 
\begin{align}
	|f'(w')-f'(w)|&\le |w'-w|\big(L_0+L_1\max_{\theta\in[0,1]} |f'(w_{\theta})|^{\widetilde{\alpha}}\big),\nonumber
\end{align}
where $w_{\theta}:=\theta w'+(1-\theta)w$. Substituting $w'=0$ into the above inequality, we obtain the following inequality for all $w\in\mathbb{R}^d$.
\begin{align}
	\frac{2-\alpha}{1-\alpha}|w|^{\frac{1}{1-\alpha}}\le |w|\Big(L_0+L_1\Big(\frac{2-\alpha}{1-\alpha}|w|^{\frac{1}{1-\alpha}}\Big)^{\widetilde{\alpha}}\Big).\nonumber
\end{align}
As $|w|\to+\infty$, the left side of the above inequality is $\mathcal{O}(|w|^{\frac{1}{1-\alpha}})$ whereas the right side has strictly smaller order $\mathcal{O}(|w|^{\frac{1-\alpha+\widetilde{\alpha}}{1-\alpha}})$. Hence, the above inequality cannot hold for sufficiently large $|w|$, which means the assumption that $f\in\mathcal{L}_{\text{sym}}^*(\widetilde{\alpha})$ does not hold. 

Finally, we prove that $f\notin\mathcal{L}_{\text{asym}}^*$. Suppose  $f\in\mathcal{L}_{\text{asym}}^*$, i.e., the following inequality holds for all $w,w'\in\mathbb{R}^d$. 
\begin{align}
	|f'(w')-f'(w)|&\le |w'-w|\big(L_0+L_1|f'(w)|\big).\nonumber
\end{align}
Substituting $w=0$ into the above inequality, we obtain the following inequality for all $w'\in\mathbb{R}^d$.
\begin{align}
	\frac{2-\alpha}{1-\alpha}|w'|^{\frac{1}{1-\alpha}}\le L_0|w'|,\nonumber
\end{align}
which implies that $|w'|\le \big(\frac{L_0(1-\alpha)}{2-\alpha}\big)^{\frac{1-\alpha}{\alpha}}<+\infty$. Hence, the above inequality cannot for all sufficiently large $|w'|$,  which means the assumption that $f\in\mathcal{L}_{\text{asym}}^*$ does not hold.

\subsection{Proof of Item \ref{item:sym_exp}}

The exponential function $f(w)=e^{w}+e^{-w}, w\in\mathbb{R}$ is twice-differentiable with first and second order derivatives below.
\begin{align}
	f'(w)&=e^{w}-e^{-w}=\text{sgn}(w)\big(e^{|w|}-e^{-|w|}\big), \quad f''(w)=e^{w}+e^{-w}=e^{|w|}+e^{-|w|}.\nonumber
\end{align}

When $|w|\le 1$, $|f''(w)|\le e+e^{-1}<4$; When $|w|>1$, $|f'(w)|+4=e^{|w|}+e^{-|w|}-2e^{-|w|}+4>|f''(w)|-2e^{-1}+4>|f''(w)|$. Combining the two cases yields that $|f''(w)|<|f'(w)|+4$, which implies that $f\in\mathcal{L}_{\text{sym}}^*$. Since $f$ is twice-differentiable, we have $f\in\mathcal{L}_{\text{sym}}^*(1)$ based on item \ref{item:H} of Theorem \ref{thm: func_class}. 

Next, we prove that $f\not\in \mathcal{L}_{\text{sym}}^*(\widetilde{\alpha})$ for all $\widetilde{\alpha} \in (0,1)$. Suppose $f\in\mathcal{L}_{\text{sym}}^*(\widetilde{\alpha})$, i.e., the following inequality holds for all $w,w'\in\mathbb{R}^d$. 
\begin{align}
	|f'(w')-f'(w)|&\le |w'-w|\big(L_0+L_1\max_{\theta\in[0,1]} |f'(w_{\theta})|^{\widetilde{\alpha}}\big),\nonumber
\end{align}
where $w_{\theta}:=\theta w'+(1-\theta)w$. Substituting $w'=0$ into the above inequality, we obtain the following inequality.
\begin{align}
	e^{|w|}-e^{-|w|}\le|w|\big(L_0+L_1(e^{|w|}-e^{-|w|})^{\widetilde{\alpha}}\big),\forall w\in\mathbb{R}^d,\nonumber
\end{align}
which implies that
\begin{align}
	\frac{(e^{|w|}-e^{-|w|})^{1-\widetilde{\alpha}}}{|w|}\le\frac{L_0+L_1(e^{|w|}-e^{-|w|})^{\widetilde{\alpha}}}{(e^{|w|}-e^{-|w|})^{\widetilde{\alpha}}},\forall w\in\mathbb{R}^d/\{\zero\}.\nonumber
\end{align}
As $|w|\to+\infty$, the left side of the above inequality goes to $+\infty$ while the right sides converges to $L_1<+\infty$. Hence, the above inequality cannot hold for sufficiently large $|w|$, which means the assumption that $f\in\mathcal{L}_{\text{sym}}^*(\widetilde{\alpha})$ does not hold. 

Finally, we prove that $f\notin\mathcal{L}_{\text{asym}}^*$. Suppose  $f\in\mathcal{L}_{\text{asym}}^*$, i.e., the following inequality holds for all $w,w'\in\mathbb{R}^d$. 
\begin{align}
	|f'(w')-f'(w)|&\le |w'-w|\big(L_0+L_1|f'(w)|\big).\nonumber
\end{align}
Substituting $w=0$ into the above inequality and rearranging it, we obtain the following inequality for all $w'\in\mathbb{R}^d/\{\zero\}$.
\begin{align}
	\frac{e^{|w'|}-e^{-|w'|}}{|w'|}\le L_0,\nonumber
\end{align}
As $|w|\to+\infty$, the left side of the above inequality goes to $+\infty$, so the above inequality cannot for all sufficiently large $|w'|$, which means the assumption that $f\in\mathcal{L}_{\text{asym}}^*$ does not hold. 

\section{Proof of Proposition \ref{prop:PS_equiv}}

\subsection{Proof of Item \ref{item:df_bound_poly}}
First, we prove eq. \eqref{PS_poly_up} for $f\in\mathcal{L}_{\text{sym}}^*(\alpha)$ with $\alpha\in(0,1)$. Note that eq. \eqref{int} holds for all $w,w'\in\mathbb{R}^d$. Hence, for any $\theta'\in[0,1]$, we can replace $w'$ with $w_{\theta'}:=\theta' w'+(1-\theta')w$ in eq. \eqref{int}, so $w_{\theta}$ becomes $\theta' w_{\theta}+(1-\theta)w=\theta'\theta w'+(1-\theta'\theta)w=w_{\theta'\theta}$. Therefore, eq. \eqref{int} becomes 
\begin{align}
	\|\nabla f(w_{\theta'})-\nabla f(w)\|&\le \Big(L_0+L_1\int_0^1\|\nabla f(w_{\theta'\theta})\|^{\alpha}d\theta\Big)\|w_{\theta'}-w\|\nonumber\\
	&=\Big(L_0\theta'+L_1\int_0^1\|\nabla f(w_{\theta'\theta})\|^{\alpha}\theta'd\theta\Big)\|w'-w\|\nonumber\\
	&\overset{(i)}{=}H(\theta')\|w'-w\|\label{theta_int2}
\end{align}
where (i) denotes $H(\theta'):=L_0\theta'+L_1\int_0^1\|\nabla f(w_{\theta'\theta})\|^{\alpha}\theta'd\theta=L_0\theta'+L_1\int_0^{\theta'}\|\nabla f(w_u)\|^{\alpha}du$. Then its derivative $H'(\theta)$ can be bounded as follows,
\begin{align}
	H'(\theta')&=L_0+L_1\|\nabla f(w_{\theta'})\|^{\alpha}\nonumber\\
	&\le L_0+L_1\|\nabla f(w_{\theta'})-\nabla f(w)\|^{\alpha}+L_1\|\nabla f(w)\|^{\alpha}\nonumber\\
	&\overset{(i)}{\le} L_0+L_1\|w'-w\|^{\alpha}H(\theta')^{\alpha}+L_1\|\nabla f(w)\|^{\alpha}\label{H_ODE}\\
	&\overset{(ii)}{\le} 3L_1\Big(\frac{1}{3}\|w'-w\|H(\theta')+\frac{1}{3}\|\nabla f(w)\|+\frac{L_0^{\frac{1}{\alpha}}}{3L_1^{\frac{1}{\alpha}}}\Big)^{\alpha}.\nonumber
\end{align}
where (i) uses eq. \eqref{theta_int2} and (ii) applies Jensen's inequality to the concave function $g(x)=x^{\alpha}$. Rearranging the above inequality yields that
\begin{align}
	3^{1-\alpha}L_1(1-\alpha)\|w'-w\|&\ge (1-\alpha)\|w'-w\|\Big(\|w'-w\|H(\theta')+\|\nabla f(w)\|+\frac{L_0^{\frac{1}{\alpha}}}{L_1^{\frac{1}{\alpha}}}\Big)^{-\alpha}H'(\theta')\nonumber\\
	&=\frac{d}{d\theta'}\Big(\|w'-w\|H(\theta')+\|\nabla f(w)\|+\frac{L_0^{\frac{1}{\alpha}}}{L_1^{\frac{1}{\alpha}}}\Big)^{1-\alpha}.\nonumber
\end{align}
Integrating the above inequality over $\theta'\in[0,\theta]$ yields that
\begin{align}
	&\Big(\|w'-w\|H(\theta)+\|\nabla f(w)\|+\frac{L_0^{\frac{1}{\alpha}}}{L_1^{\frac{1}{\alpha}}}\Big)^{1-\alpha}\nonumber\\
	&\le 3^{1-\alpha}L_1(1-\alpha)\|w'-w\|\theta+\Big(\|w'-w\|H(0)+\|\nabla f(w)\|+\frac{L_0^{\frac{1}{\alpha}}}{L_1^{\frac{1}{\alpha}}}\Big)^{1-\alpha}\nonumber\\
	&\overset{(i)}{\le} 2^{\alpha}\Big(3\big(L_1(1-\alpha)\|w'-w\|\theta\big)^{\frac{1}{1-\alpha}}+\|\nabla f(w)\|+\frac{L_0^{\frac{1}{\alpha}}}{L_1^{\frac{1}{\alpha}}}\Big)^{1-\alpha}\nonumber
\end{align}
where (i) uses $H(0)=0$ and applies Jensen's inequality to the concave function $g(x)=x^{1-\alpha}$. Therefore,
\begin{align}
	\|w'-w\|H(\theta)\le 2^{\frac{\alpha}{1-\alpha}}\Big(3\big(L_1(1-\alpha)\|w'-w\|\theta\big)^{\frac{1}{1-\alpha}}+\|\nabla f(w)\|+\frac{L_0^{\frac{1}{\alpha}}}{L_1^{\frac{1}{\alpha}}}\Big)-\|\nabla f(w)\|-\frac{L_0^{\frac{1}{\alpha}}}{L_1^{\frac{1}{\alpha}}}.\nonumber
\end{align}
Substituting the above inequality into eq. \eqref{theta_int2}, we obtain that 
\begin{align}
	\|\nabla f(w_{\theta})\|&\le \|\nabla f(w)\|+\|\nabla f(w_{\theta})-\nabla f(w)\|\nonumber\\
	&\le \|\nabla f(w)\|+\|w'-w\|H(\theta)\nonumber\\
	&\le 2^{\frac{\alpha}{1-\alpha}}\Big(3\big(L_1(1-\alpha)\|w'-w\|\theta\big)^{\frac{1}{1-\alpha}}+\|\nabla f(w)\|+\frac{L_0^{\frac{1}{\alpha}}}{L_1^{\frac{1}{\alpha}}}\Big).\nonumber
\end{align}
Then, substituting the above inequality into eq. \eqref{P}, we obtain that
\begin{align}
	&\|\nabla f(w')-\nabla f(w)\|\nonumber\\
	&\le \big(L_0+L_1\max_{\theta\in[0,1]}\|\nabla f(w_{\theta})\|^{\alpha}\big) \|w'-w\| \nonumber\\
	&\le\Big(L_0+L_1\cdot 2^{\frac{\alpha^2}{1-\alpha}}\Big(3\big(L_1(1-\alpha)\|w'-w\|\big)^{\frac{1}{1-\alpha}}+\|\nabla f(w)\|+\frac{L_0^{\frac{1}{\alpha}}}{L_1^{\frac{1}{\alpha}}}\Big)^{\alpha}\Big)\|w'-w\|  \nonumber\\
	&\overset{(i)}{\le}\Big(L_0+L_1\cdot 2^{\frac{\alpha^2}{1-\alpha}}\Big(3^{\alpha}\big(L_1(1-\alpha)\|w'-w\|\big)^{\frac{\alpha}{1-\alpha}}+\|\nabla f(w)\|^{\alpha}+\frac{L_0}{L_1}\Big)\Big)\|w'-w\|  \nonumber\\
	&=\|w'-w\|\big(K_0+K_1\|\nabla f(w)\|^{\alpha}\!+\!K_2\|w'-w\|^{\frac{\alpha}{1-\alpha}}\big)\nonumber
\end{align}
where (i) uses the inequality that $(a+b+c)^{\alpha}\le a^{\alpha}+b^{\alpha}+c^{\alpha}$ for any $a,b,c\ge 0$ and $\alpha\in[0,1]$, and (ii) denotes that $K_0:=L_0\big(2^{\frac{\alpha^2}{1-\alpha}}+1\big)$, $K_1:=L_1\cdot 2^{\frac{\alpha^2}{1-\alpha}}\cdot 3^{\alpha}$, $K_2:=L_1^{\frac{1}{1-\alpha}}\cdot 2^{\frac{\alpha^2}{1-\alpha}}\cdot 3^{\alpha}(1-\alpha)^{\frac{\alpha}{1-\alpha}}$. 

Next, we prove $f\in\mathcal{L}_{\text{sym}}^*(\alpha)$ given eq. \eqref{PS_poly_up}. For any $w,w'\in\mathbb{R}^d$ and $n\in\mathbb{N}^+$, we have
\begin{align}
	&\|\nabla f(w')-\nabla f(w)\|\nonumber\\
	&\overset{(i)}{\le} \sum_{k=0}^{n-1} \|\nabla f(w_{(k+1)/n})-\nabla f(w_{k/n})\|\nonumber\\
	&\overset{(ii)}{\le} \sum_{k=0}^{n-1}\|w_{(k+1)/n}-w_{k/n}\| \big(K_0+K_1\|\nabla f(w_{k/n})\|^{\alpha}\!+\!K_2\|w_{(k+1)/n}-w_{k/n}\|^{\frac{\alpha}{1-\alpha}}\big)\nonumber\\
	&\overset{(iii)}{=} \|w'-w\| \sum_{k=0}^{n-1} \Big(\frac{1}{n}h\Big(\frac{k}{n}\Big)+\frac{1}{n}\cdot \frac{K_2}{n^{\frac{\alpha}{1-\alpha}}}\|w'-w\|^{\frac{\alpha}{1-\alpha}}\Big)\nonumber\\
	&=\|w'-w\| \Big(\frac{K_2}{n^{\frac{\alpha}{1-\alpha}}}\|w'-w\|^{\frac{\alpha}{1-\alpha}}+\sum_{k=0}^{n-1} \frac{1}{n}h\Big(\frac{k}{n}\Big)\Big),\nonumber
\end{align}
where (i) denotes $w_{\theta}:=\theta w'+(1-\theta)w$, (ii) uses eq. \eqref{PS_poly_up} with $w, w'$ replaced by $w_{k/n},w_{(k+1)/n}$ respectively and (iii) denotes $h(\theta):=K_0+K_1\|\nabla f(w_{\theta})\|^{\alpha}$. Since $h(\cdot)$ is continuous, letting $n\to +\infty$ in the above inequality proves eq. \eqref{int} as follows, which implies $f\in\mathcal{L}_{\text{sym}}^*(\alpha)$ by Lemma \ref{lemma:int}. 
\begin{align}
	&\|\nabla f(w')-\nabla f(w)\|\le \|w'-w\|\int_0^1h(\theta)d\theta=\Big(L_0+L_1\int_0^1\|\nabla f(w_{\theta})\|^{\alpha}d\theta\Big)\|w'-w\|\nonumber
\end{align}

\subsection{Proof of Item \ref{item:df_bound_exp}}
Note that eq. \eqref{H_ODE} holds for any function $f\in\mathcal{L}_{\text{sym}}^*(\alpha)$ with $\alpha\in[0,1]$. Substituting $\alpha=1$ into eq. \eqref{H_ODE}, we obtain that
\begin{align}
	H'(\theta)&\le L_0+L_1\|w'-w\|H(\theta)+L_1\|\nabla f(w)\|,\nonumber
\end{align}
where $H(\theta'):=L_0\theta'+L_1\int_0^{\theta'}\|\nabla f(w_u)\|du$. Rearranging the above inequality yields that
\begin{align}
	L_1\|w'-w\|&\ge \frac{L_1\|w'-w\|H'(\theta')}{L_0+L_1\|w'-w\|H(\theta')+L_1\|\nabla f(w)\|}= \frac{d}{d\theta'} \ln\big(L_0+L_1\|w'-w\|H(\theta')+L_1\|\nabla f(w)\|\big).\nonumber
\end{align}
Integrating the above inequality over $\theta'\in[0,\theta]$ yields that (note that $H(0)=0$)
\begin{align}
	\ln\big(L_0+L_1\|w'-w\|H(\theta)+L_1\|\nabla f(w)\|\big)&\le \ln\big(L_0+L_1\|\nabla f(w)\|\big)+L_1\|w'-w\|,\nonumber
\end{align}
which implies that
\begin{align}
	L_1\|w'-w\|H(\theta)\le \big(L_0+L_1\|\nabla f(w)\|\big)\exp\big(L_1\|w'-w\|\big)-L_0-L_1\|\nabla f(w)\|. \nonumber
\end{align}
Substituting the above inequality and $\alpha=1$ into eq. \eqref{theta_int2}, we obtain that 
\begin{align}
	\|\nabla f(w_{\theta})\|&\le \|\nabla f(w)\|+\|\nabla f(w_{\theta})-\nabla f(w)\|\nonumber\\
	&\le \|\nabla f(w)\|+\|w'-w\|H(\theta)\nonumber\\
	&\le \|\nabla f(w)\|+\frac{1}{L_1}\Big(\big(L_0+L_1\|\nabla f(w)\|\big)\exp\big(L_1\|w'-w\|\big)-L_0-L_1\|\nabla f(w)\|\Big)\nonumber\\
	&=\Big(\frac{L_0}{L_1}+\|\nabla f(w)\|\Big)\exp\big(L_1\|w'-w\|\big)-\frac{L_0}{L_1}\nonumber
\end{align}
Then, substituting the above inequality and $\alpha=1$ into eq. \eqref{P}, we prove eq. \eqref{PS_exp_up} as follows.
\begin{align}
	&\|\nabla f(w')-\nabla f(w)\|\nonumber\\
	&\le \big(L_0+L_1\max_{\theta\in[0,1]}\|\nabla f(w_{\theta})\|\big) \|w'-w\| \nonumber\\
	&\le\big(L_0+L_1\|\nabla f(w)\|\big)\exp\big(L_1\|w'-w\|\big)\|w'-w\|  \nonumber
\end{align}


Next, we prove $f\in\mathcal{L}_{\text{sym}}^*(\alpha)$ given eq. \eqref{PS_exp_up}. For any $w,w'\in\mathbb{R}^d$ and $n\in\mathbb{N}^+$, we have
\begin{align}
	&\|\nabla f(w')-\nabla f(w)\|\nonumber\\
	&\overset{(i)}{\le} \sum_{k=0}^{n-1} \|\nabla f(w_{(k+1)/n})-\nabla f(w_{k/n})\|\nonumber\\
	&\overset{(ii)}{\le} \sum_{k=0}^{n-1}\|w_{(k+1)/n}-w_{k/n}\|\big(L_0+L_1\|\nabla f(w_{k/n})\|\big)\exp\big(L_1\|w_{(k+1)/n}-w_{k/n}\|\big)\nonumber\\
	&\overset{(iii)}{=} \|w'-w\| \sum_{k=0}^{n-1} \frac{1}{n}h\Big(\frac{k}{n}\Big)\exp\Big(\frac{L_1}{n}\|w'-w\|\Big)\nonumber\\
	&= \|w'-w\| \sum_{k=0}^{n-1} \frac{1}{n}h\Big(\frac{k}{n}\Big)+\|w'-w\| \sum_{k=0}^{n-1} \frac{1}{n}h\Big(\frac{k}{n}\Big)\Big[\exp\Big(\frac{L_1}{n}\|w'-w\|\Big)-1\Big]\nonumber\\
	&\le \|w'-w\| \sum_{k=0}^{n-1} \frac{1}{n}h\Big(\frac{k}{n}\Big)+\|w'-w\| \max_{\theta\in[0,1]}h(\theta)\Big[\exp\Big(\frac{L_1}{n}\|w'-w\|\Big)-1\Big]\nonumber
\end{align}
where (i) denotes $w_{\theta}:=\theta w'+(1-\theta)w$, (ii) uses eq. \eqref{PS_exp_up} with $w, w'$ replaced by $w_{k/n},w_{(k+1)/n}$ respectively and (iii) denotes $h(\theta):=L_0+L_1\|\nabla f(w_{\theta})\|$. Since $h(\cdot)$ is continuous, letting $n\to +\infty$ in the above inequality proves eq. \eqref{int} with $\alpha=1$ as follows, which implies $f\in\mathcal{L}_{\text{sym}}^*(1)$ by Lemma \ref{lemma:int}. 
\begin{align}
	&\|\nabla f(w')-\nabla f(w)\|\le \|w'-w\|\int_0^1h(\theta)d\theta=\Big(L_0+L_1\int_0^1\|\nabla f(w_{\theta})\|d\theta\Big)\|w'-w\|\nonumber
\end{align}

\subsection{Proof of Item \ref{item:fup_poly}}
Since $f\in \mathcal{L}_{\text{sym}}^*(\alpha)$ for $\alpha\in (0,1)$, eq. \eqref{PS_poly_up} holds based on item \ref{item:df_bound_poly} of Proposition \ref{prop:PS_equiv}. Hence, for any $w,w'\in\mathbb{R}^d$, we prove eq. \eqref{PS_poly_fup} as follows.
\begin{align}
	&f(w')-f(w)-\nabla f(w)^{\top}(w'-w)\nonumber\\
	&=\int_0^1 \big(\nabla f(w_{\theta})-\nabla f(w)\big)^{\top}(w'-w) d\theta\nonumber\\
	&\le\int_0^1 \|\nabla f(w_{\theta})-\nabla f(w)\| \|w'-w\| d\theta\nonumber\\
	&\overset{(i)}{\le}\int_0^1 \|w_{\theta}-w\|\big(K_0+K_1\|\nabla f(w)\|^{\alpha}\!+\!K_2\|w_{\theta}-w\|^{\frac{\alpha}{1-\alpha}}\big) \|w'-w\| d\theta\nonumber\\
	&=\int_0^1 \theta\|w'-w\|^2\big(K_0+K_1\|\nabla f(w)\|^{\alpha}\!+\!K_2\theta^{\frac{\alpha}{1-\alpha}}\|w'-w\|^{\frac{\alpha}{1-\alpha}}\big)d\theta\nonumber\\
	&=\frac{1}{2}\|w'-w\|^2\big(K_0+K_1\|\nabla f(w)\|^{\alpha}\big)+K_2\|w'-w\|^{\frac{2-\alpha}{1-\alpha}}\int_0^1\theta^{\frac{1}{1-\alpha}}d\theta\nonumber\\
	&\le\frac{1}{2}\|w'-w\|^2\big(K_0+K_1\|\nabla f(w)\|^{\alpha}+2K_2\|w'-w\|^{\frac{\alpha}{1-\alpha}}\big),\nonumber
\end{align}
where (i) uses eq. \eqref{PS_poly_up} with $w'$ replaced by $w_{\theta}:=\theta w'+(1-\theta)w$.

\subsection{Proof of Item \ref{item:fup_exp}}
Since $f\in \mathcal{L}_{\text{sym}}^*(\alpha)$ for $\alpha\in (0,1)$, eq. \eqref{PS_exp_up} holds based on item \ref{item:df_bound_exp} of Proposition \ref{prop:PS_equiv}. Hence, for any $w,w'\in\mathbb{R}^d$, we prove eq. \eqref{PS_poly_fup} as follows.
\begin{align}
	&f(w')-f(w)-\nabla f(w)^{\top}(w'-w)\nonumber\\
	&=\int_0^1 \big(\nabla f(w_{\theta})-\nabla f(w)\big)^{\top}(w'-w) d\theta\nonumber\\
	&\le\int_0^1 \|\nabla f(w_{\theta})-\nabla f(w)\| \|w'-w\| d\theta\nonumber\\
	&\overset{(i)}{\le}\int_0^1 \|w_{\theta}-w\|\big(L_0+L_1\|\nabla f(w)\|\big)\exp\big(L_1\|w_{\theta}-w\|\big)\|w'-w\| d\theta\nonumber\\
	&\le\int_0^1 \theta\|w'-w\|^2\big(L_0+L_1\|\nabla f(w)\|\big)\exp\big(L_1\|w'-w\|\big)d\theta\nonumber\\
	&=\frac{1}{2}\|w'-w\|^2\big(L_0+L_1\|\nabla f(w)\|\big)\exp\big(L_1\|w'-w\|\big),\nonumber
\end{align}
where (i) uses eq. \eqref{PS_exp_up} with $w'$ replaced by $w_{\theta}:=\theta w'+(1-\theta)w$. 





\section{Proof of Proposition \ref{thm_phase} and Proposition \ref
	{prop:eg_avg}}\label{supp:DRO_phase}

\subsection{Proof for Phase Retrieval Problem}

The objective function \eqref{obj_phase} of phase retrieval problem can be rewritten in the stochastic form $f(z)=\mathbb{E}_{\xi} f_{\xi}(z)$ where $\xi$ is obtained from $\{1,2,\ldots,m\}$ uniformly at random and
\begin{align}
	f_{\xi}(z):=\frac{1}{2} (y_{\xi} - |a_{\xi}^{\top}z|^2)^2.\nonumber
\end{align}

To prove that $f\in\mathcal{L}_{\text{sym}}^*(\frac{2}{3})$ and $f\in\mathbb{E}\mathcal{L}_{\text{sym}}^*(\frac{2}{3})$ respectively required by Proposition \ref{thm_phase} and Proposition \ref
{prop:eg_avg}, it suffices to prove that $f_{\xi}\in\mathcal{L}_{\text{sym}}^*(\frac{2}{3})$ for every sample $\xi$.

For any $z\in\mathbb{R}^d$ and sample $\xi$, the gradient $\nabla f_{\xi}(z)=\frac{1}{2} (|a_{\xi}^{\top}z|^2-y_{\xi})(a_{\xi}a_{\xi}^{\top})z$ satisfies
\begin{align}
	\|\nabla f_{\xi}(z)\|^{\frac{2}{3}}&=\frac{1}{2^{\frac{2}{3}}}\big\|(|a_{\xi}^{\top}z|^2-y_{\xi})(a_{\xi}a_{\xi}^{\top})z\big\|^{\frac{2}{3}}\nonumber\\
	&\ge\frac{1}{2} \big||a_{\xi}^{\top}z|^3-y_{\xi}|a_{\xi}^{\top}z|\big|^{\frac{2}{3}}\|a_{\xi}\|^{\frac{2}{3}}\nonumber\\
	&\overset{(ii)}{\ge} \frac{1}{2}\big(|a_{\xi}^{\top}z|^2-|y_{\xi}||a_{\xi}^{\top}z|^{\frac{2}{3}}\big)\big|\|a_{\xi}\|^{\frac{2}{3}}\nonumber\\
	&\overset{(iii)}{\ge} \frac{1}{3}\big(|a_{\xi}^{\top}z|^2-|y_{\xi}|^{\frac{3}{2}}\big)\big|\|a_{\xi}\|^{\frac{2}{3}}\label{DRO_2b3}
\end{align}
where (i) applies Jensen's inequality, (ii) uses the inequality that $|a-b|^{\frac{2}{3}}\ge |a|^{\frac{2}{3}}-|b|^{\frac{2}{3}}$ for any $a,b\in\mathbb{R}$, (iii) uses $|y_{\xi}|a^{\frac{2}{3}}\le \frac{1}{3}a^2+\frac{2}{3}|y_{\xi}|^{\frac{3}{2}}$ for any $a\ge 0$ based on Young's inequality. 

For any $z,z'\in\mathbb{R}^d$, we obtain the following inequality which proves that $f_{\xi}\in\mathcal{L}_{\text{sym}}^*(\frac{2}{3})$ as desired. 
\begin{align}
	&\|\nabla f_{\xi}(z')-\nabla f_{\xi}(z)\|\nonumber\\
	&=\frac{1}{2}\big\|(|a_{\xi}^{\top}z'|^2-y_{\xi})(a_{\xi}a_{\xi}^{\top})z'-(|a_{\xi}^{\top}z|^2-y_{\xi})(a_{\xi}a_{\xi}^{\top})z\big\|\nonumber\\
	&\le \frac{1}{4}\big\|(|a_{\xi}^{\top}z'|^2+|a_{\xi}^{\top}z|^2-2y_{\xi})(a_{\xi}a_{\xi}^{\top})(z'-z)+(|a_{\xi}^{\top}z'|^2-|a_{\xi}^{\top}z|^2)(a_{\xi}a_{\xi}^{\top})(z'+z)\big\|\nonumber\\
	&\overset{(i)}{\le} \frac{1}{4}\|a_{\xi}\|^2\big(|a_{\xi}^{\top}z'|^2+|a_{\xi}^{\top}z|^2+2|y_{\xi}|\big)\|z'-z\| +\frac{1}{4}\|a_{\xi}\|^2(|a_{\xi}^{\top}z'|+|a_{\xi}^{\top}z|)^2\|z'-z\|\nonumber\\
	&\overset{(ii)}{\le} \frac{1}{4}\|z'-z\|\|a_{\xi}\|^2\big(3|a_{\xi}^{\top}z'|^2+3|a_{\xi}^{\top}z|^2+2|y_{\xi}|\big)\nonumber\\
	&\le \frac{1}{4}\|z'-z\|\|a_{\xi}\|^{\frac{4}{3}}\|a_{\xi}\|^{\frac{2}{3}}\big(3|a_{\xi}^{\top}z'|^2+3|a_{\xi}^{\top}z|^2-3|y_{\xi}|-3|y_{\xi}|+8|y_{\xi}|\big)\nonumber\\
	&\overset{(iii)}{\le} \|z'-z\|\Big(\frac{9}{4}a_{\max}^{\frac{4}{3}}\|\nabla f_{\xi}(z')\|^{\frac{2}{3}}+\frac{9}{4}a_{\max}^{\frac{4}{3}}\|\nabla f_{\xi}(z)\|^{\frac{2}{3}}+2y_{\max}a_{\max}^2\Big)\nonumber\\
	&\le \|z'-z\|\Big(\frac{9}{4}a_{\max}^{\frac{4}{3}}\max_{\theta\in[0,1]}\big\|\nabla f_{\xi}\big(\theta z'+(1-\theta)z\big)\big\|^{\frac{2}{3}}+2y_{\max}a_{\max}^2\Big)
\end{align}
where (i) uses trianagular inequality, $\|a_{\xi}a_{\xi}^{\top}\|=\|a_{\xi}\|^2$, $|y_{\xi}|\le 1$ and the following inequality, (ii) uses $(|a_{\xi}^{\top}z'|+|a_{\xi}^{\top}z|)^2\le 2|a_{\xi}^{\top}z'|^2+2|a_{\xi}^{\top}z|^2$, (iii) uses eq. \eqref{DRO_2b3} and denotes that $y_{\max}:=\max_{1\le r\le m}|y_r|$ and that $a_{\max}:=\max_{1\le r\le m}\|a_r\|$. 
\begin{align}
	\big||a_{\xi}^{\top}z'|^2\!-\!|a_{\xi}^{\top}z|^2\big|\!=\!(|a_{\xi}^{\top}z'|\!+\!|a_{\xi}^{\top}z|)(|a_{\xi}^{\top}z'|\!-\!|a_{\xi}^{\top}z|)\!\le\! (|a_{\xi}^{\top}z'|\!+\!|a_{\xi}^{\top}z|)\|a_{\xi}^{\top}(z'-z)\|\!\le\!\|a_{\xi}^{\top}\|(|a_{\xi}^{\top}z'|\!+\!|a_{\xi}^{\top}z|)\|z'\!-\!z\|.\nonumber
\end{align}



\subsection{Proof for DRO Problem}
We adopt the following assumptions from \cite{jin2021non}: 
\begin{itemize}
	\item $\ell_{\xi}$ is $G$-Lipschitz continuous and $L$-smooth. 
	\item $\mathbb{E}\big(\ell_{\xi}(x)-\ell(x)\big)^2\le\sigma^2$ where $\ell(x):=\mathbb{E}\ell_{\xi}(x)$
	\item $\psi$ is a non-negative convex function with $\psi(1)=0$ and $\psi(t)=+\infty$ for all $t<0$, and $\psi^*$ is $M$-smooth.
\end{itemize}

Then we rewrite the objective function \eqref{DRO2} as $L(x, \eta)=\mathbb{E}L_{\xi}(x, \eta)$ where
\begin{align}
	L_{\xi}(x, \eta):= \lambda \psi^*\left(\frac{\ell_{\xi}(x)-\eta}{\lambda}\right)+\eta. \label{DRO_xi}
\end{align}
The gradient $\nabla L_{\xi}=\big[\nabla_x  L_{\xi};\frac{\partial}{\partial \eta} L_{\xi}\big]$ can be computed as follows.

To prove that $L\in\mathbb{E}\mathcal{L}_{\text{sym}}^*(1)$ required by Proposition \ref
{prop:eg_avg}, it suffices to prove that $L_{\xi}\in\mathcal{L}_{\text{sym}}^*(1)$ for every sample $\xi$. 
\begin{align}
	&\nabla_x  L_{\xi}(x,\eta)={\psi^*}'\Big(\frac{\ell_{\xi}(x)-\eta}{\lambda}\Big)\nabla\ell_{\xi}(x),\label{Lhat_dx}\\
	&\frac{\partial}{\partial \eta} L_{\xi}(x,\eta)=1-{\psi^*}'\Big(\frac{\ell_{\xi}(x)-\eta}{\lambda}\Big).\label{Lhat_deta}
\end{align}
Hence, for any $(x', \eta'), (x, \eta)\in\mathbb{R}^d\times\mathbb{R}$, $\nabla  L_{\xi}(x', \eta')-\nabla  L_{\xi}(x, \eta)=A+B$ where
\begin{align}
	&A=\Big[{\psi^*}'\Big(\frac{\ell_{\xi}(x)-\eta}{\lambda}\Big)\big(\nabla\ell_{\xi}(x')-\nabla\ell_{\xi}(x)\big);0\Big] \nonumber\\
	&B=\Big[{\psi^*}'\Big(\frac{\ell_{\xi}(x')-\eta'}{\lambda}\Big)\!-\!{\psi^*}'\Big(\frac{\ell_{\xi}(x)-\eta}{\lambda}\Big)\Big]\big[\nabla\ell_{\xi}(x');-1\big].\nonumber
\end{align}
Therefore, we can prove that $L_{\xi}\in\mathcal{L}_{\text{sym}}^*(1)$ as follows.
\begin{align}
	&\big\|\nabla  L_{\xi}(x', \eta')-\nabla  L_{\xi}(x, \eta)\big\|\nonumber\\
	&\le \|A\|+\|B\|\nonumber\\
	&\le \Big|{\psi^*}'\Big(\frac{\ell_{\xi}(x)-\eta}{\lambda}\Big)\Big|\big\|\nabla\ell_{\xi}(x')-\nabla\ell_{\xi}(x)\big\|+\Big|{\psi^*}'\Big(\frac{\ell_{\xi}(x')-\eta'}{\lambda}\Big)\!-\!{\psi^*}'\Big(\frac{\ell_{\xi}(x)-\eta}{\lambda}\Big)\Big|\sqrt{\|\nabla\ell_{\xi}(x')\|^2+1}\nonumber\\
	&\overset{(i)}{\le} \Big|1-\frac{\partial}{\partial \eta} L_{\xi}(x,\eta)\Big|L\|x'-x\|+\frac{M}{\lambda}\big|\ell_{\xi}(x')-\eta'-\big(\ell_{\xi}(x)-\eta\big)\big|\sqrt{G^2+1} \nonumber\\
	&\overset{(ii)}{\le} \Big(L+L\Big|\frac{\partial}{\partial \eta} L_{\xi}(x,\eta)\Big|\Big)\|x'-x\|+\frac{M}{\lambda}\sqrt{G^2+1}\big[G\|x'-x\|+|\eta'-\eta|\big]\nonumber\\
	&\overset{(iii)}{\le} \Big(L+\frac{2M(G+1)^2}{\lambda}+L\|\nabla  L_{\xi}(x, \eta)\|\Big)\|(x'-x,\eta'-\eta)\|
\end{align}
where (i) uses eq. \eqref{Lhat_deta}, and the above assumptions that $\ell_{\xi}$ is $G$-Lipschitz, $L$-smooth and that $\psi^*$ is $M$-smooth, (ii) uses the above assumptions that $\ell_{\xi}$ is $G$-Lipschitz, and (iii) uses $\|x'-x\|+\|\eta'-\eta\|\le \sqrt{2}\|(x'-x,\eta'-\eta)\|$ and $\Big|\frac{\partial}{\partial \eta} L_{\xi}(x,\eta)\Big|\le\|\nabla  L_{\xi}(x, \eta)\|$.

\section{Proof of Theorem \ref{thm:GDconv}}\label{sec:GDProof}
We will first prove the following lemma which will be used in the proof of Theorem \ref{thm:GDconv}.
\begin{lemma}\label{lemma:young}
	For any $x\ge 0$, $C\in [0,1]$, $\Delta>0$ and $0\le \omega\le \omega'$ such that $\Delta\ge \omega'-\omega$, the following inequality holds
	\begin{align}\label{young}
		Cx^{\omega}\le x^{\omega'}+C^{\frac{\omega'}{\Delta}}
	\end{align}
\end{lemma}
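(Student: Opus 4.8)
The plan is to establish \eqref{young} by an elementary case split, using that $C^{\omega'/\Delta}\ge 0$, so that in each case it suffices to bound the left-hand side $Cx^{\omega}$ by just one of the two terms on the right. First I would clear away the trivial situations: if $C=0$ then $Cx^{\omega}=0$ and the inequality is immediate because the right-hand side is non-negative, and if $x\ge 1$ then $C\le 1$ together with $\omega\le\omega'$ gives $Cx^{\omega}\le x^{\omega}\le x^{\omega'}$. So I may assume $0<C\le 1$ and $0\le x<1$, in which regime $x^{\omega'-\omega}\le 1$.

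In this regime the proof splits according to whether $x^{\omega'-\omega}\ge C$ or $x^{\omega'-\omega}<C$. In the first case, multiplying through by $x^{\omega}\ge 0$ yields $Cx^{\omega}\le x^{\omega'-\omega}x^{\omega}=x^{\omega'}$, and we are done. In the second case, since $C\le 1$ the strict inequality forces $\omega'>\omega$, so $x<C^{1/(\omega'-\omega)}\le 1$; raising to the power $\omega\ge 0$ gives $x^{\omega}\le C^{\omega/(\omega'-\omega)}$, hence
\[
Cx^{\omega}\le C\cdot C^{\omega/(\omega'-\omega)}=C^{\omega'/(\omega'-\omega)}.
\]
It then remains to compare exponents: the hypothesis $\Delta\ge\omega'-\omega>0$ gives $\omega'/\Delta\le\omega'/(\omega'-\omega)$, and because $C\in(0,1]$ the function $t\mapsto C^{t}$ is non-increasing, so $C^{\omega'/(\omega'-\omega)}\le C^{\omega'/\Delta}$. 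Combining, $Cx^{\omega}\le C^{\omega'/\Delta}\le x^{\omega'}+C^{\omega'/\Delta}$, which finishes this case.

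The argument is short, and the only points needing care are the degenerate values $x=0$ and $\omega=0$ (handled by the convention $0^0=1$ and by checking monotonicity of $t\mapsto t^{\omega}$ on $[0,1]$ directly), and invoking the comparison $\omega'/\Delta\le\omega'/(\omega'-\omega)$ in the correct direction — it is precisely the bound $C\le 1$ that reverses the inequality when passing to powers of $C$. I do not anticipate any real obstacle; the main thing is to organize the two cases so that the hypothesis $\Delta\ge\omega'-\omega$ is used exactly where the exponent on $C$ must be lowered.
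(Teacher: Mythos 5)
Your proof is correct. It takes a genuinely more elementary route than the paper's: where the paper's main case ($\omega'>\omega>0$) invokes Young's inequality with conjugate exponents $p=\frac{\omega'}{\omega}$ and $q=\frac{\omega'}{\omega'-\omega}$ to get $Cx^{\omega}\le \frac{x^{p\omega}}{p}+\frac{C^{q}}{q}\le x^{\omega'}+C^{\frac{\omega'}{\omega'-\omega}}$, you instead split on whether $x^{\omega'-\omega}\ge C$, showing in each branch that the left-hand side is dominated by a \emph{single} term of the right-hand side ($x^{\omega'}$ in one branch, $C^{\frac{\omega'}{\omega'-\omega}}$ in the other). Both arguments converge on the same final step, namely $C^{\frac{\omega'}{\omega'-\omega}}\le C^{\frac{\omega'}{\Delta}}$ from $\Delta\ge\omega'-\omega$ and $C\in(0,1]$, which is where the hypothesis on $\Delta$ enters in either version. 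Your approach buys a self-contained argument with no appeal to Young's inequality and a marginally sharper conclusion (a $\max$ rather than a sum would already suffice); the paper's buys brevity by outsourcing the core estimate to a standard inequality and isolating the degenerate cases $\omega=0$ and $\omega=\omega'$ up front, whereas you absorb them into the two branches via the convention $0^0=1$ — which you correctly flag as the only place requiring care. No gaps.
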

\begin{proof}[Proof of Lemma \ref{lemma:young}]
	We consider three cases: $\omega=0$, $\omega'=\omega>0$ and $\omega'>\omega>0$.
	
	(Case I) When $\omega=0$, $\Delta\ge \omega'$ and $\Delta>0$ imply that $\frac{\omega'}{\Delta}\in [0,1]$, so $Cx^\omega=C\le C^{\frac{\omega'}{\Delta}}$, which implies eq. \eqref{young}. 
	
	(Case II) When $\omega'=\omega>0$, $Cx^{\omega}\le x^{\omega}=x^{\omega'}$, which implies eq. \eqref{young}. 
	
	(Case III) When $\omega'>\omega>0$, by applying Young's inequality with $p=\frac{\omega'}{\omega}>1$ and $q=\frac{\omega'}{\omega'-\omega}>1$ which satisfy $\frac{1}{p}+\frac{1}{q}=1$, we prove eq. \eqref{young} as follows.
	\begin{align}
		Cx^{\omega}\le \frac{x^{p\omega}}{p}+\frac{C^q}{q}\le x^{\omega'}+C^{\frac{\omega'}{\omega'-\omega}}\le x^{\omega'}+C^{\frac{\omega'}{\Delta}}.\nonumber
	\end{align}
\end{proof}

Now we will prove Theorem \ref{thm:GDconv}. We omit the well-known case of $\beta=\alpha=0$ where GD is applied to $L$-smooth function $f\in\mathcal{L}$. Hence, we focus on the case of $\beta>0$. We first bound $f(w_{t+1})-f(w_t)$ in two cases: $\alpha\in(0,1)$ and $\alpha=1$.

(Case I) When $\alpha\in(0,1)$, eq. \eqref{PS_poly_fup} holds for $f\in\mathcal{L}_{\text{sym}}^*(\alpha)$. Hence, we have
\begin{align}
	&f(w_{t+1})-f(w_t)\nonumber\\
	&\le\nabla f(w_t)^{\top}(w_{t+1}-w_t)+\frac{1}{2}\big(K_0+K_1\|\nabla f(w_t)\|^{\alpha}\big)\|w_{t+1}-w_t\|^2+K_2\|w_{t+1}-w_t\|^{\frac{2-\alpha}{1-\alpha}}\nonumber\\ 
	&\overset{(i)}{=}-\gamma\|\nabla f(w_t)\|^{2-\beta}+\frac{\gamma}{6}\big(3K_0\gamma\cdot\|\nabla f(w_t)\|^{2-2\beta}+3K_1\gamma\cdot\|\nabla f(w_t)\|^{2+\alpha-2\beta}+6K_2\gamma^{\frac{1}{1-\alpha}}\cdot\|\nabla f(w_t)\|^{\frac{(2-\alpha)(1-\beta)}{1-\alpha}}\big)\nonumber\\
	&\overset{(ii)}{\le}-\gamma\|\nabla f(w_t)\|^{2-\beta}+\frac{\gamma}{6}\big(3\|\nabla f(w_t)\|^{2-\beta}+(3K_0\gamma)^{\frac{2}{\beta}-1}+(3K_1\gamma)^{\frac{2}{\beta}-1}+(6K_2\gamma)^{\frac{2}{\beta}-1}\big)\nonumber\\
	&\overset{(iii)}{\le}-\frac{\gamma}{2}\|\nabla f(w_t)\|^{2-\beta}+\gamma^{\frac{2}{\beta}}(3K_0+3K_1+6K_2)^{\frac{2}{\beta}-1}\nonumber\\
	&\overset{(iv)}{\le}-\frac{\gamma}{2}\|\nabla f(w_t)\|^{2-\beta}+\frac{\gamma}{4}\epsilon^{2-\beta}\nonumber\label{fdec_poly}
\end{align}
where (i) uses the update rule $w_{t+1}=w_t-\gamma\frac{\nabla f(w_t)}{\|\nabla f(w_t)\|^{\beta}}$ of Algorithm \ref{algo: GD} ($\beta$-GD), (ii) uses $\gamma^{\frac{1}{1-\alpha}}\le 1$ and applies Lemma \ref{lemma:young} three times respectively with $x=\|\nabla f(w_t)\|$, $C=3K_0\gamma,3K_1\gamma,6K_2\gamma$ ($C\in[0,1]$ since $\gamma=\frac{\epsilon^{\beta}}{12(K_0+K_1+2K_2)+1}$ and $\epsilon\in (0,1)$), $\Delta=\beta$, $\omega=2-2\beta,2+\alpha-2\beta,\frac{(2-\alpha)(1-\beta)}{1-\alpha}$, $\omega'=2-\beta$, (iii) uses the inequality that $a^{\tau}+b^{\tau}+c^{\tau}\le (a+b+c)^{\tau}$ for $\tau=\frac{2}{\beta}-1>1$ and any $a,b,c\ge 0$, and (iv) uses $\gamma=\frac{\epsilon^{\beta}}{12(K_0+K_1+2K_2)+1}$. 

(Case II) When $\alpha=1$, we have $\beta=1$ and eq. \eqref{PS_exp_fup} holds for $f\in\mathcal{L}_{\text{sym}}^*(1)$. Hence, we have 
\begin{align}
	&f(w_{t+1})-f(w_t)\nonumber\\
	&\le\nabla f(w_t)^{\top}(w_{t+1}-w_t)+\frac{1}{2}\|w_{t+1}-w_t\|^2\big(L_0+L_1\|\nabla f(w_t)\|\big)\exp\big(L_1\|w_{t+1}-w_t\|\big)\nonumber\\ 
	&\overset{(i)}{=}-\gamma\|\nabla f(w_t)\|+\frac{\gamma^2}{2}\big(L_0+L_1\|\nabla f(w_t)\|\big)\exp(L_1\gamma)\nonumber\\
	&\overset{(ii)}{\le}-\frac{\gamma}{2}\|\nabla f(w_t)\|+L_0\gamma^2\nonumber\\
	&\overset{(iii)}{\le}-\frac{\gamma}{2}\|\nabla f(w_t)\|^{2-\beta}+\frac{\gamma}{4}\epsilon
\end{align}
where (i) uses the update rule $w_{t+1}=w_t-\gamma\frac{\nabla f(w_t)}{\|\nabla f(w_t)\|}$ of Algorithm \ref{algo: GD} ($\beta$-GD with $\beta=1$) and (ii) and (iii) use $\gamma=\frac{\epsilon}{4L_0+1}\le \frac{1}{2L_1}$. Note that eq. \eqref{fdec_poly} holds in both cases. Therefore, by telescoping eq. \eqref{fdec_poly} and rearranging it, we obtain that
\begin{align}
	\mathbb{E}_{\widetilde{T}}\|\nabla f(w_{\widetilde{T}})\|^{2-\beta}&=\frac{1}{T}\sum_{t=1}^T\|\nabla f(w_t)\|^{2-\beta}\nonumber\\
	&\le \frac{2}{T\gamma}\big(f(w_0)-f^*\big)+\frac{1}{2}\epsilon^{2-\beta}, \nonumber
\end{align}
where (i) uses $\gamma=\frac{\epsilon^{\beta}}{12(K_0+K_1+2K_2)+1}$ and $f(w_T)\ge f^*:=\min_{w\in\mathbb{R}^d}f(w)$. By applying Lyapunov inequality, the above inequality implies convergence rate \eqref{GD_rate} as follows.
\begin{align}
	\mathbb{E}_{\widetilde{T}}\|\nabla f(w_{\widetilde{T}})\|&\le\big(\mathbb{E}_{\widetilde{T}}\|\nabla f(w_{\widetilde{T}})\|^{2-\beta}\big)^{\frac{1}{2-\beta}}\nonumber\\
	&\le\Big(\frac{2}{T\gamma}\big(f(w_0)-f^*\big)+\frac{1}{2}\epsilon^{2-\beta}\Big)^{\frac{1}{2-\beta}}, \nonumber\\
	&\overset{(i)}{\le}\Big(\frac{2}{T\gamma}\Big)^{\frac{1}{2-\beta}}\big(f(w_0)-f^*\big)^{\frac{1}{2-\beta}}+\Big(\frac{1}{2}\Big)^{\frac{1}{2-\beta}}\epsilon \nonumber\\
	&\le\Big(\frac{2}{T\gamma}\Big)^{\frac{1}{2-\beta}}\big(f(w_0)-f^*\big)^{\frac{1}{2-\beta}}+\frac{1}{2}\epsilon \nonumber
\end{align}
where (i) uses $(a+b)^{\tau}\le a^{\tau}+b^{\tau}$ for $\tau=\frac{1}{2-\beta}\in[0,1]$ and any $a,b\ge 0$. 

Then, substituting $T=\frac{4}{\gamma}$ into the above convergence rate, we obtain that $\mathbb{E}_{\widetilde{T}}\|\nabla f(w_{\widetilde{T}})\|\le\epsilon$.

\section{Proof of Theorem \ref{thm: GD-diverge}}
We consider the following two cases. 

(Case I) When $\alpha\in(0,1)$, consider the convex function $f(w):=|w|^{\frac{2-\alpha}{1-\alpha}}$ with unique minimizer $w=0$ and derivative $f'(w)=\frac{2-\alpha}{1-\alpha}|w|^{\frac{1}{1-\alpha}}\text{sgn}(w)$. Based on item \ref{item:sym_poly} of Proposition \ref{thm: func_class}, $f\in\mathcal{L}_{\text{sym}}^*(\alpha)$. Applying $\beta$-GD to this function yields that
\begin{align}
	w_{t+1}&=w_t-\frac{\gamma f'(w_t)}{|f'(w_t)|^{\beta}}=w_t-\gamma\Big(\frac{2-\alpha}{1-\alpha}\Big)^{1-\beta} |w_t|^{\frac{1-\beta}{1-\alpha}}\text{sgn}(w_t).\nonumber
\end{align}
Note that $0\le\beta<\alpha<1$. Hence, if $|w_t|>C$ with constant $C:=\Big(\frac{3(1-\alpha)}{\gamma(2-\alpha)}\Big)^{\frac{1-\alpha}{\alpha-\beta}}>0$, we have $\gamma\frac{2-\alpha}{1-\alpha}|w_t|^{\frac{1-\beta}{1-\alpha}}>3|w_t|$ and thus $|w_{t+1}|>2|w_t|$. Therefore, if $|w_0|>C$, by induction we obtain that $|w_t|>2^tC$ for any $t$, and thus $|f'(w_t)|>2^{\frac{t}{1-\alpha}}C^{\frac{1}{1-\alpha}}$, $f(w_t)>2^{\frac{t(2-\alpha)}{1-\alpha}}C^{\frac{2-\alpha}{1-\alpha}}$, which means $\beta$-GD diverges. 

(Case II) When $\alpha=1$, consider the convex function $f(w):=e^w+e^{-w}$ with unique minimizer $w=0$ and derivative $f'(w):=e^w-e^{-w}=(e^{|w|}-e^{-|w|})\text{sgn}(w)$. Based on item \ref{item:sym_exp} of Proposition \ref{thm: func_class}, $f\in\mathcal{L}_{\text{sym}}^*(1)$. Applying $\beta$-GD to this function yields that
\begin{align}
	w_{t+1}&=w_t-\frac{\eta f'(w_t)}{|f'(w_t)|^{\beta}}=w_t-\eta \big(e^{|w_t|}-e^{-|w_t|}\big)^{1-\beta}\text{sgn}(w_t).
\end{align}

Since $\beta<\alpha=1$, $|w_t|^{-1}\big(e^{|w_t|}-e^{-|w_t|}\big)^{1-\beta}\to+\infty$ as $|w_t|\to +\infty$. Hence, there exists a constant $C>1$ such that $\big(e^{|w_t|}-e^{-|w_t|}\big)^{1-\beta}>3|w_t|$ for $|w_t|>C$. Therefore, $|w_{t+1}|>2|w_t|$. Therefore, if $|w_0|>C$, by induction we obtain that $|w_t|>2^tC$ for any $t$, and thus $f(w_t)>|f'(w_t)|=e^{|w_t|}-e^{-|w_t|}\ge\frac{1}{2}e^{|w_t|}>\frac{1}{2}\exp(2^t C)$, which means $\beta$-GD diverges.

\section{Proof of Proposition \ref{prop:avgsmooth}}
\subsection{Proof of Item \ref{item:avg_poly}}
First, we will prove eq. \eqref{PSavg_poly_up} given  $f\in\mathbb{E}\mathcal{L}_{\text{sym}}^*(\alpha)$. Note that eq. \eqref{theta_int_avg} holds for $f\in\mathbb{E}\mathcal{L}_{\text{sym}}^*(\alpha)$, i.e., 
\begin{align}
	\mathbb{E}_{\xi}\|\nabla f_{\xi}(w_{\theta})-\nabla f_{\xi}(w)\|^2&\le \theta^2\|w'-w\|^2\mathbb{E}_{\xi}\int_0^1 \big(L_0+L_1\|\nabla f_{\xi}(w_{\theta u})\|^{\alpha}\big)^2du\nonumber\\
	&\overset{(i)}{=} \theta\|w'-w\|^2\mathbb{E}_{\xi}\int_0^{\theta} \big(L_0+L_1\|\nabla f_{\xi}(w_{u'})\|^{\alpha}\big)^2du'\nonumber\\
	&\overset{(ii)}{\le} G(\theta)\|w'-w\|^2\label{theta_int_avg2}
\end{align}
where (i) uses change of variables $u'
=\theta u$ and (ii) denotes $G(\theta):=\mathbb{E}_{\xi}\int_0^{\theta} \big(L_0+L_1\|\nabla f_{\xi}(w_{u'})\|^{\alpha}\big)^2du'$ and uses $\theta\le 1$. Then 
\begin{align}
	G'(\theta)&=\mathbb{E}_{\xi} \big(L_0+L_1\|\nabla f_{\xi}(w_{\theta})\|^{\alpha}\big)^2\nonumber\\
	&\overset{(i)}{\le} 2L_0^2+2L_1^2\mathbb{E}_{\xi}\|\nabla f_{\xi}(w_{\theta})\|^{2\alpha}\nonumber\\
	&\overset{(ii)}{\le} 2L_0^2+4L_1^2 \mathbb{E}_{\xi}\|\nabla f_{\xi}(w)\|^{2\alpha}+4L_1^2 \mathbb{E}_{\xi}\|\nabla f_{\xi}(w_{\theta})-\nabla f_{\xi}(w)\|^{2\alpha}\nonumber\\
	&\overset{(iii)}{\le} 2L_0^2+4L_1^2 \mathbb{E}_{\xi}\|\nabla f_{\xi}(w)\|^{2\alpha}+4L_1^2 \big(\mathbb{E}_{\xi}\|\nabla f_{\xi}(w_{\theta})-\nabla f_{\xi}(w)\|^2\big)^{\alpha}\nonumber\\
	&\overset{(iv)}{\le} 2L_0^2+4L_1^2 \mathbb{E}_{\xi}\|\nabla f_{\xi}(w)\|^{2\alpha}+4L_1^2G(\theta)^{\alpha}\|w'-w\|^{2\alpha}\nonumber\\
	&\overset{(v)}{\le} 3\big(A+BG(\theta)\big)^{\alpha}, \label{G_ODE2}
\end{align}
where (i) use the inequality that $(a+b)^2\le 2a^2+2b^2$ for any $a,b\ge 0$, (ii) uses the inequality that $\|v'+v\|^{2\alpha}\le 2\|v'\|^{2\alpha}+2\|v\|^{2\alpha}$ for any $v,v'\in\mathbb{R}^d$ and $\alpha\in[0,1]$, (iii) uses Jensen's inequality that $\mathbb{E}(X^{\alpha})\le(\mathbb{E}X)^{\alpha}$ where $X=\|\nabla f_{\xi}(w_{\theta})-\nabla f_{\xi}(w)\|^2$, (iv) uses eq. \eqref{theta_int_avg2}, and (v) uses Jensen's inequality that $a^{\alpha}+b^{\alpha}+c^{\alpha}\le 3(a+b+c)^{\alpha}$ for any $a,b,c\ge 0$ and denotes that $A:=(2L_0^2)^{\frac{1}{\alpha}}+(4L_1^2\mathbb{E}_{\xi}\|\nabla f_{\xi}(w)\|^{2\alpha})^{\frac{1}{\alpha}}$, $B:=(4L_1^2)^{\frac{1}{\alpha}}\|w'-w\|^2$. When $\alpha\in(0,1)$, rearranging the above inequality yields that
\begin{align}
	3B(1-\alpha)\ge \frac{B(1-\alpha)G'(\theta)}{\big(A+BG(\theta)\big)^{\alpha}}=\frac{d}{d\theta}\big(A+BG(\theta)\big)^{1-\alpha}\nonumber
\end{align}
Integrating the above inequality over $\theta\in[0,1]$ yields that
\begin{align}
	\big(A+BG(1)\big)^{1-\alpha}&\le 3B(1-\alpha)+\big(A+BG(0)\big)^{1-\alpha}\le 3B+A^{1-\alpha}\le 2\big((3B)^{\frac{1}{1-\alpha}}+A\big)^{1-\alpha}.\nonumber
\end{align}
where (i) applies Jensen's inequality to the concave function $x^{1-\alpha}$. Rearranging the above inequality yields that
\begin{align} 
	BG(1)&\le 2^{\frac{1}{1-\alpha}}\big((3B)^{\frac{1}{1-\alpha}}+A\big)-A\nonumber\\
	&\le 6^{\frac{1}{1-\alpha}}B^{\frac{1}{1-\alpha}}+A(2^{\frac{1}{1-\alpha}}) \nonumber\\
	&\le 6^{\frac{1}{1-\alpha}}(4L_1^2)^{\frac{1}{\alpha(1-\alpha)}}\|w'-w\|^{\frac{2}{1-\alpha}}+2^{\frac{1}{1-\alpha}}(2L_0^2)^{\frac{1}{\alpha}}+2^{\frac{1}{1-\alpha}}(4L_1^2\mathbb{E}_{\xi}\|\nabla f_{\xi}(w)\|^{2\alpha})^{\frac{1}{\alpha}}.\nonumber
\end{align}
Substituting the above inequality into eq. \eqref{theta_int_avg2}, we obtain that
\begin{align}
	\mathbb{E}_{\xi}\|\nabla f_{\xi}(w_{\theta})-\nabla f_{\xi}(w)\|^2&\le  G(\theta)\|w'-w\|^2\nonumber\\
	&\le (4L_1^2)^{-\frac{1}{\alpha}}BG(1)\nonumber\\
	&\le (24L_1^2)^{\frac{1}{1-\alpha}}\|w'-w\|^{\frac{2}{1-\alpha}}+2^{\frac{1}{1-\alpha}}\Big(\frac{L_0^2}{2L_1^2}\Big)^{\frac{1}{\alpha}}+2^{\frac{1}{1-\alpha}}(\mathbb{E}_{\xi}\|\nabla f_{\xi}(w)\|^{2\alpha})^{\frac{1}{\alpha}} \label{Ediff}
\end{align}
Therefore, 
\begin{align}
	\mathbb{E}\|\nabla f_{\xi}(w_{\theta})\|^{2\alpha}&\overset{(i)}{\le} 2\mathbb{E}\|\nabla f_{\xi}(w_{\theta})-\nabla f_{\xi}(w)\|^{2\alpha}+2\mathbb{E}\|\nabla f_{\xi}(w)\|^{2\alpha}\nonumber\\
	&\overset{(ii)}{\le}2\big(\mathbb{E}\|\nabla f_{\xi}(w_{\theta})-\nabla f_{\xi}(w)\|^2\big)^{\alpha}+2\mathbb{E}\|\nabla f_{\xi}(w)\|^{2\alpha}\nonumber\\
	&\overset{(iii)}{\le} 2(24L_1^2)^{\frac{\alpha}{1-\alpha}}\|w'-w\|^{\frac{2\alpha}{1-\alpha}}+2^{\frac{\alpha}{1-\alpha}}L_0^2L_1^{-2}+\big(2^{\frac{1}{1-\alpha}}+2\big)\mathbb{E}_{\xi}\|\nabla f_{\xi}(w)\|^{2\alpha}\label{Edf2alpha}
\end{align}
where (i) uses the inequality that $\|v'+v\|^{2\alpha}\le 2\|v'\|^{2\alpha}+2\|v\|^{2\alpha}$ for any $v,v'\in\mathbb{R}^d$ and $\alpha\in(0,1)$, (ii) uses Jensen's inequality that $\mathbb{E}(X^{\alpha})\le(\mathbb{E}X)^{\alpha}$ where $X=\|\nabla f_{\xi}(w_{\theta})-\nabla f_{\xi}(w)\|^2$, and (iii) uses eq. \eqref{Ediff} and the inequality that $(a+b+c)^{\alpha}\le a^{\alpha}+b^{\alpha}+c^{\alpha}$ for any $a,b,c\ge 0$ and $\alpha\in[0,1]$. Note that \eqref{avg_int} holds for $f\in\mathbb{E}\mathcal{L}_{\text{sym}}^*$, so we have
\begin{align}
	&\mathbb{E}_{\xi}\|\nabla f_{\xi}(w')-\nabla f_{\xi}(w)\|^2\nonumber\\
	&\le\|w'-w\|^2\mathbb{E}_{\xi}\int_0^1\big(L_0+L_1\|\nabla f_{\xi}(w_{\theta})\|^{\alpha}\big)^2d\theta\nonumber\\
	&\le 2\|w'-w\|^2\int_0^1 L_0^2+L_1^2\mathbb{E}_{\xi}\|\nabla f_{\xi}(w_{\theta})\|^{2\alpha}d\theta\nonumber\\
	&\overset{(i)}{\le}\|w'-w\|^2\mathbb{E}_{\xi}\big(\overline{K}_0^2+\overline{K}_1^2\|\nabla f_{\xi}(w)\|^{2\alpha}+\overline{K}_2^2\|w'-w\|^{\frac{2\alpha}{1-\alpha}}\big)\nonumber\\
	&\overset{(ii)}{\le}\|w'-w\|^2\mathbb{E}_{\xi}\big(\overline{K}_0+\overline{K}_1\|\nabla f_{\xi}(w)\|^{\alpha}+\overline{K}_2\|w'-w\|^{\frac{\alpha}{1-\alpha}}\big)^2\nonumber
\end{align}
where (i) uses eq. \eqref{Edf2alpha} and denotes that $\overline{K}_0^2:=2^{\frac{4-2\alpha}{1-\alpha}}L_0^2\ge 2L_0^2(2^{\frac{1}{1-\alpha}}+1)$, $\overline{K}_1^2:=2^{\frac{4-2\alpha}{1-\alpha}}L_1^2\ge 2L_1^2(2^{\frac{1}{1-\alpha}}+2)$, $\overline{K}_2^2:=(25L_1^2)^{\frac{1}{1-\alpha}}\ge 4L_1^2(24L_1^2)^{\frac{\alpha}{1-\alpha}}$, and (ii) uses the inequality that $a^2+b^2+c^2\le(a+b+c)^2$ for any $a,b,c\ge 0$. This proves eq. \eqref{PSavg_poly_up}. 

Then, it remains to prove that $f\in\mathbb{E}\mathcal{L}_{\text{sym}}^*(\alpha)$ given eq. \eqref{PSavg_poly_up}. Then for any $w,w'\in\mathbb{R}^d$ and $n\in\mathbb{N}^+$, we have
\begin{align}
	&\mathbb{E}_{\xi}\|\nabla f_{\xi}(w')-\nabla f_{\xi}(w)\|^2\nonumber\\
	&=\mathbb{E}_{\xi}\Big\|\sum_{k=0}^{n-1} \big(\nabla f_{\xi}(w_{(k+1)/n})-\nabla f_{\xi}(w_{k/n})\big)\Big\|^2\nonumber\\
	&\overset{(i)}{\le} n\sum_{k=0}^{n-1}\mathbb{E}_{\xi}\big\|\nabla f_{\xi}(w_{(k+1)/n})-\nabla f_{\xi}(w_{k/n})\big\|^2\nonumber\\
	&\overset{(ii)}{\le}n \sum_{k=0}^{n-1} \|w_{(k+1)/n}-w_{k/n}\|^2\mathbb{E}_{\xi}\big(\overline{K}_0+\overline{K}_1\|\nabla f_{\xi}(w_{k/n})\|^{\alpha}+\overline{K}_2\|w_{(k+1)/n}-w_{k/n}\|^{\frac{\alpha}{1-\alpha}}\big)^2\nonumber\\
	&=\|w'-w\|^2\mathbb{E}_{\xi}\sum_{k=0}^{n-1} \frac{1}{n}\big(\overline{K}_0+\overline{K}_1\|\nabla f_{\xi}(w_{k/n})\|^{\alpha}+\overline{K}_2n^{-\frac{\alpha}{1-\alpha}}\|w'-w\|^{\frac{\alpha}{1-\alpha}}\big)^2,\nonumber
\end{align}
where (i) applies Jensen' inequality to the convex function $\|\cdot\|^2$ and (ii) uses eq. \eqref{PSavg_poly_up}. For any $\epsilon>0$, there exists $n_0>0$ such that $\overline{K}_2n^{-\frac{\alpha}{1-\alpha}}\|w'-w\|^{\frac{\alpha}{1-\alpha}}<\epsilon$ for any $n\ge n_0$. Therefore, taking limit superior of both sides of
the above inequality, we obtain that
\begin{align}
	&\mathbb{E}_{\xi}\|\nabla f_{\xi}(w')-\nabla f_{\xi}(w)\|^2\nonumber\\
	&\le\|w'-w\|^2\mathop{\lim\sup}\limits_{n\to+\infty}\mathbb{E}_{\xi}\sum_{k=0}^{n-1} \frac{1}{n}\big(\overline{K}_0+\overline{K}_1\|\nabla f_{\xi}(w_{k/n})\|^{\alpha}+\overline{K}_2n^{-\frac{\alpha}{1-\alpha}}\|w'-w\|^{\frac{\alpha}{1-\alpha}}\big)^2\nonumber\\
	&\overset{(i)}{\le} \|w'-w\|^2 \mathbb{E}_{\xi}\mathop{\lim\sup}\limits_{n\to+\infty}\sum_{k=0}^{n-1} \frac{1}{n}\big(\overline{K}_0+\epsilon+\overline{K}_1\|\nabla f_{\xi}(w_{k/n})\|^{\alpha}\big)^2\nonumber\\
	&=\|w'-w\|^2 \mathbb{E}_{\xi}\int_0^1 \big(\overline{K}_0+\epsilon+\overline{K}_1\|\nabla f_{\xi}(w_{\theta})\|^{\alpha}\big)^2d\theta\nonumber
\end{align}
where (i) uses Fatou's lemma. Letting $\epsilon\to +0$ in the above inequality, we obtain the following inequality, which proves that $f\in\mathbb{E}\mathcal{L}_{\text{sym}}^*(\alpha)$ based on Lemma \ref{lemma:avg_int}
\begin{align}
	&\mathbb{E}_{\xi}\|\nabla f_{\xi}(w')-\nabla f_{\xi}(w)\|^2\le\|w'-w\|^2 \mathbb{E}_{\xi}\int_0^1 \big(\overline{K}_0+\overline{K}_1\|\nabla f_{\xi}(w_{\theta})\|^{\alpha}\big)^2d\theta\label{back2avg_poly}
\end{align}

\subsection{Proof of Item \ref{item:avg_exp}}
First, we will prove eq. \eqref{PSavg_exp_up} given  $f\in\mathbb{E}\mathcal{L}_{\text{sym}}^*(1)$. 
Note that eq. \eqref{G_ODE2} holds for any $f\in\mathbb{E}\mathcal{L}_{\text{sym}}^*(\alpha)$ with $\alpha\in[0,1]$. Substituting $\alpha=1$ into eq. \eqref{G_ODE2}, i.e.,
\begin{align}
	G'(\theta)\le 3A+3BG(\theta) \nonumber
\end{align}
where $G(\theta):=\mathbb{E}_{\xi}\int_0^{\theta} \big(L_0+L_1\|\nabla f_{\xi}(w_{u})\|\big)^2du$, $w_u:=uw'+(1-u)w$, $A:=2L_0^2+4L_1^2\mathbb{E}_{\xi}\|\nabla f_{\xi}(w)\|^2$ and $B:=4L_1^2\|w'-w\|^2$. Rearranging the above inequality yields that
\begin{align}
	3B\ge \frac{BG'(\theta)}{A+BG(\theta)}=\frac{d}{d\theta}\ln\big(A+BG(\theta)\big).\nonumber
\end{align}
Integrating the above inequality over $\theta\in[0,1]$, we obtain that
\begin{align}
	\ln\big(A+BG(1)\big)\le \ln\big(A+BG(0)\big)+3B=\ln A+3B.\nonumber
\end{align}
Hence, we have $BG(\theta)\le BG(1)\le A(e^{3B}-1)$. Substituting this inequality into eq. \eqref{theta_int_avg2}, we obtain that
\begin{align}
	\mathbb{E}_{\xi}\|\nabla f_{\xi}(w_{\theta})-\nabla f_{\xi}(w)\|^2 &\le G(\theta)\|w'-w\|^2 \nonumber\\
	&\le \frac{A}{4L_1^2}(e^{3B}-1)\nonumber\\
	&\le \Big(\frac{L_0^2}{2L_1^2}+\mathbb{E}_{\xi}\|\nabla f_{\xi}(w)\|^2\Big)\big(\exp(12L_1^2\|w'-w\|^2)-1\big). \label{Ediff2}
\end{align}
Therefore, 
\begin{align}
	\mathbb{E}\|\nabla f_{\xi}(w_{\theta})\|^2&\overset{(i)}{\le} 2\mathbb{E}\|\nabla f_{\xi}(w_{\theta})-\nabla f_{\xi}(w)\|^2+2\mathbb{E}\|\nabla f_{\xi}(w)\|^2\nonumber\\
	&\overset{(ii)}{\le} \Big(\frac{L_0^2}{L_1^2}+2\mathbb{E}_{\xi}\|\nabla f_{\xi}(w)\|^2\Big)\big(\exp(12L_1^2\|w'-w\|^2)-1\big)+2\mathbb{E}\|\nabla f_{\xi}(w)\|^2 \label{Edf2alpha2}
\end{align}
where (i) uses the inequality that $\|v'+v\|^2\le 2\|v'\|^2+2\|v\|^2$ for any $v,v'\in\mathbb{R}^d$ and (ii) uses eq. \eqref{Ediff2}. Note that \eqref{avg_int} holds for $f\in\mathbb{E}\mathcal{L}_{\text{sym}}^*(1)$. Hence, we prove eq. \eqref{PSavg_exp_up} as follows. 
\begin{align}
	&\mathbb{E}_{\xi}\|\nabla f_{\xi}(w')-\nabla f_{\xi}(w)\|^2\nonumber\\
	&\le\|w'-w\|^2\mathbb{E}_{\xi}\int_0^1\big(L_0+L_1\|\nabla f_{\xi}(w_{\theta})\|\big)^2d\theta\nonumber\\
	&\le 2\|w'-w\|^2\int_0^1 L_0^2+L_1^2\mathbb{E}_{\xi}\|\nabla f_{\xi}(w_{\theta})\|^2d\theta\nonumber\\
	&\overset{(i)}{\le}2\|w'-w\|^2\Big(L_0^2+(L_0^2+2L_1^2\mathbb{E}_{\xi}\|\nabla f_{\xi}(w)\|^2)\big(\exp(12L_1^2\|w'-w\|^2)-1\big)+2L_1^2\mathbb{E}\|\nabla f_{\xi}(w)\|^2\Big)\nonumber\\
	&=2\|w'-w\|^2(L_0^2+2L_1^2\mathbb{E}_{\xi}\|\nabla f_{\xi}(w)\|^2)\exp(12L_1^2\|w'-w\|^2),\nonumber
\end{align}
where (i) uses eq. \eqref{Edf2alpha2}. This proves eq. \eqref{PSavg_exp_up}. 

Finally, it remains to prove that $f\in\mathbb{E}\mathcal{L}_{\text{sym}}^*(\alpha)$ given eq. \eqref{PSavg_exp_up}. Then for any $w,w'\in\mathbb{R}^d$ and $n\in\mathbb{N}^+$, we have
\begin{align}
	&\mathbb{E}_{\xi}\|\nabla f_{\xi}(w')-\nabla f_{\xi}(w)\|^2\nonumber\\
	&=\mathbb{E}_{\xi}\Big\|\sum_{k=0}^{n-1} \big(\nabla f_{\xi}(w_{(k+1)/n})-\nabla f_{\xi}(w_{k/n})\big)\Big\|^2\nonumber\\
	&\overset{(i)}{\le} n\sum_{k=0}^{n-1}\mathbb{E}_{\xi}\big\|\nabla f_{\xi}(w_{(k+1)/n})-\nabla f_{\xi}(w_{k/n})\big\|^2\nonumber\\
	&\overset{(ii)}{\le}2n \sum_{k=0}^{n-1} \|w_{(k+1)/n}-w_{k/n}\|^2(L_0^2+2L_1^2\mathbb{E}_{\xi}\|\nabla f_{\xi}(w_{k/n})\|^2)\exp(12L_1^2\|w_{(k+1)/n}-w_{k/n}\|^2)\nonumber\\
	&=\|w'-w\|^2\sum_{k=0}^{n-1} \frac{1}{n}(L_0^2+2L_1^2\mathbb{E}_{\xi}\|\nabla f_{\xi}(w_{k/n})\|^2)\exp(12n^{-2}L_1^2\|w'-w\|^2),\nonumber
\end{align}
where (i) applies Jensen' inequality to the convex function $\|\cdot\|^2$ and (ii) uses eq. \eqref{PSavg_exp_up}. For any $\epsilon>0$, there exists $n_0>0$ such that $\exp(12n^{-2}L_1^2\|w'-w\|^2)<1+\epsilon$ for any $n\ge n_0$. Therefore, letting $n\to+\infty$ in the above inequality, we obtain that
\begin{align}
	&\mathbb{E}_{\xi}\|\nabla f_{\xi}(w')-\nabla f_{\xi}(w)\|^2\nonumber\\
	&\le(1+\epsilon)\|w'-w\|^2 \mathop{\lim\sup}\limits_{n\to+\infty}\sum_{k=0}^{n-1} \frac{1}{n}(L_0^2+2L_1^2\mathbb{E}_{\xi}\|\nabla f_{\xi}(w_{k/n})\|^2)\nonumber\\
	&=(1+\epsilon)\|w'-w\|^2 \int_0^1 (L_0^2+2L_1^2\mathbb{E}_{\xi}\|\nabla f_{\xi}(w_{\theta})\|^2) d\theta,\nonumber
\end{align}
where (i) uses Fatou's lemma. Letting $\epsilon\to +0$ in the above inequality, we obtain the following inequality, which proves that $f\in\mathbb{E}\mathcal{L}_{\text{sym}}^*(1)$ based on Lemma \ref{lemma:avg_int}
\begin{align}
	&\mathbb{E}_{\xi}\|\nabla f_{\xi}(w')-\nabla f_{\xi}(w)\|^2\le\|w'-w\|^2 \mathbb{E}_{\xi}\int_0^1 (L_0^2+2L_1^2\mathbb{E}_{\xi}\|\nabla f_{\xi}(w_{\theta})\|^2) d\theta.\label{back2avg_exp}
\end{align}

\subsection{Proof of Item \ref{item:avg2f}}
For any $f\in\mathbb{E}\mathcal{L}_{\text{sym}}^*(\alpha)$, we will prove that $f\in\mathcal{L}_{\text{sym}}^*(\alpha)$ in two cases: $\alpha\in(0,1)$ and $\alpha=1$. 

(Case I) When $\alpha\in(0,1)$, eq. \eqref{PSavg_poly_up} holds, so we have
\begin{align}
	&\|\nabla f(w')-\nabla f(w)\|=\big\|\mathbb{E}_{\xi}\big(\nabla f_{\xi}(w')-\nabla f_{\xi}(w)\big)\big\| \nonumber\\
	&\le\sqrt{\mathbb{E}_{\xi}\|\nabla f_{\xi}(w')-\nabla f_{\xi}(w)\|^2} \nonumber\\
	&\le\|w'-w\|\big(\overline{K}_0+\overline{K}_1\mathbb{E}_{\xi}\|\nabla f_{\xi}(w)\|^{\alpha}+\overline{K}_2\|w'-w\|^{\frac{\alpha}{1-\alpha}}\big)\nonumber\\
	&\overset{(i)}{\le}\|w'-w\|\big(\overline{K}_0+\overline{K}_1\Lambda^{\alpha}+\overline{K}_1(\Gamma^{\alpha}+1)\|\nabla f(w)\|^{\alpha}+\overline{K}_2\|w'-w\|^{\frac{\alpha}{1-\alpha}}\big),\nonumber
\end{align}
where (i) uses Lemma \ref{lemma:moment}. The above inequality implies that $f\in\mathcal{L}_{\text{sym}}^*(\alpha)$ based on item \ref{item:df_bound_poly} of Proposition \ref{prop:PS_equiv}.

(Case II) When $\alpha=1$, eq. \eqref{PSavg_exp_up} holds, so we have
\begin{align}
	&\|\nabla f(w')-\nabla f(w)\|=\big\|\mathbb{E}_{\xi}\big(\nabla f_{\xi}(w')-\nabla f_{\xi}(w)\big)\big\| \nonumber\\
	&\le\sqrt{\mathbb{E}_{\xi}\|\nabla f_{\xi}(w')-\nabla f_{\xi}(w)\|^2} \nonumber\\
	&\le\|w'-w\|\sqrt{2L_0^2+4L_1^2\mathbb{E}_{\xi}\|\nabla f_{\xi}(w)\|^2}\exp(6L_1^2\|w'-w\|^2)\nonumber\\
	&\overset{(i)}{\le}\|w'-w\|\sqrt{2L_0^2+4L_1^2\Lambda^2+4L_1^2(\Gamma^{2}+1)\|\nabla f(w)\|^2}\exp(6L_1^2\|w'-w\|^2)\nonumber\\
	&\overset{(ii)}{\le}\|w'-w\|\big(2L_0+2L_1\Lambda+2L_1(\Gamma+1)\|\nabla f(w)\|\big)\exp(6L_1^2\|w'-w\|^2)\nonumber
\end{align}
where (i) uses Lemma \ref{lemma:moment}, (ii) uses the inequality that $\sqrt{a+b}\le \sqrt{a}+\sqrt{b}$ for any $a,b\ge 0$. The above inequality implies that $f\in\mathcal{L}_{\text{sym}}^*(\alpha)$ based on item \ref{item:df_bound_exp} of Proposition \ref{prop:PS_equiv}.

\section{Proof of Theorem \ref{thm:SPIDER}}\label{sec:SPIDERProof}
We will first prove the following lemmas which will be used in the proof of Theorem \ref{thm:SPIDER}.
\begin{lemma}\label{lemma:df_diff_SPIDER}
	Apply SPIDER algorithm (Algorithm \ref{algo: SPIDER}) to $f\in\mathcal{L}_{\text{sym}}^*(\alpha)$ with stepsize $\gamma\le\frac{\epsilon}{2\overline{K}_0+2\overline{K}_2+2\overline{K}_1(\Lambda^{\alpha}+\Gamma^{\alpha}+1)+1}$ (when $\alpha\in(0,1)$) or $\gamma\le\frac{\epsilon}{3L_1\sqrt{\Gamma^2+1}+3\sqrt{L_0^2+2L_1^2\Lambda^2}}$ (when $\alpha=1$) ($\epsilon\in(0,1)$ is the target accuracy). Then we have,
	\begin{align}
		\mathbb{E}_{\xi\sim\mathbb{P}}\big(\|\nabla f_{\xi}({w_{t+1}}) - \nabla f_{\xi}(w_t)\|^2\big|S_{1:t}\big)\le\epsilon^2\big(1+\|\nabla f(w_t)\|^2\big),\label{df_diff_SPIDER}
	\end{align}
	where $\xi\sim\mathbb{P}$ is independent from the minibatches $S_{1:t}$.
\end{lemma}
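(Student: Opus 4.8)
The plan is to reduce the claim to a pointwise inequality and then feed it through the equivalent descriptions of the function class in Proposition~\ref{prop:avgsmooth} (which is where the hypothesis that $f=\mathbb{E}_{\xi}[f_{\xi}]$ lies in $\mathbb{E}\mathcal{L}_{\text{sym}}^*(\alpha)$, together with Assumption~\ref{assum:var}, enters). By \eqref{filter}, $w_t$ and $w_{t+1}$ are $\mathcal{F}(S_{1:t})$-measurable, so conditioning on $S_{1:t}$ freezes both iterates; since $\xi\sim\mathbb{P}$ is independent of $S_{1:t}$, the conditional expectation over $\xi$ is just the unconditional one evaluated at these frozen points, and the SPIDER update gives $\|w_{t+1}-w_t\|=\gamma$ whenever $v_t\neq\zero$ (if $v_t=\zero$ then $w_{t+1}=w_t$ and the left-hand side of \eqref{df_diff_SPIDER} is $0$). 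Hence it suffices to show that $\mathbb{E}_{\xi}\|\nabla f_{\xi}(w')-\nabla f_{\xi}(w)\|^2\le\epsilon^2(1+\|\nabla f(w)\|^2)$ for all $w,w'\in\mathbb{R}^d$ with $\|w'-w\|=\gamma$.

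For $\alpha\in(0,1)$ I would start from \eqref{PSavg_poly_up}, put $\|w'-w\|=\gamma$, and apply Lemma~\ref{lemma:moment} with $\tau=\alpha$ to replace $\mathbb{E}_{\xi}\|\nabla f_{\xi}(w)\|^{\alpha}$ by $(\Gamma^{\alpha}+1)\|\nabla f(w)\|^{\alpha}+\Lambda^{\alpha}$. The chosen stepsize forces $\gamma<1$ (its denominator exceeds $1$ and $\epsilon<1$), so $\gamma^{\alpha/(1-\alpha)}\le1$; combined with $\|\nabla f(w)\|^{\alpha}\le1+\|\nabla f(w)\|$ this collapses the bound to $\gamma^2(D+D'\|\nabla f(w)\|)^2$ with $D:=\overline{K}_0+\overline{K}_2+\overline{K}_1(\Lambda^{\alpha}+\Gamma^{\alpha}+1)$ and $D':=\overline{K}_1(\Gamma^{\alpha}+1)\le D$. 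Then $(a+bx)^2\le2a^2+2b^2x^2$ and $D'\le D$ give $2D^2\gamma^2(1+\|\nabla f(w)\|^2)$, and $\gamma\le\epsilon/(2D+1)$ makes $2D^2\gamma^2\le\epsilon^2$, finishing this case.

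For $\alpha=1$ the route is identical but uses \eqref{PSavg_exp_up}: substituting $\|w'-w\|=\gamma$ and Lemma~\ref{lemma:moment} with $\tau=2$ yields a bound of the form $2\gamma^2\exp(12L_1^2\gamma^2)\big(L_0^2+2L_1^2\Lambda^2+2L_1^2(\Gamma^2+1)\|\nabla f(w)\|^2\big)$. The extra ingredient is the exponential, which I would tame from the stepsize alone: its denominator is $\ge3L_1\sqrt{\Gamma^2+1}\ge3L_1$, so $L_1\gamma\le\epsilon/3$ and $12L_1^2\gamma^2<4/3$, whence $\exp(12L_1^2\gamma^2)$ is at most an absolute constant. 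Using $a+bx^2\le(a+b)(1+x^2)$, $a^2+b^2\le(a+b)^2$ and $\sqrt{a+b}\le\sqrt{a}+\sqrt{b}$ to repackage the bracket as a square and split the norm, one lands on $C^2\gamma^2(1+\|\nabla f(w)\|^2)$ with $C$ comparable to $L_1\sqrt{\Gamma^2+1}+\sqrt{L_0^2+2L_1^2\Lambda^2}$, and the assumed bound on $\gamma$ closes the argument.

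The step I expect to be the main obstacle is the constant bookkeeping in the $\alpha=1$ case: the exponential $\exp(12L_1^2\gamma^2)$ has to be absorbed into an explicit constant using only the stepsize restriction, and the loose factors (the $2$, the bound on $\exp$, the $\sqrt2$ hidden in $2L_1^2(\Gamma^2+1)$) then have to be funneled exactly into the denominator $3L_1\sqrt{\Gamma^2+1}+3\sqrt{L_0^2+2L_1^2\Lambda^2}$ — which is precisely why the admissible $\gamma$ is shrunk further in Theorem~\ref{thm:SPIDER}. For $\alpha\in(0,1)$, by contrast, once $\gamma<1$ is used to discard the $\gamma^{\alpha/(1-\alpha)}$ term, the rest is a short chain of elementary inequalities.
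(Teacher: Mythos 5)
Your proposal is correct and follows essentially the same route as the paper's proof: reduce to a pointwise bound via the $\mathcal{F}(S_{1:t})$-measurability of $w_t,w_{t+1}$ and $\|w_{t+1}-w_t\|=\gamma$, then apply \eqref{PSavg_poly_up} (resp.\ \eqref{PSavg_exp_up}) together with Lemma~\ref{lemma:moment} at $\tau=\alpha$ (resp.\ $\tau=2$), $\gamma^{\alpha/(1-\alpha)}\le 1$, $(a+b)^2\le 2a^2+2b^2$, $x^{2\alpha}\le 1+x^2$, and the stepsize restriction. The constant-bookkeeping worry you flag for $\alpha=1$ (absorbing $\exp(12L_1^2\gamma^2)\le e^{4/3}$ with only the factor $3$ in the denominator) is a real looseness, but it is present in the paper's own proof of this lemma in exactly the same form, and is repaired by the smaller stepsize used in Theorem~\ref{thm:SPIDER}.
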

\begin{proof}[Proof of Lemma \ref{lemma:df_diff_SPIDER}]
	Given $S_{1:t}$, $w_t, w_{t+1}$ are non-random based on eq. \eqref{filter}. Hence, eq. \eqref{PSavg_poly_up} or \eqref{PSavg_exp_up} holds respectively when $\alpha\in(0,1)$ or $\alpha=1$.  
	
	If $\alpha\in (0,1)$, eq. \eqref{df_diff_SPIDER} can be proved as follows
	\begin{align}
		&\mathbb{E}_{\xi\sim\mathbb{P}}\big(\|\nabla f_{\xi}({w_{t+1}}) - \nabla f_{\xi}(w_t)\|^2\big|S_{1:t}\big)\nonumber\\
		&\overset{(i)}{\le} \|w_{t+1}-w_t\|^2\big(\overline{K}_0+\overline{K}_1\mathbb{E}_{\xi}\big(\|\nabla f_{\xi}(w_t)\|^{\alpha}\big|S_{1:t}\big)+\overline{K}_2\|w_{t+1}-w_t\|^{\frac{\alpha}{1-\alpha}}\big)^2\nonumber\\
		&\overset{(ii)}{\le}\gamma^2\big(\overline{K}_0+\overline{K}_1\Lambda^{\alpha}+\overline{K}_1(\Gamma^{\alpha}+1)\|\nabla f(w_t)\|^{\alpha}+\overline{K}_2\big)^2\nonumber\\
		&\overset{(iii)}{\le}2\gamma^2(\overline{K}_0+\overline{K}_2+\overline{K}_1\Lambda^{\alpha})^2+2\gamma^2\overline{K}_1^2(\Gamma^{\alpha}+1)^2\cdot\|\nabla f(w_t)\|^{2\alpha}\nonumber\\
		&\le 2\gamma^2(\overline{K}_0+\overline{K}_2+\overline{K}_1\Lambda^{\alpha})^2+2\gamma^2\overline{K}_1^2(\Gamma^{\alpha}+1)^2\big(\|\nabla f(w_t)\|^2+1\big)\nonumber\\
		&\overset{(iv)}{\le}\epsilon^2\big(1+\|\nabla f(w_t)\|^2\big),\nonumber
	\end{align}
	where (i) uses eq. \eqref{PSavg_poly_up}, (ii) uses eq. \eqref{ESG2} and $\|w_{t+1}-w_t\|=\gamma\le 1$ based on Algorithm \ref{algo: SPIDER}, (iii) uses the inequality that $(a+b)^2\le 2a^2+2b^2$ for any $a,b\ge 0$, (iv) uses $\gamma\le \frac{\epsilon}{2\overline{K}_0+2\overline{K}_2+2\overline{K}_1(\Lambda^{\alpha}+\Gamma^{\alpha}+1)}\le \frac{\epsilon}{2}\big((\overline{K}_0+\overline{K}_2+\overline{K}_1\Lambda^{\alpha})^2+\overline{K}_1^2(\Gamma^{\alpha}+1)^2\big)^{-\frac{1}{2}}$.
	
	If $\alpha=1$, eq. \eqref{df_diff_SPIDER} can be proved as follows 
	\begin{align}
		&\mathbb{E}_{\xi\sim\mathbb{P}}\big(\|\nabla f_{\xi}({w_{t+1}}) - \nabla f_{\xi}(w_t)\|^2\big|S_{1:t}\big)\nonumber\\
		&\overset{(i)}{\le} 2\|w_{t+1}-w_t\|^2\cdot(L_0^2+2L_1^2\mathbb{E}_{\xi}\|\nabla f_{\xi}(w_t)\|^2)\exp(12L_1^2\|w_{t+1}-w_t\|^2)\nonumber\\
		&\overset{(ii)}{=}2\gamma^2\exp(12L_1^2\gamma^2) \big(L_0^2+2L_1^2\Lambda^2+2L_1^2(\Gamma^2+1)\|\nabla f(w_t)\|^2\big)\nonumber\\
		&\overset{(iii)}{\le}\epsilon^2\big(1+\|\nabla f(w_t)\|^2\big),\nonumber
	\end{align}
	where (i) uses eq. \eqref{PSavg_exp_up}, (ii) uses eq. \eqref{ESG2} and $\|w_{t+1}-w_t\|=\gamma\le 1$ based on Algorithm \ref{algo: SPIDER}, (iii) uses $\gamma\le\frac{\epsilon}{3L_1\sqrt{\Gamma^2+1}+3\sqrt{L_0^2+2L_1^2\Lambda^2}}\le\min\big(\frac{1}{3L_1}, \frac{\epsilon}{3L_1\sqrt{\Gamma^2+1}+3\sqrt{L_0^2+2L_1^2\Lambda^2}}\big)$.
\end{proof}

\begin{lemma}\label{lemma:vt_err} Apply SPIDER algorithm (Algorithm \ref{algo: SPIDER}) to $f\in\mathcal{L}_{\text{sym}}^*(\alpha)$ with stepsize $\gamma$ given by Lemma \ref{lemma:df_diff_SPIDER} batchsize $|S_t|=B$ when $t\mod q=0$ and $|S_t|=B'$ otherwise. Then the approximation error $\delta_t:=v_t-\nabla f(w_t)$ has the following properties conditional on minibatches $S_{1:t}:=\{S_1,\ldots,S_t\}$. 
	\begin{align}
		&\mathbb{E}\big(\delta_{t+1}\big|S_{1:t}\big)=\delta_t; \forall (t+1)~\text{mod}~q\ne 0 \label{delta_E1}\\
		&\mathbb{E}\big(\|\delta_{t+1}\|^2\big|w_{t+1}\big)\le\frac{1}{B}\big(\Gamma^2\|\nabla f(w_{t+1})\|^2+\Lambda^2\big); \forall (t+1)~\text{mod}~q=0 \label{delta_qk2}\\
		&\mathbb{E}\big(\|\delta_{t+1}\|^2\big|S_{1:t}\big)\le \|\delta_t\|^2+\frac{\epsilon^2}{B'}\big(1+\|\nabla f(w_t)\|^2\big)\label{delta_E2}
	\end{align}
	Therefore, for any $k\in\mathbb{N}$ and $s=0,1,\ldots,q-1$, we have
	\begin{align}
		\mathbb{E}\|\delta_{qk+s}\|&\le\frac{\Lambda}{\sqrt{B}}+\epsilon\sqrt{\frac{q}{B'}}+\Big(\frac{\epsilon}{\sqrt{B'}}+\frac{\Gamma}{\sqrt{B}}\Big)\sum_{u=0}^{q-1}\mathbb{E}\|\nabla f(w_{qk+u})\|.\label{vt_err}
	\end{align}
\end{lemma}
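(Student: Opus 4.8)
The three one-step statements \eqref{delta_E1}, \eqref{delta_qk2} and \eqref{delta_E2} follow by expanding the SPIDER update and using unbiasedness / the variance bound (Assumption \ref{assum:var}), the measurability facts \eqref{filter}, and Lemma \ref{lemma:df_diff_SPIDER}; the epoch estimate \eqref{vt_err} is then obtained from them by a careful induction over one epoch. The delicate point is that \eqref{delta_E2} is a recursion in \emph{squared} errors, so a naive ``take the square root, then take expectations'' argument would force the terms $\sqrt{\mathbb{E}\|\nabla f(w_{qk+u})\|^2}$ and $q/\sqrt{B'}$ into the bound, whereas \eqref{vt_err} asks for the linear term $\mathbb{E}\|\nabla f(w_{qk+u})\|$ and only $\sqrt{q/B'}$. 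The fix is to keep the recursion \emph{conditioned on the past minibatches} $S_{1:(qk+s-1)}$, under which all the gradient norms in the epoch are non-random, and to carry the induction hypothesis as a \emph{perfect square}.

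\textbf{One-step facts.} For $(t+1)\bmod q\ne 0$, write $\delta_{t+1}=\delta_t+Z_t$ where $Z_t:=\nabla f_{S_{t+1}}(w_{t+1})-\nabla f_{S_{t+1}}(w_t)-\big(\nabla f(w_{t+1})-\nabla f(w_t)\big)$. By \eqref{filter}, $w_t,w_{t+1},\delta_t\in\mathcal{F}(S_{1:t})$, while $S_{t+1}$ is independent of $S_{1:t}$ with $\mathbb{E}_\xi\nabla f_\xi=\nabla f$, so $\mathbb{E}\big(Z_t\mid S_{1:t}\big)=0$; this is \eqref{delta_E1}. Expanding $\|\delta_{t+1}\|^2=\|\delta_t\|^2+2\langle\delta_t,Z_t\rangle+\|Z_t\|^2$ and taking $\mathbb{E}(\cdot\mid S_{1:t})$ annihilates the cross term, while $Z_t$ is the mean of the size-$B'$ minibatch $S_{t+1}$ of i.i.d.\ centered vectors, so
\begin{align}
\mathbb{E}\big(\|Z_t\|^2\mid S_{1:t}\big)&=\tfrac{1}{B'}\,\mathbb{E}_\xi\big(\|\nabla f_\xi(w_{t+1})-\nabla f_\xi(w_t)-(\nabla f(w_{t+1})-\nabla f(w_t))\|^2\mid S_{1:t}\big)\nonumber\\
&\le\tfrac{1}{B'}\,\mathbb{E}_\xi\big(\|\nabla f_\xi(w_{t+1})-\nabla f_\xi(w_t)\|^2\mid S_{1:t}\big)\le\tfrac{\epsilon^2}{B'}\big(1+\|\nabla f(w_t)\|^2\big),\nonumber
\end{align}
using the ``variance $\le$ second moment'' inequality and then Lemma \ref{lemma:df_diff_SPIDER}; this gives \eqref{delta_E2}. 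For $(t+1)\bmod q=0$, $v_{t+1}=\nabla f_{S_{t+1}}(w_{t+1})$ is a size-$B$ minibatch mean, so $\delta_{t+1}$ is the average of $B$ i.i.d.\ copies of $\nabla f_\xi(w_{t+1})-\nabla f(w_{t+1})$ and Assumption \ref{assum:var} yields $\mathbb{E}\big(\|\delta_{t+1}\|^2\mid w_{t+1}\big)\le\tfrac1B(\Gamma^2\|\nabla f(w_{t+1})\|^2+\Lambda^2)$, i.e.\ \eqref{delta_qk2}.

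\textbf{Epoch induction.} Fix $k$ and set $c:=\epsilon/\sqrt{B'}$. I claim that for $s=1,\dots,q-1$,
\begin{align}
\mathbb{E}\big(\|\delta_{qk+s}\|^2\,\big|\,S_{1:(qk+s-1)}\big)\le\Big(\|\delta_{qk}\|+c\sum_{u=0}^{s-1}\|\nabla f(w_{qk+u})\|+\epsilon\sqrt{\tfrac{s}{B'}}\Big)^2.\nonumber
\end{align}
The base case $s=1$ is \eqref{delta_E2} at $t=qk$ together with completing the square. For the inductive step I apply \eqref{delta_E2} at $t=qk+s$ — allowed since $(qk+s+1)\bmod q\ne0$ for $s+1\le q-1$ — then take $\mathbb{E}(\cdot\mid S_{1:(qk+s-1)})$, using $w_{qk+s}\in\mathcal{F}(S_{1:(qk+s-1)})$ so that $\|\nabla f(w_{qk+s})\|$ comes out of the conditional expectation, and plug in the induction hypothesis; closing the step reduces to the elementary inequality $(Q+c\sqrt{s})^2+c^2+c^2g^2\le(Q+cg+c\sqrt{s+1})^2$, valid for all $Q,c,g\ge0$ (the cross-term difference is $2Qcg+2Qc(\sqrt{s+1}-\sqrt{s})+2c^2g\sqrt{s+1}\ge0$), which also gives the base case. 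Taking square roots (the right-hand side is a perfect square) and then full expectations gives, for $s=1,\dots,q-1$,
\begin{align}
\mathbb{E}\|\delta_{qk+s}\|\le\mathbb{E}\|\delta_{qk}\|+c\sum_{u=0}^{s-1}\mathbb{E}\|\nabla f(w_{qk+u})\|+\epsilon\sqrt{\tfrac{s}{B'}}.\nonumber
\end{align}
Finally, from \eqref{delta_qk2}, conditional Jensen and $\sqrt{a+b}\le\sqrt a+\sqrt b$ give $\mathbb{E}\big(\|\delta_{qk}\|\mid w_{qk}\big)\le\tfrac{\Gamma}{\sqrt B}\|\nabla f(w_{qk})\|+\tfrac{\Lambda}{\sqrt B}$, hence $\mathbb{E}\|\delta_{qk}\|\le\tfrac{\Gamma}{\sqrt B}\mathbb{E}\|\nabla f(w_{qk})\|+\tfrac{\Lambda}{\sqrt B}$, which also covers $s=0$. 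Combining the two displays, bounding $\epsilon\sqrt{s/B'}\le\epsilon\sqrt{q/B'}$ for $s\le q-1$, and absorbing the $u=0$ gradient term into the full sum $\sum_{u=0}^{q-1}$, produces exactly \eqref{vt_err}.

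\textbf{Main obstacle.} The technically delicate part is this induction: one must resist passing to a first-moment recursion too early. Keeping \eqref{delta_E2} conditioned on $S_{1:(qk+s-1)}$ so that $\|\nabla f(w_{qk})\|,\dots,\|\nabla f(w_{qk+s-1})\|$ are all deterministic, and maintaining the induction hypothesis in perfect-square form, is precisely what lets a single square-root step at the very end deliver the linear dependence on $\mathbb{E}\|\nabla f(w_{qk+u})\|$ and the $\sqrt{q/B'}$ (rather than $q/\sqrt{B'}$) scaling required by \eqref{vt_err}; the remaining manipulations are routine bookkeeping with the filtration \eqref{filter} and Assumption \ref{assum:var}.
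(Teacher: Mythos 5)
Your one-step facts \eqref{delta_E1}, \eqref{delta_qk2}, \eqref{delta_E2} are correct and proved essentially as in the paper (martingale decomposition, variance $\le$ second moment, Lemma \ref{lemma:df_diff_SPIDER}, and the measurability facts \eqref{filter}); your handling of $\mathbb{E}\|\delta_{qk}\|$ via conditional Jensen is also fine. You have also correctly identified the crux of the lemma: the $\epsilon^2 s/B'$ terms must be accumulated \emph{inside} a single square root so that only one root extraction occurs, which is what produces $\epsilon\sqrt{q/B'}$ and the linear dependence on $\mathbb{E}\|\nabla f(w_{qk+u})\|$.

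However, your epoch induction does not close, and the failure is exactly at the conditioning bookkeeping that this lemma is about. Your induction hypothesis at level $s$ is a bound on $\mathbb{E}\big(\|\delta_{qk+s}\|^2\,\big|\,S_{1:(qk+s-1)}\big)$, i.e.\ on a \emph{conditional expectation}, not on the realized random variable $\|\delta_{qk+s}\|^2$. To advance to $s+1$ you start from \eqref{delta_E2} at $t=qk+s$, whose right-hand side contains the realized $\|\delta_{qk+s}\|^2$; the only way to invoke the hypothesis is to take $\mathbb{E}(\cdot\mid S_{1:(qk+s-1)})$ of both sides, as you propose. But then the conclusion you obtain is
\begin{align}
\mathbb{E}\big(\|\delta_{qk+s+1}\|^2\,\big|\,S_{1:(qk+s-1)}\big)\le R_{s+1}^2,\nonumber
\end{align}
conditioned on $S_{1:(qk+s-1)}$, whereas the hypothesis needed at the next step is conditioned on $S_{1:(qk+s)}$; a bound on $\mathbb{E}(X\mid\mathcal{G})$ for $\mathcal{G}\subset\mathcal{H}$ does not yield a bound on $\mathbb{E}(X\mid\mathcal{H})$ even when the bounding quantity is $\mathcal{G}$-measurable. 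Concretely, the argument survives from $s=1$ to $s=2$ (because $\|\nabla f(w_{qk+1})\|$ happens to be $\mathcal{F}(S_{1:qk})$-measurable) but breaks at $s=3$: you then need to pull $\|\nabla f(w_{qk+2})\|$ out of a conditional expectation given the lagging $\sigma$-algebra $S_{1:qk}$, which is not possible, and replacing it by $\mathbb{E}(\|\nabla f(w_{qk+2})\|^2\mid\cdot)$ reintroduces precisely the squared-gradient problem you set out to avoid. The paper circumvents this by running a \emph{backward} induction on the unconditional first moment, carrying the quantity $\mathbb{E}\sqrt{\|\delta_{qk+s'}\|^2+\epsilon^2(s-s')/B'}$ (see \eqref{err_induct}): at each backward step exactly one conditional expectation, given $S_{1:(qk+s'-1)}$, is inserted inside the outer unconditional $\mathbb{E}$ and immediately resolved by conditional Jensen, after which $\tfrac{\epsilon}{\sqrt{B'}}\|\nabla f(w_{qk+s'-1})\|$ is peeled off with $\sqrt{a+b}\le\sqrt a+\sqrt b$. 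Because the outer expectation is unconditional, no measurability mismatch arises. You should restructure your induction along these lines (or otherwise repair the conditioning) before the epoch estimate \eqref{vt_err} can be considered proved.
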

\begin{proof}[Proof of Lemma \ref{lemma:vt_err}]
	We will first prove eq. \eqref{delta_qk2} when $(t+1)~\text{mod}~q=0$ and then prove eqs. \eqref{delta_E1} \& \eqref{delta_E2} when $(t+1)~\text{mod}~q\ne0$. 
	
	If $(t+1)~\text{mod}~q=0$, then $v_{t+1}=\nabla f_{S_{t+1}}(w_{t+1})$ based on Algorithm \ref{algo: SPIDER}. Hence, eq. \eqref{delta_qk2} can be proved as follows.
	\begin{align}
		\mathbb{E}\big(\|\delta_{t+1}\|^2\big|S_{1:t}\big)=&\mathbb{E}\big(\|\nabla f_{S_{t+1}}(w_{t+1})-\nabla f(w_{t+1})\|^2\big|S_{1:t}\big)\nonumber\\
		=&\frac{1}{|S_t|}\mathbb{E}_{\xi\sim\mathbb{P}}\big(\|\nabla f_{\xi}(w_{t+1})-\nabla f(w_{t+1})\|^2\big|S_{1:t}\big)\nonumber\\
		\overset{(i)}{\le}&\frac{1}{B}\big(\Gamma^2\|\nabla f(w_{t+1})\|^2+\Lambda^2\big), \nonumber
	\end{align}
	where (i) uses Assumption \ref{assum:var}.
	
	If ($t+1)~\text{mod}~q\ne0$, then $v_{t+1}=v_t+\nabla f_{S_{t+1}}(w_{t+1})-\nabla f_{S_{t+1}}(w_t)$ based on Algorithm \ref{algo: SPIDER}. Hence, eq. \eqref{delta_E1} can be proved as follows. 
	\begin{align}
		\mathbb{E}\big(\delta_{t+1}\big|S_{1:t}\big)&=\mathbb{E}\big(v_{t+1}-\nabla f(w_{t+1})\big|S_{1:t}\big)\nonumber\\
		&=\mathbb{E}\big(v_t + \nabla f_{S_{t+1}}({w_{t+1}}) - \nabla {f_{{S_{t+1}}}}(w_t)-\nabla f(w_{t+1})\big|S_{1:t}\big)\nonumber\\
		&\overset{(i)}{=}v_t - \nabla f(w_t)=\delta_t, \nonumber
	\end{align}
	where (i) uses eq. \eqref{filter}. Then eq. \eqref{delta_E2} can be proved as follows. 
	\begin{align}
		&\mathbb{E}\big(\|\delta_{t+1}\|^2\big|S_{1:t}\big)\nonumber\\
		&\overset{(i)}{=}\|\delta_t\|^2+\mathbb{E}\big(\|\delta_{t+1}-\delta_t\|^2\big|S_{1:t}\big)\nonumber\\
		&=\|\delta_t\|^2+\mathbb{E}\big(\|v_{t+1}-v_t-\nabla f(w_{t+1})+\nabla f(w_t)\|^2\big|S_{1:t}\big)\nonumber\\
		&=\|\delta_t\|^2+\mathbb{E}\big(\|\nabla f_{S_{t+1}}({w_{t+1}}) - \nabla f_{S_{t+1}}(w_t)-\nabla f(w_{t+1})+\nabla f(w_t)\|^2\big|S_{1:t}\big)\nonumber\\
		&\overset{(ii)}{=}\|\delta_t\|^2+\frac{1}{|S_{t+1}|}\mathbb{E}_{\xi\sim\mathbb{P}}\big(\|\nabla f_{\xi}({w_{t+1}}) - \nabla f_{\xi}(w_t)-\nabla f(w_{t+1})+\nabla f(w_t)\|^2\big|S_{1:t}\big)\nonumber\\
		&\overset{(iii)}{\le} \|\delta_t\|^2+\frac{1}{|S_{t+1}|}\mathbb{E}_{\xi\sim\mathbb{P}}\big(\|\nabla f_{\xi}({w_{t+1}}) - \nabla f_{\xi}(w_t)\|^2\big|S_{1:t}\big)\nonumber\\
		&\overset{(iv)}{\le} \|\delta_t\|^2+\frac{\epsilon^2}{B'}\big(1+\|\nabla f(w_t)\|^2\big),\nonumber
	\end{align}
	where (i) uses eq. \eqref{delta_E1}, (ii) uses eq. \eqref{filter} which implies that conditional on $S_{1:t}$, $S_{t+1}$ obtained from i.i.d. sampling is the only source of randomness in $\nabla f_{\xi}({w_{t+1}}) - \nabla f_{\xi}(w_t)-\nabla f(w_{t+1})+\nabla f(w_t)$, both (ii) and (iii) use $\mathbb{E}\big(\|\nabla f_{\xi}({w_{t+1}}) - \nabla f_{\xi}(w_t)-\nabla f(w_{t+1})+\nabla f(w_t)\|^2\big|S_{1:t}\big)=0$, and (iv) uses Lemma \ref{lemma:df_diff_SPIDER} and $|S_{t+1}|=B'$ (since $t+1\!\mod q\ne 0$).
	
	Next, to prove eq. \eqref{vt_err}, we will first prove the following relation for any $s,s',k\in\mathbb{N}$ such that $s'\le s\le q-1$.
	\begin{align}
		\mathbb{E}\|\delta_{qk+s}\|\le \mathbb{E}\sqrt{\|\delta_{qk+s'}\|^2+\frac{\epsilon^2(s-s')}{B'}}+\frac{\epsilon}{\sqrt{B'}}\sum_{u=s'}^{s-1}\mathbb{E}\|\nabla f(w_{qk+u})\|.\label{err_induct}
	\end{align}
	We prove eq. \eqref{err_induct} via backward induction on $s'=s,s-1,\ldots,1,0$. Note that eq. \eqref{err_induct} holds trivially for $s'=s$. Then, assume that eq. \eqref{err_induct} holds for a certain value of $s'\in[1,s]$ and we prove eq. \eqref{err_induct} for $s'-1$ as follows.
	\begin{align}
		&\mathbb{E}\|\delta_{qk+s}\|-\frac{\epsilon}{\sqrt{B'}}\sum_{u=s'}^{s-1}\mathbb{E}\|\nabla f(w_{qk+u})\|\nonumber\\
		&\overset{(i)}{\le} \mathbb{E}\mathbb{E}\Bigg(\sqrt{\|\delta_{qk+s'}\|^2+\frac{\epsilon^2(s-s')}{B'}}\Bigg|S_{1:qk+s'-1}\Bigg)\nonumber\\
		&\overset{(ii)}{\le} \mathbb{E}\sqrt{\mathbb{E}\Bigg(\|\delta_{qk+s'}\|^2+\frac{\epsilon^2(s-s')}{B'}\Bigg|S_{1:qk+s'-1}\Bigg)}\nonumber\\
		&\overset{(iii)}{\le} \mathbb{E}\sqrt{\|\delta_{qk+s'-1}\|^2+\frac{\epsilon^2}{B'}\big(1+\|\nabla f(w_{qk+s'-1})\|^2\big)+\frac{\epsilon^2(s-s')}{B'}}\nonumber\\
		&\overset{(iv)}{\le} \mathbb{E}\sqrt{\|\delta_{qk+s'-1}\|^2+\frac{\epsilon^2(s-s'+1)}{B'}}+\frac{\epsilon}{\sqrt{B'}}\mathbb{E}\|\nabla f(w_{qk+s'-1})\|,\nonumber
	\end{align}
	where (i) uses eq. \eqref{err_induct} for $s'$, (ii) applies Jensen's inequality to the concave function $\sqrt{\cdot}$, (iii) uses eq. \eqref{delta_E2}, and (iv) uses the inequality that $\sqrt{a+b}\le \sqrt{a}+\sqrt{b}$ for any $a,b\ge 0$. Substituting $s'=0$ into eq. \eqref{err_induct}, we prove eq. \eqref{vt_err} as follows. 
	\begin{align}
		\mathbb{E}\|\delta_{qk+s}\|&\le \mathbb{E}\sqrt{\|\delta_{qk}\|^2+\frac{s\epsilon^2}{B'}}+\frac{\epsilon}{\sqrt{B'}}\sum_{u=0}^{s-1}\mathbb{E}\|\nabla f(w_{qk+u})\|\nonumber\\
		&\overset{(i)}{\le}\sqrt{\mathbb{E}\|\delta_{qk}\|^2}+\epsilon\sqrt{\frac{q}{B'}}+\frac{\epsilon}{\sqrt{B'}}\sum_{u=0}^{s-1}\mathbb{E}\|\nabla f(w_{qk+u})\|\nonumber\\
		&\overset{(ii)}{\le} \frac{1}{\sqrt{B}}\big(\Gamma\|\nabla f(w_{qk})\|+\Lambda\big)+\epsilon\sqrt{\frac{q}{B'}}+\frac{\epsilon}{\sqrt{B'}}\sum_{u=0}^{s-1}\mathbb{E}\|\nabla f(w_{qk+u})\|\nonumber\\
		&\le\frac{\Lambda}{\sqrt{B}}+\epsilon\sqrt{\frac{q}{B'}}+\Big(\frac{\epsilon}{\sqrt{B'}}+\frac{\Gamma}{\sqrt{B}}\Big)\sum_{u=0}^{q-1}\mathbb{E}\|\nabla f(w_{qk+u})\|,\nonumber
	\end{align}
	where (i) uses the inequality that $\sqrt{a+b}\le \sqrt{a}+\sqrt{b}$ for any $a,b\ge 0$ and then applies Lyapunov inequality, and (ii) uses eq. \eqref{delta_qk2} and then uses $\sqrt{a+b}\le \sqrt{a}+\sqrt{b}$ for any $a,b\ge 0$. 
\end{proof}

\begin{lemma}\label{lemma:df_bound}
	Apply SPIDER algorithm (Algorithm \ref{algo: SPIDER}) to $f\in\mathcal{L}_{\text{sym}}^*(\alpha)$ with batchsize $|S_t|=B$ when $t\mod q=0$ and $|S_t|=B'$ otherwise, and stepsize stepsize $\gamma\le\frac{\epsilon}{2(\overline{K}_0+\overline{K}_1+2\overline{K}_2)+1}$ (when $\alpha\in(0,1)$) or $\gamma\le\frac{\epsilon}{5L_1+8L_0}$ (when $\alpha=1$) ($\epsilon\in(0,1)$ is the target accuracy). Then the  decrease of the function $f$ has the following bound.
	\begin{align}
		f(w_{t+1})-f(w_t)\le \frac{\gamma\epsilon}{8}-\frac{\gamma}{2}\|v_t\|-\frac{3\gamma}{8}\|\nabla f(w_t)\|+\frac{3\gamma}{2}\|v_t-\nabla f(w_t)\|.\label{vt_rate}
	\end{align}
\end{lemma}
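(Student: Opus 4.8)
The plan is to apply a descent lemma to the \emph{population} loss $f$, substitute the SPIDER update $w_{t+1}=w_t-\gamma v_t/\|v_t\|$ (so $\|w_{t+1}-w_t\|=\gamma$), and then reshape the resulting linear term through the decomposition $\nabla f(w_t)=v_t-\delta_t$ with $\delta_t:=v_t-\nabla f(w_t)$. First I would note that $f\in\mathcal{L}_{\text{sym}}^*(\alpha)$ by item~\ref{item:avg2f} of Proposition~\ref{prop:avgsmooth}, and, as established in the proof of that item, $f$ obeys a gradient-difference bound with constants $\overline{K}_0+\overline{K}_1\Lambda^{\alpha}$, $\overline{K}_1(\Gamma^{\alpha}+1)$, $\overline{K}_2$ when $\alpha\in(0,1)$ (the $\Gamma,\Lambda$-dependence entering via Lemma~\ref{lemma:moment}), respectively $2L_0+2L_1\Lambda$, $2L_1(\Gamma+1)$ together with a factor $\exp(6L_1^2\|w'-w\|^2)$ when $\alpha=1$. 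Integrating this bound over the segment $[w_t,w_{t+1}]$, exactly as in the proofs of items~\ref{item:fup_poly}--\ref{item:fup_exp} of Proposition~\ref{prop:PS_equiv}, yields for $\alpha\in(0,1)$
\begin{align}
f(w_{t+1})-f(w_t)\le -\gamma\frac{\nabla f(w_t)^{\top}v_t}{\|v_t\|}+\frac{\gamma^2}{2}\Big(\overline{K}_0+\overline{K}_1\Lambda^{\alpha}+\overline{K}_1(\Gamma^{\alpha}+1)\|\nabla f(w_t)\|^{\alpha}+2\overline{K}_2\gamma^{\frac{\alpha}{1-\alpha}}\Big),\nonumber
\end{align}
and the analogous bound with $\big(2L_0+2L_1\Lambda+2L_1(\Gamma+1)\|\nabla f(w_t)\|\big)\exp(6L_1^2\gamma^2)$ in the bracket when $\alpha=1$.

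Next I would handle the linear (inner-product) term. Using $\nabla f(w_t)=v_t-\delta_t$ gives $\nabla f(w_t)^{\top}v_t/\|v_t\|=\|v_t\|-\delta_t^{\top}v_t/\|v_t\|\ge\|v_t\|-\|\delta_t\|$, so that term is $\le-\gamma\|v_t\|+\gamma\|\delta_t\|$. I would then split $-\gamma\|v_t\|=-\tfrac{\gamma}{2}\|v_t\|-\tfrac{\gamma}{2}\|v_t\|$ and apply the reverse triangle inequality $\|v_t\|\ge\|\nabla f(w_t)\|-\|\delta_t\|$ to the second copy, so that the linear part is bounded by $-\tfrac{\gamma}{2}\|v_t\|-\tfrac{\gamma}{2}\|\nabla f(w_t)\|+\tfrac{3\gamma}{2}\|\delta_t\|$. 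This already produces the $\|v_t\|$- and $\|\delta_t\|$-terms of \eqref{vt_rate}, so it remains only to show that the quadratic/higher-order remainder is at most $\tfrac{\gamma}{8}\|\nabla f(w_t)\|+\tfrac{\gamma\epsilon}{8}$, which combined with the $-\tfrac{\gamma}{2}\|\nabla f(w_t)\|$ above gives the desired $-\tfrac{3\gamma}{8}\|\nabla f(w_t)\|$.

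For the remainder, since $\gamma\le1$ we have $\gamma^{\alpha/(1-\alpha)}\le1$, so in the case $\alpha\in(0,1)$ the only term growing with the gradient is $\tfrac{\gamma^2}{2}\overline{K}_1(\Gamma^{\alpha}+1)\|\nabla f(w_t)\|^{\alpha}=\tfrac{\gamma}{8}\big(4\gamma\overline{K}_1(\Gamma^{\alpha}+1)\big)\|\nabla f(w_t)\|^{\alpha}$. I would invoke Lemma~\ref{lemma:young} with $C=4\gamma\overline{K}_1(\Gamma^{\alpha}+1)$ (which is $\le1$, and of order $\epsilon$, for the prescribed stepsize), $x=\|\nabla f(w_t)\|$, $\omega=\alpha$, $\omega'=1$, $\Delta=1-\alpha$, obtaining $C\|\nabla f(w_t)\|^{\alpha}\le\|\nabla f(w_t)\|+C^{1/(1-\alpha)}$ with $C^{1/(1-\alpha)}$ bounded by a constant times $\epsilon$; the purely constant pieces $\tfrac{\gamma^2}{2}(\overline{K}_0+\overline{K}_1\Lambda^{\alpha}+2\overline{K}_2)$ are $\mathcal{O}(\gamma\epsilon)$-small since $\gamma=\mathcal{O}(\epsilon)$. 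For $\alpha=1$ the argument is shorter: $\exp(6L_1^2\gamma^2)$ is a universal constant once $\gamma\le1/(5L_1)$, the gradient appears only linearly, and $\gamma=\mathcal{O}(\epsilon)$ makes $\gamma^2\cdot\mathrm{const}\cdot\|\nabla f(w_t)\|\le\tfrac{\gamma}{8}\|\nabla f(w_t)\|$ with the additive constants $\le\tfrac{\gamma\epsilon}{8}$. Adding the linear and remainder bounds gives \eqref{vt_rate}; the degenerate case $v_t=0$ is covered by the convention $w_{t+1}=w_t$, for which \eqref{vt_rate} holds trivially.

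The hard part will be the $\alpha\in(0,1)$ remainder: the term $\gamma^2\|\nabla f(w_t)\|^{\alpha}$ carries an exponent strictly between the constant ($0$) and linear ($1$) exponents that appear everywhere else, so it cannot be absorbed by a plain Young/Cauchy--Schwarz step. Lemma~\ref{lemma:young} is precisely the interpolation device that trades it against $\|\nabla f(w_t)\|$ while paying only an $\mathcal{O}(\epsilon)$ additive price, and most of the remaining effort is the bookkeeping needed to make the resulting constants line up with the stated stepsize thresholds.
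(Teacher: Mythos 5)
Your proposal is correct and follows essentially the same route as the paper's proof: apply the descent lemma for $f\in\mathcal{L}_{\text{sym}}^*(\alpha)$ with $\|w_{t+1}-w_t\|=\gamma$, write $\nabla f(w_t)^{\top}v_t/\|v_t\|=\|v_t\|-\delta_t^{\top}v_t/\|v_t\|\ge\|v_t\|-\|\delta_t\|$, split $-\gamma\|v_t\|$ in half and apply the reverse triangle inequality to one copy, then absorb the quadratic remainder via the stepsize. The only cosmetic difference is in the $\gamma^2\|\nabla f(w_t)\|^{\alpha}$ term: the paper dispatches it with the one-line bound $\|\nabla f(w_t)\|^{\alpha}\le\|\nabla f(w_t)\|+1$ rather than invoking Lemma~\ref{lemma:young} (so the step you flag as ``the hard part'' is actually elementary), and your constants carry the extra $\Lambda^{\alpha},\Gamma^{\alpha}$ factors from the expected-smoothness route, which line up with the stepsize of Theorem~\ref{thm:SPIDER} rather than with the threshold stated in the lemma itself.
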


\begin{proof}[Proof of Lemma \ref{lemma:df_bound}]
	We consider two cases: $\alpha\in(0,1)$ and $\alpha=1$.
	
	(Case I) When $\alpha\in(0,1)$, eq. \eqref{PS_poly_fup} holds for $f\in\mathcal{L}_{\text{sym}}^*(\alpha)$. Hence, 
	\begin{align}
		&f(w_{t+1})-f(w_t)\nonumber\\
		&\le \nabla f(w_t)^{\top}(w_{t+1}-w_t)+\frac{1}{2}\|w_{t+1}-w_t\|^2\big(\overline{K}_0+\overline{K}_1\|\nabla f(w_t)\|^{\alpha}+2\overline{K}_2\|w_{t+1}-w_t\|^{\frac{\alpha}{1-\alpha}}\big)\nonumber\\
		&=-\frac{\gamma}{\|v_t\|}\nabla f(w_t)^{\top}v_t+\frac{\gamma^2}{2}\big(\overline{K}_0+\overline{K}_1\|\nabla f(w_t)\|^{\alpha}+2\overline{K}_2\gamma^{\frac{\alpha}{1-\alpha}}\big)\nonumber\\
		&\overset{(i)}{\le}-\frac{\gamma\big(v_t-\nabla f(w_t)\big)^{\top}v_t+\gamma\|v_t\|^2}{\|v_t\|}+\frac{\gamma^2}{2}(\overline{K}_0+\overline{K}_1+2\overline{K}_2)+\frac{\overline{K}_1\gamma^2}{2}\|\nabla f(w_t)\|\nonumber\\
		&\overset{(ii)}{\le} \gamma\|v_t-\nabla f(w_t)\|-\frac{\gamma}{2}\|v_t\|-\frac{\gamma}{2}\big(\|\nabla f(w_t)\|-\|v_t-\nabla f(w_t)\|\big)+\frac{\gamma\epsilon}{8}+\frac{\gamma\epsilon}{8}\|\nabla f(w_t)\|\nonumber\\
		&\overset{(iii)}{\le} \frac{\gamma\epsilon}{8}-\frac{\gamma}{2}\|v_t\|-\frac{3\gamma}{8}\|\nabla f(w_t)\|+\frac{3\gamma}{2}\|v_t-\nabla f(w_t)\|, \label{vt_rate_poly}
	\end{align}
	where (i) uses $\|\nabla f(w_t)\|^{\alpha}\le \|\nabla f(w_t)\|^2+1$ and $\gamma\le 1$, (ii) uses Cauchy-Schwartz inequality, $\|v_t\|\ge\|\nabla f(w_t)\|-\|v_t-\nabla f(w_t)\|$ and $\gamma\le\frac{\epsilon}{2(\overline{K}_0+\overline{K}_1+2\overline{K}_2)}$, (iii) uses $\epsilon\le 1$. 
	
	(Case II) When $\alpha=1$, we have $\beta=1$ and eq. \eqref{PS_exp_fup} holds for $f\in\mathcal{L}_{\text{sym}}^*(1)$. Hence, 
	\begin{align}
		&f(w_{t+1})-f(w_t)\nonumber\\
		&\le \nabla f(w_t)^{\top}(w_{t+1}-w_t)+\frac{1}{2}\|w_{t+1}-w_t\|^2\big(L_0+L_1\|\nabla f(w_t)\|\big)\exp\big(L_1\|w_{t+1}-w_t\|\big)\nonumber\\
		&\le -\frac{\gamma}{\|v_t\|}\nabla f(w_t)^{\top}v_t+\frac{1}{2}\gamma^2\big(L_0+L_1\|\nabla f(w_t)\|\big)\exp(L_1\gamma)\nonumber\\
		&\overset{(i)}{\le}-\frac{\gamma\big(v_t-\nabla f(w_t)\big)^{\top}v_t+\gamma\|v_t\|^2}{\|v_t\|}+L_0\gamma^2+\frac{\gamma}{8}\|\nabla f(w_t)\|\nonumber\\
		&\overset{(ii)}{\le} \gamma\|v_t-\nabla f(w_t)\|-\frac{\gamma}{2}\|v_t\|-\frac{\gamma}{2}\big(\|\nabla f(w_t)\|-\|v_t-\nabla f(w_t)\|\big)+\frac{\gamma\epsilon}{8}+\frac{\gamma}{8}\|\nabla f(w_t)\|\nonumber\\
		&\le \frac{\gamma\epsilon}{8}-\frac{\gamma}{2}\|v_t\|-\frac{3\gamma}{8}\|\nabla f(w_t)\|+\frac{3\gamma}{2}\|v_t-\nabla f(w_t)\|,\label{vt_rate_exp}
	\end{align}
	where (i) uses $\gamma\le \frac{1}{5L_1}$, (ii) uses Cauchy-Schwartz inequality, $\|v_t\|\ge\|\nabla f(w_t)\|-\|v_t-\nabla f(w_t)\|$ and $\gamma\le\frac{\epsilon}{8L_0}$.
\end{proof}

Now we will prove Theorem \ref{thm:SPIDER}. First, it can be easily verified that the choice of stepsize $\gamma$ and batchsize $|S_t|$ satisfies the requirements of Lemmas \ref{lemma:vt_err} \& \ref{lemma:df_bound}. Therefore, eq. \eqref{vt_rate} in Lemma \ref{lemma:df_bound} holds. Taking expectation of eq. \eqref{vt_rate} and telescoping over $t=0,1,\ldots,T-1$ where $T=qK$, we obtain that
\begin{align}
	&\mathbb{E}f(w_T)-\mathbb{E}f(w_0)\nonumber\\
	&\le \frac{T\gamma\epsilon}{8}-\frac{\gamma}{2}\sum_{t=0}^{qK-1}\mathbb{E}\|v_t\|-\frac{3\gamma}{8}\sum_{t=0}^{qK-1}\mathbb{E}\|\nabla f(w_t)\|+\frac{3\gamma}{2}\sum_{t=0}^{qK-1}\mathbb{E}\|v_t-\nabla f(w_t)\|\nonumber\\
	&\le \frac{T\gamma\epsilon}{8}-\frac{\gamma}{2}\sum_{k=0}^{K-1}\sum_{s=0}^{q-1}\mathbb{E}\|v_{qk+s}\|-\frac{3\gamma}{8}\sum_{k=0}^{K-1}\sum_{s=0}^{q-1}\mathbb{E}\|\nabla f(w_{qk+s})\|+\frac{3\gamma}{2}\sum_{k=0}^{K-1}\sum_{s=0}^{q-1}\mathbb{E}\|\delta_{qk+s}\|\nonumber\\
	&\overset{(i)}{\le} \frac{T\gamma\epsilon}{8}-\frac{\gamma}{2}\sum_{k=0}^{K-1}\sum_{s=0}^{q-1}\mathbb{E}\|v_{qk+s}\|-\frac{3\gamma}{8}\sum_{k=0}^{K-1}\sum_{s=0}^{q-1}\mathbb{E}\|\nabla f(w_{qk+s})\|\nonumber\\
	&\quad+\frac{3q\gamma}{2}\sum_{k=0}^{K-1}\Bigg(\frac{\Lambda}{\sqrt{B}}+\epsilon\sqrt{\frac{q}{B'}}+\Big(\frac{\epsilon}{\sqrt{B'}}+\frac{\Gamma}{\sqrt{B}}\Big)\sum_{u=0}^{q-1}\mathbb{E}\|\nabla f(w_{qk+u})\|\Bigg)\nonumber\\
	&\overset{(ii)}{\le} \frac{T\gamma\epsilon}{4}-\frac{5\gamma}{16}\sum_{k=0}^{K-1}\sum_{s=0}^{q-1}\mathbb{E}\|\nabla f(w_{qk+s})\|,\label{SPIDER_fulldec}
\end{align}
where (i) uses eq. \eqref{vt_err} and (ii) uses the following condition satisfied by the hyperparamter choices $B\ge \max(576\Lambda^2\epsilon^{-2},2304\Gamma^2q^2), B'\ge\max(576q,2304q^2\epsilon^2)$.
\begin{align}
	&\frac{\Lambda}{\sqrt{B}}+\epsilon\sqrt{\frac{q}{B'}}\le \frac{\epsilon}{12}, \quad \frac{\epsilon}{\sqrt{B'}}+\frac{\Gamma}{\sqrt{B}}\le\frac{1}{24q}\nonumber
\end{align}
By rearranging eq. \eqref{SPIDER_fulldec} and using $f(w_T)\ge f^*=\min_{w\in\mathbb{R}^d}f(w)$, we can prove eq. \eqref{df_rate} as follows
\begin{align}
	&\mathbb{E}\|\nabla f(w_{\widetilde{T}})\|=\frac{1}{T}\sum_{t=0}^{T-1}\|\nabla f(w_t)\|\le \frac{16}{5T\gamma}\big(\mathbb{E}f(w_0)-f^*\big)+\frac{4\epsilon}{5}\nonumber
\end{align}

It can be easily verified that the following hyperparameter choices satisfy the condition that $B\ge \max(576\Lambda^2\epsilon^{-2},2304\Gamma^2q^2)$ and that $B'\ge\max(576q,2304q^2\epsilon^2)$ since $\epsilon\in(0,1)$.
\begin{align}
	q&=\epsilon^{-1}=\mathcal{O}(\epsilon^{-1})\label{q}\\
	B&=\max(576\Lambda^2,2304\Gamma^2)\epsilon^{-2}=\mathcal{O}(\epsilon^{-2})\label{S0}\\
	B'&=2304\epsilon^{-1}=\mathcal{O}(\epsilon^{-1})\label{S1}\\
	\gamma&=\frac{\epsilon}{2\overline{K}_0+4\overline{K}_2+2\overline{K}_1(\Lambda^{\alpha}+\Gamma^{\alpha}+1)+1}=\mathcal{O}(\epsilon); \quad \text{if }  \alpha\in(0,1)\label{gamma1}\\
	\gamma&=\frac{\epsilon}{5L_1\sqrt{\Gamma^2+1}+8\sqrt{L_0^2+2L_1^2\Lambda^2}}=\mathcal{O}(\epsilon); \quad \text{if } \alpha=1\label{gamma2}\\
	K&=\frac{16\epsilon}{5T\gamma}\big(\mathbb{E}f(w_0)-f^*\big)=\mathcal{O}(\epsilon^{-1})\label{K}\\
	T&=qK=\frac{16}{5T\gamma}\big(\mathbb{E}f(w_0)-f^*\big)=\mathcal{O}(\epsilon^{-2})\label{T}
\end{align}
Substituting the choice of $T$ given by eq. \eqref{T} into eq. \eqref{vt_rate}, we obtain that $\mathbb{E}\|\nabla f(w_{\widetilde{T}})\|\le\epsilon$. 

Under the above hyperparameter choices, the sample complexity is 
\begin{align}
	\sum_{t=0}^{qK-1}|S_t|=K\big((q-1)B'+B\big)=\mathcal{O}(\epsilon^{-3}).\nonumber
\end{align}

\end{document}